\documentclass[11pt,a4paper]{article}

\usepackage{amsmath}
\usepackage{amsthm}
\usepackage{amssymb}
\usepackage{mathtools}
\usepackage{fullpage}

\usepackage{graphicx}
\usepackage[colorlinks=true, pdfstartview=FitV, linkcolor=blue, citecolor=blue, urlcolor=blue]{hyperref}


\usepackage[capitalise]{cleveref}

\newtheorem{thm}{Theorem}
\newtheorem{proposition}[thm]{Proposition}
\newtheorem{lemma}[thm]{Lemma}
\newtheorem{cor}[thm]{Corollary}

\theoremstyle{remark}


\newcommand{\conj}[1]{\overline{#1}} 
\newcommand{\ip}[2]{\langle #1,#2\rangle} 
\newcommand{\floor}[1]{\left\lfloor #1 \right\rfloor}
\newcommand{\pa}{\partial}
\newcommand{\eps}{\epsilon}

\newcommand{\T}{\mathbb{T}}
\newcommand{\N}{\mathbb{N}}
\newcommand{\R}{\mathbb{R}}

\newcommand{\OO}{\mathcal{O}}

\newcommand{\ee}{\mathrm{e}} 
\newcommand{\ii}{\mathrm{i}} 
\newcommand{\dd}{\mathrm{d}} 
\newcommand{\init}{\mathrm{in}} 
\newcommand{\jp}{(j{+}1)} 
\newcommand{\lp}{(l{+}1)} 

\newcommand{\tnorm}[1]{{\left\vert\kern-0.25ex\left\vert\kern-0.25ex\left\vert #1
    \right\vert\kern-0.25ex\right\vert\kern-0.25ex\right\vert}}
\newcounter{savecntrP7}
\newcounter{restorecntrP7}

\title{Well-posedness of the Prandtl equation \\ without any structural assumption}
\author{Helge Dietert\setcounter{savecntrP7}{\value{footnote}}\thanks{Universit\'e Paris Diderot, Sorbonne Paris Cit\'e, Institut de Math\'ematiques de Jussieu-Paris Rive Gauche (UMR 7586), F-75205 Paris, France}
	\and David G\'erard-Varet\setcounter{restorecntrP7}{\value{footnote}}%
	\setcounter{footnote}{\value{savecntrP7}}\footnotemark
	\setcounter{footnote}{\value{restorecntrP7}}\; \thanks{Institut Universitaire de France, F-75205 Paris, France}
}
\begin{document}
\maketitle

\begin{abstract}
  We show the local in time well-posedness of the Prandtl equation for data
  with Gevrey $2$ regularity in $x$ and $H^1$ regularity in $y$. The
  main novelty of our result is that we do not make any assumption on
  the structure of the initial data: no monotonicity or hypothesis on
  the critical points. Moreover, our general result is optimal in
  terms of regularity, in view of the ill-posedness result of
  \cite{gerard-varet-dormy-2009-ill-posedness}.
\end{abstract}

\section{Introduction}

We are interested in the 2D Prandtl equation
\begin{equation}
  \label{eq:prandtl}
  \partial_t U^P + U^P \partial_x U^P + V \partial_y U^P - \partial_y^2 U^P = \partial_t U^E +U^E \partial_x U^E,
  \quad \partial_x U^P + \partial_y V^P = 0,
\end{equation}
set in the domain $\Omega = \T \times \R_+$, completed with boundary conditions
\begin{equation}
  \label{eq:BC}
  U^P\vert_{y=0} = V^P\vert_{y=0} = 0, \quad \lim_{y \rightarrow +\infty} U^P = U^E.
\end{equation}
This equation is a degenerate Navier-Stokes model, introduced by
Prandtl in 1904 to describe the boundary layer, which is the region of
high velocity gradients that forms near solid boundaries in
incompressible flows at high Reynolds number. It can be derived from
the Navier-Stokes equation under the formal asymptotics
\begin{equation} \label{eq:Prandtl:asym}
  (u^\nu, v^\nu)(t,x,z) \approx (U^P(t,x,z/\sqrt{\nu}), \sqrt{\nu} V^P(t,x,z/\sqrt{\nu})), \quad (U^P,V^P) = (U^P,V^P)(t,x,y),
\end{equation}
where $\nu$ is the inverse Reynolds number, and $(u^\nu, v^\nu)$ is
the Navier-Stokes solution. This asymptotics is supposed to apply to
the flow in the boundary layer region: the typical scale $\sqrt{\nu}$
of the boundary layer in this model is inspired by the heat part of
the Navier-Stokes equation. Away from the boundary, one rather expects
an inviscid asymptotics of the type
$$ (u^\nu, v^\nu)(t,x,z)  = (u^E, v^E)(t,x,z), $$
where $(u^E, v^E)$ is the solution of the Euler equation. In order to
match the two asymptotic expansions, one must impose the condition
$$ \lim_{y \rightarrow +\infty} U^P(t,x,y) = U^E(t,x) := u^E(t,x,0),  $$
which yields the boundary condition for $y \to \infty$ in
\eqref{eq:BC}. The other two boundary conditions at $y = 0$ express
the usual no-slip condition at the boundary.  We refer to \cite{weinan} for
a more detailed derivation.  Let us stress that the pressure in the
Prandtl model is independent of $y$: its value is given by the
pressure in the Euler flow at $z=0$. This explains the right-hand side
of \eqref{eq:prandtl}, which depends only on $t,x$, and is coherent
with the third boundary condition in \eqref{eq:BC}.

The Prandtl system \eqref{eq:prandtl}-\eqref{eq:BC} is very classical,
as it appears in most textbooks on fluid dynamics. Still, it is
well-known from physicists that its range of applications is narrow,
due to underlying instabilities. Among those instabilities, one can
mention the phenomenon of separation, which is related to the
development of a reverse flow in the boundary layer \cite{gargano1,DalMas}. Another
example is the so-called Tollmien-Schlichting wave, that is typical of
viscous flows at high but finite Reynolds number \cite{DrazinReid,GGN2014}. Of course,
such instability mechanisms create difficulties at the PDE level,
making the mathematical analysis of boundary layer theory an
interesting topic. The two main problems that one needs to address are
the well-posedness of the reduced model \eqref{eq:prandtl}, and the
validity of the asymptotics \eqref{eq:Prandtl:asym}. We shall focus on
the former in the present paper. About the validity of boundary layer
expansions in the unsteady setting, there are many possible references, among which \cite{SaCa2,Gre,WangZhang,GeMaMa,MaeMaz,GNg}. About the steady setting, see the recent works \cite{GuoNg,GerMae,GuoIye}.

To analyse the well-posedness of the Prandtl model is uneasy, even at
the level of local in time smooth solutions. The key difference with
Navier-Stokes is that there is no time evolution for the vertical
velocity, which is recovered only through the divergence-free
condition. Hence, the term $v \partial_y u$ can be seen as a first
order nonlinear operator in $x$. Moreover, this operator is not
skew-symmetric in $H^s$. As the diffusion in \eqref{eq:prandtl} is
only transverse, this prevents the derivation of standard Sobolev
estimates.  The first rigorous study of the Prandtl equation goes back
to Oleinik \cite{Ole}, who tackled the case of data $U\vert_{t=0}$ that
are monotonic in $y$. She established local well-posedness of the
system using the so-called Crocco transform, a tricky change of
variables and unknowns. Let us stress that such monotonicity
assumption excludes the phenomenon of reverse flow and therefore
prevents boundary layer separation. More recently, the local
well-posedness result of Oleinik was revisited using the standard
Eulerian form of the equation, see \cite{AlWaXuTa,masmoudi-wong-2015-local-in-time-prandtl,KuMaViWo} for the local theory in
Sobolev spaces.

The analysis of non-monotonic data is much more recent, and has
experienced some strong impetus over the last years. Surprisingly, it
was shown in \cite{gerard-varet-dormy-2009-ill-posedness} that the Prandtl system is ill-posed in the
Sobolev setting ({\it cf.} \cite{GerNg,GuoNg2,LiuYang} for improvements). Specifically, paper \cite{gerard-varet-dormy-2009-ill-posedness} centers on the linearization of \eqref{eq:prandtl}-\eqref{eq:BC} around shear flows,
given by $(U,V) = (U_s(y),0)$. The linearized system reads
\begin{equation} \label{eq:lin:prandtl}
  \begin{aligned}
    \partial_t u + U_s \partial_x u + U'_s v - \partial^2_y u = 0, \\
    \partial_x u + \partial_y v = 0, \\
    u\vert_{y=0} = v\vert_{y=0} = 0, \quad \lim_{y \rightarrow +\infty} u = 0.
  \end{aligned}
\end{equation}
In the case where $U_s$ has one non-degenerate critical point, one can
show that \eqref{eq:lin:prandtl} has unstable solutions of the form
$u(t,x,y) = \ee^{\ii k x} \ee^{\sigma_k t} u_k(y)$ for $k$ arbitrarily
large and $\Re \sigma_k \sim \lambda \sqrt{k}$. Such high frequency
instability forbids the construction of Sobolev solutions.  To obtain
positive results, one must start from initial data $u_\init$ that are
strongly localized in Fourier, typically for which
$|\hat{u}_0(k,y)| \lesssim \ee^{-\delta |k|^\gamma}$ for some positive
$\delta > 0$, $\gamma \le 1$. Such localization condition corresponds
to Gevrey regularity in $x$ (Gevrey class $1/\gamma$).  The first
result in this direction is due to Sammartino and Caflisch
\cite{sammartino-calfisch-1998-boundary-layer-analytic}, who
established existence of local in time solutions in the analytic
setting ($\gamma = 1$). See also the nice paper \cite{KuVi}. Note that the
requirement for analyticity is natural in view of standard
estimates. For instance, at the level of the linearized equation
\eqref{eq:lin:prandtl}, one gets directly by testing against $u$ that
$$ \partial_t \|\hat{u}(t,k,\cdot)\|_{L^2_y} \le  C |k|  \, \|\hat{u}(t,k,\cdot)\|_{L^2_y}  $$
so that
$\|\hat{u}(t,k,\cdot)\|_{L^2_y} \le \ee^{C|k|t} \|\hat{u}_0(k,
\cdot)\|_{L^2_y}$.  Hence, if
$\|\hat{u} _0(k,\cdot)\|_{L^2_y} \lesssim \ee^{-\delta |k| }$, a
uniform control will be provided as long as $t \le \delta/C$.

To relax the analyticity condition is much harder. In the special case
where $u_\init$ \emph{has for each value of $x$ a single non-degenerate
  critical point in $y$}, the first author and N. Masmoudi proved the
local well-posedness of system \eqref{eq:prandtl}-\eqref{eq:BC} for
data that are in Gevrey class $7/4$ with respect to
$x$ \cite{GeMa}. Well-posedness was extended to Gevrey class $2$ in article
\cite{LiYang}, for data that are small perturbations of a shear flow with a
single non-degenerate critical point. Note that this exponent
(corresponding to $\gamma = 1/2$) is optimal in view of the
instability mechanism of \cite{gerard-varet-dormy-2009-ill-posedness}.

All the recent results mentioned above rely heavily on the structure
of the initial data: monotonicity for the Sobolev setting, single
non-degenerate critical points for the Gevrey setting. It is therefore
natural to ask about the optimal regularity under which local
well-posedness of the Prandtl equation holds, without additional
structural assumption. This is the problem that we solve in the
present paper: we establish the short-time well-posedness of the
Prandtl equation for general data with Gevrey $2$ regularity in $x$
and $H^1$ regularity in $y$. We recall once more that such regularity
framework is the best possible. Indeed, from the results of \cite{gerard-varet-dormy-2009-ill-posedness},
high frequency modes $k$ in $x$ may experience exponential growth with
rate $\sqrt{k}$. This means that to hope for short time stability, the
amplitude of these modes should be $O(\ee^{-C\sqrt{k}})$, which is the
Fourier translation of a Gevrey 2 requirement.

\section{Result}

Let $\gamma \ge 1$, $\tau > 0$, $r \in \R$. For functions $f = f(x)$
of one variable, we define the Gevrey norm
\begin{equation}\label{eq:Gevrey:1d}
  |f|_{\gamma,\tau, r}^2 =  \sum_{j \in \N}
  \left( \frac{\tau^{j+1} \jp^r}{(j!)^{\gamma}} \right)^2
  \| f^{(j)} \|^2_{L^2(\T)}
\end{equation}
and for  functions $f = f(x,y)$ of two variables, the norm
\begin{equation} \label{eq:Gevrey:2d}
  \| f \|_{\gamma,\tau, r}^2 = \sum_{j \in \N}
  \left( \frac{\tau^{j+1} \jp^r}{(j!)^{\gamma}} \right)^2
  \| \partial_x^j  f \|^2_j,
\end{equation}
where $\|\cdot\|_j$, $j \ge 0$, denotes a family of weighted $L^2$
norms. Namely,
\begin{equation} \label{norm:rho}
  \| f \|_j^2 =   \int_{\T\times\R^+} |f(x,y)|^2 \rho_j(y)\, \dd x\, \dd y,
\end{equation}
where $\rho_j$, $j \ge 0$, is the family of weights given by
\begin{equation*}
  \rho_0(y) = (1+y)^{2m}, \quad
  \rho_j(y) = \frac{\rho_{j-1}(y)}{\left(1+\frac{y}{j^\alpha}\right)^2}
    = \rho_0(y) \prod_{k=1}^j \left(1 + \frac{y}{k^{\alpha}}\right)^{-2}, \quad j  \ge 1,
\end{equation*}
for fixed constants $\alpha \ge 0$ and $m \ge 0$ chosen later ($m$
large enough and $\alpha$ matching the constraints found from the
estimates). The need for this family of weights will be clarified later. Let us note that {\em locally in $y$}, this family of norms is comparable to more classical families such as
\begin{equation}
  ||| f |||_{\gamma,\tau, r}^2 = \sum_{j \in \N}  \left( \frac{\tau^{j+1} \jp^r}{(j!)^{\gamma}} \right)^2  \| \partial_x^j  f \|^2_{L^2}.
\end{equation}
For instance, for functions $f$ which are zero for $|y|\ge M$, one has
$$  \| f \|_{\gamma, \tau,r} \le C_M ||| f |||_{\gamma,\tau,r}, \quad ||| f |||_{\gamma,\tau,r} \le C_{M,\tau'} \| f \|_{\gamma,\tau',r} \text{ for any $\tau' > \tau$}. $$
The only difference is when $y$ goes to infinity, where the family of weights $\rho_j$ puts less constraints on the decay of the derivatives  compared to a fixed weight $\rho_0$ for derivatives of any order.

With these spaces, we can now state our main result.
\begin{thm} \label{thm:main}
  There exists $m$ and $\alpha$ such that: for all
  $0 < \tau_1 < \tau_0$, $r \in \R$, for all $T_0 > 0$, for all $U^E$ satisfying
  \begin{equation*}
    \sup_{[0,T_0]} \,  |\pa_t U^E|_{2,\tau_0,r} + |U^E|_{2,\tau_0,r} < +\infty,\quad
    \sup_{[0,T_0]}   \max_{l=0,\dots,3} \| \partial_t^l(\partial_t + U^E\partial_x) U^E
    \|_{H^{6-2l}(\T)} < +\infty
  \end{equation*}
  for all  $U^P_\init$ satisfying
  $$ \| U^P_\init-U^E\vert_{t=0} \|_{2,\tau_0,r} < +\infty,
  \quad \|(1+y)\pa_y U^P_\init \|_{2,\tau_0,r} < +\infty,
  \quad \|(1+y)^{m+6}\pa_y U^P_\init \|_{H^6(\T \times \R_+)} < +\infty $$
  and under usual compatibility conditions (see the last remark below), there exists
  $0 < T \le T_0$ and a unique solution $U^P$ of
  \eqref{eq:prandtl}-\eqref{eq:BC} over $(0,T)$ with initial data $U^P_\init$ that
  satisfies
  \begin{equation*}
    \sup_{t \in [0,T]} \|U^P(t)-U^E(t) \|^2_{2,\tau_1,r}
    +  \sup_{t \in [0,T]} \|(1+y)\pa_y U^P(t) \|^2_{2,\tau_1,r}
    + \int_0^T  \|(1+y)\pa^2_y U^P(t) \|^2_{2,\tau_1,r} \, \dd t < +\infty
  \end{equation*}
\end{thm}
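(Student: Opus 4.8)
The plan is to run a Cauchy–Kovalevskaya–type scheme in the scale of Gevrey-$2$ spaces $\|\cdot\|_{2,\tau,r}$, where the analyticity/Gevrey radius $\tau$ is allowed to shrink in time to absorb the loss of one $x$-derivative coming from $V^P\partial_y U^P$. Concretely, I would set $u = U^P - U^E$, rewrite \eqref{eq:prandtl} as an evolution equation for $u$ with the quasilinear transport term $V^P\partial_y u$ (with $V^P(t,x,y) = -\int_0^y \partial_x U^P(t,x,y')\,\dd y'$) and the diffusion $-\partial_y^2 u$, and look for a solution on a time interval $[0,T]$ with $\tau(t) = \tau_0 - \beta t$ decreasing linearly, $T$ small enough that $\tau(T) \ge \tau_1 > 0$. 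The heart of the matter is an a priori energy estimate: differentiating $\partial_x^j u$ in time, pairing against $\partial_x^j u$ with the weight $\rho_j$, and summing the Gevrey-weighted series \eqref{eq:Gevrey:2d}, one wants to show that the dangerous top-order contribution of $V^P \partial_y(\partial_x^j u)$ is compensated by (a) the gain from $\dot\tau < 0$, which produces a term $\sim \dot\tau \sum j (\cdots)^2 \|\partial_x^j u\|_j^2$, i.e. a negative contribution that kills one power of $j$ — exactly what is needed since the commutator with $\partial_x^j$ costs one derivative — and (b) the transverse dissipation $\int \rho_j |\partial_y \partial_x^j u|^2$, which controls the part of $V^P\partial_y(\partial_x^j u)$ that cannot be integrated by parts in $x$. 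The role of the $y$-dependent weights $\rho_j$, decreasing in $j$, is precisely to make the commutator $[\,\rho_j\,, \text{multiplication by }V^P\,]$-type terms and the terms where $\partial_y$ falls on the weight have a favorable sign or a controllable size; this is why $\alpha$ and $m$ must be tuned rather than taken arbitrary.

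The steps, in order, would be: (1) Local theory for the linearized/regularized problem — for fixed smooth $V$, the equation $\partial_t u + U^E\partial_x u + (\text{lin. terms}) + V\partial_y u - \partial_y^2 u = F$ is a linear parabolic equation in $y$ with a transport term in $x$, for which existence/uniqueness in the weighted $H^k_y$ spaces is standard; here is where the hypotheses $\|(1+y)^{m+6}\partial_y U^P_\init\|_{H^6}$, the $H^{6-2l}$ bounds on $U^E$, and the compatibility conditions are consumed, to get a baseline high-regularity-in-$(x,y)$ solution that legitimizes all the integrations by parts. (2) The main a priori estimate in the Gevrey norm, as sketched above, yielding a differential inequality of the schematic form
\begin{equation*}
  \frac{\dd}{\dd t}\Big( \|u\|_{2,\tau,r}^2 + \|(1+y)\partial_y u\|_{2,\tau,r}^2 \Big)
  + c\,\|(1+y)\partial_y^2 u\|_{2,\tau,r}^2
  \le \big( \dot\tau + C\,\mathcal{E}(t)^{1/2} \big)\,\mathcal{D}(t) + C\,\mathcal{E}(t)\big(1+\mathcal{E}(t)\big),
\end{equation*}
where $\mathcal{E}$ is the energy, $\mathcal{D} \sim \sum_j j\,(\cdots)^2(\|\partial_x^j u\|_j^2 + \dots)$ is the "derivative-gaining" quantity, and one chooses $\beta = |\dot\tau|$ large compared to $C\mathcal{E}^{1/2}$ so the bracket is negative as long as $\mathcal{E}$ stays bounded; a continuity/bootstrap argument then gives a uniform bound on $[0,T]$ for $T$ small. (3) Propagation of the $\partial_y$ and $(1+y)\partial_y^2$ norms: apply $\partial_y$ (resp. work at the level of the vorticity $\omega = \partial_y u$) and redo the same Gevrey energy estimate, now using that $\partial_y V^P = -\partial_x U^P$ so the worst term $(\partial_y V^P)\partial_y u = -(\partial_x U^P)\,\omega$ is again a one-$x$-derivative term absorbed by the same $\dot\tau$-gain; the weight $(1+y)$ is handled because commuting it past $\partial_y^2$ produces only lower-order terms controlled by the energy. (4) Uniqueness and continuous dependence: take two solutions, estimate the difference in a slightly smaller Gevrey space ($\tau$ a bit smaller, $r$ unchanged), where the difference of the quadratic terms is linear in the difference times a bounded factor — the same structure as the a priori estimate but at one lower level, so it closes with a Grönwall argument.

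The main obstacle — and where essentially all the work lies — is step (2)/(3): controlling the transport term $V^P\partial_y u$ in the Gevrey series. The issue is that $V^P = -\int_0^y \partial_x U^P$ involves an $x$-derivative, so $\partial_x^j(V^P\partial_y u)$ contains the term $(\partial_x^{j+1}\!\!\int_0^y U^P)\,\partial_y u$ which naively sits one level above the energy. There is no integration-by-parts in $x$ to save it (the operator is not skew-symmetric, as emphasized in the introduction), and there is no $y$-derivative on it to feed into the dissipation either. The resolution must combine three ingredients whose precise interplay is delicate: the loss of $\tau$ (giving a factor $j$ from $\dot\tau$ after dividing by the previous term in the geometric-type series, which matches the single lost derivative — this is why Gevrey $2$, i.e. $(j!)^2$, is exactly the borderline weight for a $\sqrt{j}$-type growth), the transverse dissipation (to absorb the "non-$x$-integrable" remainder via $\|V^P\|_{L^\infty}\|\partial_y u\|$-style estimates paired with $\|\partial_y^2 u\|$), and a careful bookkeeping of how the $j$-dependent weights $\rho_j$ interact with the $\int_0^y$ in $V^P$ — integrating $\partial_x U^P$ from $0$ to $y$ against a weight $\rho_j$ that decays like $\prod(1+y/k^\alpha)^{-2}$ is what forces the constraint relating $\alpha$ to the Gevrey exponent $2$. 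Getting the constants to close simultaneously for $u$, for $(1+y)\partial_y u$, and with the dissipative gain for $(1+y)\partial_y^2 u$ is the technical core of the paper.
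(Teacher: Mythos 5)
The proposal has a genuine gap at the crux of the argument: the a priori estimate you describe in step (2) is the \emph{naive} energy estimate, obtained by pairing the equation for $\partial_x^j u$ (with weight $\rho_j$) against $\partial_x^j u$ itself, and it provably does \emph{not} close at Gevrey $2$. To see the obstruction, note that the dangerous term is $v_j\,\partial_y U^P$ with $v_j = -\int_0^y\partial_x u_j$, and $\partial_x u_j \sim (j{+}1)^\gamma\tau^{-1}\,u_{j+1}$. So this term contributes $\sim j^{\gamma}\tau^{-1}\|u_{j+1}\|_j\|u_j\|_j$ to the energy balance, whereas the shrinking-radius gain is only $\beta\,(j{+}1)\|u_j\|_j^2$, linear in $j$. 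After a Young splitting, absorption requires $j^{2\gamma-2} \lesssim \beta^2$, i.e.\ $\gamma \le 1$. That is exactly the analytic obstruction explained in the paper's introduction: the standard estimate on the resolvent gives $\Re\lambda\sim k$ and a semigroup bound $\ee^{Ckt}$, compatible with analyticity only. The transverse dissipation $\|\partial_y u_j\|_j^2$ that you invoke cannot be brought to bear on $v_j\,\partial_y U^P$ either, because there is no $\partial_y$ acting on $u_j$ in that term. So of your ``three ingredients,'' the first ($\dot\tau$-gain) falls short by a factor $j^{\gamma-1}$, the second (dissipation) does not reach the bad term, and the third (weight bookkeeping) only fixes integrability in $y$, not the derivative count in $x$.

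What you are missing is the central structural idea of the paper: an auxiliary unknown $H_j$ defined by the \emph{forward} parabolic problem \eqref{eq:def-h-j} (the discrete analogue of writing the stream function as $\Psi = (\lambda + \ii k U_s - \partial_y^2)\psi$ in the Fourier--Laplace picture), paired with a \emph{backward} adjoint test function $\phi_j$ solving \eqref{eq:def-phi-j}. Testing \eqref{eq:u-j} against $\phi_j$ makes the leading part of $\partial_y U^P\,v_j + j\,\partial_{xy}U^P\,\partial_x^{-1}v_j$ cancel exactly against the $\ip{u_j}{H_j}_j$ contribution, and the surviving commutator errors $E_3, E_4$ carry an extra small factor $\|\phi_j\|_j \lesssim (\beta\jp)^{-1}\|H_j\|_j$ from \cref{thm:control-phi-by-h}. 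This is precisely what replaces the naive $j^\gamma$ cost by a $j^{\gamma-1}/\beta$ cost, and lets the estimate close for all $\gamma \le 2$ (see the absorption condition at the end of the proof of \cref{thm:control-h-f}). One then has to go back from $H_j$ to $u_j$, $\omega_j$ (\cref{thm:control-u-by-h-l2,thm:control-u-by-h-linf,thm:vorticity-control}), estimate the nonlinear remainders $F_j$ in \cref{thm:fj-u-gevrey}, and control the low norm $\|(U^P,V^P)\|_{low}$ feeding the lower bound on $\beta$ (\cref{lemma:low}, \cref{thm:complete-apriori}), before the regularization and compactness argument of \cref{sec:exist_unique}. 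Steps (1), (3), (4) of your outline match the paper in spirit, but without the $H_j$/$\phi_j$ mechanism the scheme cannot get past analyticity, which is the whole point of the theorem.
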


\textbf{Remarks.}
\begin{itemize}
\item The main novelty of the theorem is that we reach the optimal
  Gevrey regularity although no structural assumption is made on the
  data: no monotonicity, or hypothesis on the number and order of the
  critical points is needed. Only Gevrey regularity of the data and
  natural compatiblity conditions are required.
\item Our method of proof, explained below, is inspired by the
  hyperbolic part of the Prandtl equation. It is based on both a
  tricky change of unknown and appropriate choice of test
  function. This method would also allow to recover the Sobolev
  well-posedness of the hyperbolic version of the Prandtl system {\em
    by means of energy methods}. As far as we know, the well-posedness
  of this inviscid Prandtl equation had been only established in $C^k$
  spaces using the method of characteristics: see \cite{HH03} for
  more. This part will be detailed elsewhere. In the case of the usual
  Prandtl equation studied here, our methodology has to be slightly
  modified to handle in an optimal way the diffusion term. Still,
  commutators are responsible for the loss of Sobolev regularity: only
  Gevrey $2$ smoothness in $x$ can be established.
  \item There is a loss on the Gevrey radius $\tau$ of the solutions
  through time, going from $\tau_0$ to $\tau_1$. This loss, which
  appears technical in the paper, is actually unavoidable. This is due
  to the instabilities described in \cite{gerard-varet-dormy-2009-ill-posedness}: exponential growth of
  perturbations at rate $\sqrt{k}$ causes a decay of the Gevrey radius
  linearly with time.
\item Besides the regularity requirements mentioned in Theorem
  \ref{thm:main}, the initial data must satisfy {\em compatibility
    conditions}. It is typical of parabolic problems in domains with
  boundaries, {\it cf.} \cite[Chapter 3]{Metivier} for a general
  discussion. Here, the value of $U^P_\init$ and of some of its
  derivatives at $y=0$ cannot be arbitrary: they must be related to
  $U^E$ accordingly to the equation and to the amount of regularity
  asked for $u$ (with respect to the $y$-variable). Let us note that
  locally near $y=0$, most of our estimates only involve $U^P-U^E$ in
  $L^2_t H^2_y$ (not mentioning the Gevrey regularity in $x$). Such
  estimates could be carried with the single compatibility condition
  $U^P_\init\vert_{y=0}=0$. Still, the low norm $\| (U^P, V^P) \|_{low}$
  introduced in \eqref{def:low} involves more $y$-derivatives: its
  control through \cref{lemma:low} implies therefore a few more
  compatibility conditions. For the sake of brevity, we do not provide
  their explicit expressions, and refer to \cite[Proposition
  2.3]{XuZhang} for a detailed discussion on a variation of the
  Prandtl equation.

\end{itemize}

\textbf{Outline of the strategy.}
As mentioned earlier, our analysis of the Prandtl equation relies on
the identification of new controlled quantities because the usual
unknown $u$ and kinetic energy do not give enough information.  To
help to identify the relevant quantities, it is a good idea to start
from the study of the linearized system \eqref{eq:lin:prandtl}. After
Fourier transform in $x$ and Laplace transform in time, we are left
with the ODE
\begin{equation} \label{OS}
  (\lambda + \ii k U_s) \partial_y \Psi - \ii k U'_s  \Psi  - \partial^3_y \Psi =  u_\init
\end{equation}
where $\Psi$ corresponds to the Fourier-Laplace transform of the
stream function. At high frequencies $k$, a natural idea (although not
legitimate in the end) is to neglect the diffusion term. We are then
left with the first order ODE
\begin{equation}  \label{Ray}
  (\lambda + \ii k U_s) \partial_y \Psi - \ii k U'_s  \Psi  =  u_\init.
\end{equation}
We note that the standard estimate (based on taking
$\partial_y \overline{\Psi}$ as a test function) yields a control of
the type
\begin{align*}
  \Re\lambda\, \|\partial_y \Psi \|_{L^2}^2
  &\lesssim k \|\partial_y \Psi \|_{L^2} \|U'_s \Psi \|_{L^2} + \|u_\init \|_{L^2} \|\partial_y \Psi \|_{L^2} \\
  &\lesssim k  \|\partial_y \Psi \|_{L^2}^2 + \|u_\init \|_{L^2} \|\partial_y \Psi \|_{L^2}
\end{align*}
where the last line comes from the Hardy inequality (as soon as
$|U'_s(y)| = \OO(y^{-1})$ at infinity). Such bound ensures the
solvability of the resolvent equation \eqref{Ray} only for
$\lambda \sim k$. This in turn yields a semigroup bound of the type
$\ee^{C kt}$, only compatible with stability in the analytic setting.

To reach stability in lower regularity, an important point is to
notice that the homogeneous equation has
$\Psi_s = (\lambda + i k U_s)$ as a special solution. With the
integrating factor method in mind, it is then natural to set
$\Psi = (\lambda + i k U_s) \psi$. The first order equation
\eqref{Ray} becomes
$$ (\lambda + i k U_s)^2 \partial_y \psi =  u_\init $$
which is much better than the original formulation. Indeed, we can
test the equation against
$\phi = \frac{1}{\lambda + i k U_s} \partial_y \conj{\psi}$ to
obtain a control of $\partial_y \psi$ in terms of $u_\init$, and from
there a control of $\Psi$ for any $\lambda > 0$.

Back to the full resolvent equation \eqref{OS} we find for the same unknown $\psi$
\begin{equation*}
  (\lambda + \ii k U_s)^2 \partial_y \psi
  - (\lambda + \ii k U_s) \partial_y^3 \psi
  = u_\init + [\lambda+\ii k U_s,\partial_y^3] \psi.
\end{equation*}
Testing again against
$\phi = \frac{1}{\lambda + i k U_s} \partial_y \overline{\psi}$, the
LHS allows the control
\begin{equation*}
  \Re \lambda\, \| \partial_y \psi \|_{L^2}^2
  + \| \partial_y^2 \psi \|_{L^2}^2.
\end{equation*}
In the commutator at the RHS,  the worst error term is
$3\ii k \partial_y U_s \partial_y^2 \psi$, which is bounded as
\begin{equation*}
  C \frac{k}{|\Re \lambda|}
  \| \partial_y \psi \|_{L^2}
  \| \partial_y^2 \psi \|_{L^2}
  \le \frac 12 \| \partial_y^2 \psi \|_{L^2}^2
  + \frac{C^2k^2}{|\Re \lambda|^2}
  \| \partial_y \psi \|_{L^2}^2.
\end{equation*}
We see that under the constraint $\Re \lambda \sim k^{2/3}$, the
estimate can be closed, and this can be shown to imply short time
stability for data with Gevrey regularity $3/2$. This estimate around
a shear flow is detailed as Lemma 4.1 in \cite{dalibard-dietert-gerard-marbach-2018-ibl}.

In order to reach the optimal Gevrey exponent $2$, we need to get rid
of the commutator term containing $\partial_y^2 \psi$, which comes
with a worse control than $\partial_y \psi$. To do so, we change a bit
our new unknown $\psi$: we now define $\psi$ through the relation
\begin{equation} \label{def:auxiliary1}
  \Psi = (\lambda+\ii k U_s - \partial_y^2) \psi
\end{equation}
including the diffusion term. Hence,  \eqref{OS} becomes
\begin{equation*}
  (\lambda + \ii k U_s - \partial_y^2)^2 \partial_y \psi
  + (\lambda + \ii k U_s - \partial_y^2)
  (\ii k U_s' \psi)
  - \ii k U_s
  (\lambda + \ii k U_s - \partial_y^2) \psi = u_\init.
\end{equation*}
Testing this time against the solution $\phi$ of
$(\lambda+\ii k U_s - \partial_y^2) \phi = \partial_y \conj{\psi}$
(again with the diffusion term), the LHS yields the same control, but
the error term is now
\begin{equation*}
  \Re \int [\lambda+\ii k U_s - \partial_y^2, \ii k U_s']
  \psi\; \phi.
\end{equation*}
From the definition of $\phi$ it can be shown that
$\| \phi \| \lesssim \lambda^{-1} \| \partial_y \psi \|$ so that the error can be bounded by
\begin{equation*}
  \frac{k}{|\Re\lambda|} \| \partial_y \psi \|_{L^2}^2.
\end{equation*}
The estimate can now be closed for $\Re\lambda \sim k^{1/2}$ yielding
Gevrey regularity $2$.

Obviously, such approach is no longer applicable as such to the
nonlinear system \eqref{eq:prandtl}-\eqref{eq:BC}: we not only lose
the linearity of the equations, but the coefficients are no longer of
shear flow type. They notably depend on $t$ and $x$, which forbids an
easy use of Fourier or Laplace transform.  Rather than turning to the
characterization of Gevrey spaces in the Fourier variable $k$, we
consider norms based on the $x$-variable, see \eqref{eq:Gevrey:1d} and
\eqref{eq:Gevrey:2d}.  Roughly, the idea is to work with time
dependent norms, that is with the quantities
$$ \|(U^P - U^E)(t) \|_{\gamma,\tau(t),r}, \quad \tau(t) = \tau_0 \ee^{-\beta t}. $$
By differentiating $j$-times the Prandtl equation, we can derive an
equation on
\begin{equation*}
  u_j(t) \: := \:
  \frac{\tau(t)^{j+1} \jp^r}{(j!)^{\gamma}}
  \partial_x^j \left( U^P(t) - U^E(t) \right)
\end{equation*}
that can be written as
\begin{equation} \label{eq:linear:uj}
  (\partial_t + \beta (j+1) ) u_j + U^P \partial_x u_j
  + V^P \partial_y u_j  + v_j \partial_y U^P -\partial_y^2 u_j = F_j,
  \quad v_j = - \int_0^y \partial_x u_j.
\end{equation}

Roughly, inspired by the shear flow case, the idea will be to introduce
as a new unknown  the solution $\psi_j = \int_0^y H_j$ of
\begin{equation*}
  (\partial_t + \beta \jp + U^P \partial_x - \partial_y^2)
  \psi_j = \int_0^y u_j\, \dd z.
\end{equation*}
which is reminiscent of the Fourier relation \eqref{def:auxiliary1}. The test function $\phi_j$ should then solve the reverse equation
\begin{equation*}
  (-\partial_t + \beta \jp - U^P \partial_x - \partial_y^2)
  \phi_j = \partial_y \psi_j
\end{equation*}
and be solved backward in time. Performing the same estimate as in
the shear flow case, we expect to find an inequality of the type
\begin{equation*}
  \beta \jp \| \partial_y \psi_j \|^2
  + \| \partial_y^2 \psi_j \|^2
  \lesssim
  \frac{1}{\beta^3 \jp^3} \| F_j \|^2
  + \frac{1}{\beta^3 \jp^3} \| \partial_x \partial_y \psi_j \|^2
\end{equation*}
By exploiting a relation of the form
$\|\partial_x \partial_y \psi_j\| \sim j^\gamma \| \partial_y
\psi_{j+1}\|$ (that needs to be shown!) and using that
$\gamma \le 2$, we will then be able to sum over $j$ and establish for
large enough $\beta$ a control of $\sum_j \| \partial_y \psi_j\|^2$ in
terms of $\sum_j \| F_j \|^2$.

In fact, in implementing this strategy, several refinements are
necessary, and the relations satisfied by $\psi_j = \int_0^y H_j$ or $\phi_j$ need to be slightly modified. Particularly problematic is the term $V^P \partial_y$
because $V^P \sim -\partial_x U^E y$ increases linearly with
$y$: this prevents from closing an energy estimate with a fixed weigth
$\rho = \rho(y)$. This difficulty appears in various places in the
literature on the Prandtl equation. This is for instance the reason
why article \cite{GeMa} is limited to the special case $U^E = 0$ and
decaying initial data. One can also mention \cite{KuVi}, where this
difficulty is overcome by a clever change of variables, which is
reminiscent of the method of characteristics and allows to remove the
bad part of the convection term from the equation. Energy estimates
can then be established in these new coordinates $x',y'$, and yield
some local well-posedness result, with solutions that are analytic in
$x'$ and $L^2$ in $y'$. The disadvantage of this approach is that the
regularity of the solution in the original variables $x$ and $y$ is no
longer clear at positive times. Here, we stick to the eulerian variables, but overcome
the difficulty by introducing the family of weights $\rho_j$,
$j \ge 0$. These weights allow to trade a power of $y$ against a
derivative in $x$, which is appropriate to the commutator
terms. Moreover, they put very little conditions on the derivatives of
the solution, so that they provide a very general framework for
well-posedness. Note that the specific expression of $\rho_j$ is
important: it could not be for instance replaced by the more natural
guess $(1+y)^{2(m-j)}$, as commutators with the diffusion term would
not be under control. Note also that the strategy used in \cite{masmoudi-wong-2015-local-in-time-prandtl},
where Sobolev well-posedness is established under monotonicity
assumptions by increasing the weight with the number of
$y$-derivatives, does not extend to the Gevrey framework in variable
$x$.

The plan of the paper is as follows. In the next section, we first
collect several properties of the weight $\rho_j$.  We then write the
equations satisfied by the $x$-derivatives of the Prandtl solution in
a form analogue to \eqref{eq:linear:uj}. This means that we put most
of the nonlinear terms at the right-hand side, and consider those
equations as linear. We finish the section by introducing the adapted
quantities $H_j$ and $\phi_j$.  The main section is Section
\ref{sec:linear}: {\it a priori} Gevrey estimates for the linear equations are
perfomed, that provide a control of the $u_j$'s in terms of the
nonlinear terms $F_j's$. Note that such estimates are obtained under a
condition of the form $\beta > C (1+ \|(U^P, V^P) \|_{low})^2$, where
$\|(U^P, V^P)\|_{low}$ is a low regularity norm of the solution.  The
treatment of the nonlinearity $F_j$ is then handled in Section
\ref{sec:nonlinear}.  The last step in the derivation of \textit{a
  priori} estimates is to recover the control of the low regularity
norm $\|(U^P,V^P)\|_{low}$, see \cref{sec:low-norm}. Finally, issues
regarding the construction and uniqueness of solutions are discussed in
Section \ref{sec:exist_unique}.

\section{Preliminaries} \label{sec:prelim}

The explicit form of the weights $\rho_j$ is only needed in the
Section \ref{sec:nonlinear}. In the other parts, we just need a
sufficient control of the logarithmic derivative
(\cref{thm:logarithmic-rho-bound}), a bound for antiderivatives
(\cref{thm:poincare-y-rho}) and relate $\rho_j$ to $\rho_{j+1}$
(\cref{thm:rho-j-j-1}).

\begin{lemma}
  \label{thm:logarithmic-rho-bound}
  Let $m \ge 0$ and $\alpha \ge 0$. There exists a constant
  $C_l$ such that for all $y \in \R^+$, $j \in \N$
  \begin{equation*}
    \left| \frac{\partial_y \rho_j(y)}{\rho_j(y)} \right|
    \le
    \begin{cases}
      C_l \jp^{1-\alpha} & \text{if } \alpha < 1 \\
      C_l \log\jp    & \text{if } \alpha = 1 \\
      C_l    & \text{if } \alpha > 1
    \end{cases}
  \end{equation*}
  and
  \begin{equation*}
    (1+y) \left| \frac{\partial_y \rho_j(y)}{\rho_j(y)} \right|
    \le C_l \, \jp.
  \end{equation*}
\end{lemma}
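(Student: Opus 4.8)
The plan is to differentiate $\log\rho_j$ explicitly and then reduce everything to an elementary estimate on $\sum_{k=1}^j k^{-\alpha}$. Taking logarithms in the product formula for $\rho_j$ gives $\log\rho_j(y) = 2m\log(1+y) - 2\sum_{k=1}^j \log\bigl(1+y/k^\alpha\bigr)$, hence
\begin{equation*}
  \frac{\partial_y\rho_j(y)}{\rho_j(y)} = \frac{2m}{1+y} - 2\sum_{k=1}^j \frac{1}{k^\alpha + y},
\end{equation*}
the sum being empty when $j=0$. The first term is nonnegative and each summand of the second is nonnegative, so I bound the absolute value by the sum of the two pieces and then simply drop $y$ from the denominators:
\begin{equation*}
  \left|\frac{\partial_y\rho_j(y)}{\rho_j(y)}\right| \le \frac{2m}{1+y} + 2\sum_{k=1}^j \frac{1}{k^\alpha+y} \le 2m + 2\sum_{k=1}^j \frac{1}{k^\alpha},
\end{equation*}
which is already uniform in $y \in \R^+$.

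The three regimes of the first estimate now follow from the standard integral comparison for the partial sum $\sum_{k=1}^j k^{-\alpha}$: it is bounded by $\zeta(\alpha)$ when $\alpha>1$, by $1+\log j$ when $\alpha=1$, and by $1 + (1-\alpha)^{-1}j^{1-\alpha}$ when $\alpha<1$. Comparing these with $1$, $\log\jp$ and $\jp^{1-\alpha}$ respectively (the case $j=0$ reducing directly to $|\partial_y\rho_0/\rho_0|\le 2m$) produces the asserted bound with a constant $C_l$ depending only on $m$ and $\alpha$.

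For the weighted estimate I would \emph{not} discard the $y$ in the denominator but instead use that $k\ge 1$ together with $\alpha\ge 0$ forces $k^\alpha\ge 1$, so that $\frac{1+y}{k^\alpha+y}\le 1$ for every $k\ge1$ and every $y\ge0$. Multiplying the displayed identity by $(1+y)$ then gives
\begin{equation*}
  (1+y)\left|\frac{\partial_y\rho_j(y)}{\rho_j(y)}\right| \le 2m + 2\sum_{k=1}^j \frac{1+y}{k^\alpha+y} \le 2m + 2j \le (2m+2)\,\jp.
\end{equation*}
There is no real obstacle in this lemma; the only point requiring a little care is keeping the constant independent of $y$, which in the unweighted estimate is handled by throwing $y$ away and in the weighted one by the inequality $\frac{1+y}{k^\alpha+y}\le 1$.
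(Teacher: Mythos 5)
Your proof is correct and follows exactly the paper's route: compute the logarithmic derivative of $\rho_j$ explicitly, then estimate the resulting sum $\sum_{k\le j} (k^\alpha+y)^{-1}$ uniformly in $y$. The paper stops at the identity and asserts the bounds are immediate; you merely fill in the routine comparisons (integral test for $\sum k^{-\alpha}$ and the inequality $\frac{1+y}{k^\alpha+y}\le1$ from $k^\alpha\ge1$), all of which are correct.
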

\begin{proof}
  Given the explicit form of $\rho_j$, we can compute the logarithmic
  derivative of $\rho$ directly as
  \begin{equation*}
    \frac{\partial_y \rho_j}{\rho_j}
    = \partial_y \log \rho_j
    = \partial_y \log \rho_0 - 2 \sum_{k=1}^{j}
    \partial_y \log\left(1 + \frac{y}{k^\alpha}\right)
    = \frac{2m}{1+y} - 2 \sum_{k=1}^{j}
    \frac{1}{k^\alpha \left(1+\frac{y}{k^\alpha}\right)}.
  \end{equation*}
  From this expression the result follows directly.
\end{proof}
\begin{lemma}
  \label{thm:poincare-y-rho}
  For $m > \frac 12$ introduce the constant
  \begin{equation*}
    C_m = \sqrt\frac{1}{2m-1}.
  \end{equation*}
  Then for all $\alpha \ge 0$, $j \in \N$  and all $f = f(y)$,
  \begin{equation*}
    \sup_{y \ge 0}
    \left(\frac{\rho_j(y)}{\rho_0(y)}\right)^{1/2}
    \int_0^y |f(z)|\, \dd z
    \le C_m \, \| f \|_{L^2(\rho_j)}.
  \end{equation*}
 More generally, for $0 \le n \le j$ with $n < m - \frac 12$ one has
  \begin{equation*}
    \sup_{y \ge 0}
    \left(\frac{\rho_j(y)}{\rho_n(y)}\right)^{1/2}
    \int_0^y |f(z)|\, \dd z
    \le C_{m-n} \, \| f \|_{L^2(\rho_j)}.
  \end{equation*}
 Eventually, for all $A = A(x,y)$ and $B = B(x,y)$, the following inequality holds:
  \begin{equation*}
    \| A \int_0^y B(z)\, \dd z \|_j
    \le C_m \| A \|_{L^\infty_xL^2_y(\rho_0)}
    \| B \|_{j}.
  \end{equation*}
\end{lemma}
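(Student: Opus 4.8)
All three inequalities are variants of a weighted Hardy estimate, and each follows from Cauchy--Schwarz once the right cancellation between the numerator weight at $y$ and the denominator weight under the integral is spotted. For the first bound I would write $\int_0^y |f(z)|\,\dd z = \int_0^y |f(z)|\,\rho_j(z)^{1/2}\,\rho_j(z)^{-1/2}\,\dd z$ and apply Cauchy--Schwarz, together with $\int_0^y |f|^2 \rho_j \le \|f\|_{L^2(\rho_j)}^2$, to obtain
\begin{equation*}
  \left(\int_0^y |f(z)|\,\dd z\right)^2 \le \|f\|_{L^2(\rho_j)}^2 \int_0^y \frac{\dd z}{\rho_j(z)}.
\end{equation*}
It then suffices to prove the purely pointwise weight estimate $\frac{\rho_j(y)}{\rho_0(y)}\int_0^y \rho_j(z)^{-1}\,\dd z \le C_m^2$ for all $y\ge 0$ and all $j$.

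This is the only step that requires thought, and it is where the specific shape of $\rho_j$ enters. Writing $\rho_j(z)^{-1} = \rho_0(z)^{-1}\prod_{k=1}^j(1+z/k^\alpha)^2$ and using that each factor $(1+z/k^\alpha)^2$ is nondecreasing in its argument, so that $\prod_{k=1}^j(1+z/k^\alpha)^2 \le \prod_{k=1}^j(1+y/k^\alpha)^2$ whenever $0\le z\le y$, the entire $y$-dependent product pulls out of the integral and exactly cancels $\rho_j(y)/\rho_0(y) = \prod_{k=1}^j(1+y/k^\alpha)^{-2}$. One is left with $\int_0^y \rho_0(z)^{-1}\,\dd z \le \int_0^\infty (1+z)^{-2m}\,\dd z = \frac{1}{2m-1} = C_m^2$, which is the claim, uniformly in $\alpha$ and $j$; the decay in $y$ is supplied entirely by $\rho_0$. (Note that the naive monotonicity $\rho_j \ge \rho_{j'}$ for $j\le j'$ is useless here; one really needs the cancellation above.) The general version with $\rho_n$ in place of $\rho_0$ is proved identically: the same step reduces it to $\int_0^y \rho_n(z)^{-1}\,\dd z$, and since $1+z/k^\alpha \le 1+z$ for $k\ge 1$ one has $\rho_n(z)^{-1} = \rho_0(z)^{-1}\prod_{k=1}^n(1+z/k^\alpha)^2 \le (1+z)^{2n-2m}$, whose integral over $\R^+$ equals $\frac{1}{2(m-n)-1} = C_{m-n}^2$ precisely under the hypothesis $n < m-\tfrac12$.

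For the last inequality I would freeze $x\in\T$ and apply the first bound to $f(z) = B(x,z)$, which gives $\bigl|\int_0^y B(x,z)\,\dd z\bigr|^2 \le C_m^2\,\frac{\rho_0(y)}{\rho_j(y)}\,\|B(x,\cdot)\|_{L^2(\rho_j)}^2$. Multiplying by $|A(x,y)|^2\rho_j(y)$, integrating in $y$, and using $\int_0^\infty |A(x,y)|^2\rho_0(y)\,\dd y \le \|A\|_{L^\infty_xL^2_y(\rho_0)}^2$ yields, for each $x$,
\begin{equation*}
  \int_0^\infty |A(x,y)|^2\Bigl|\int_0^y B(x,z)\,\dd z\Bigr|^2 \rho_j(y)\,\dd y \;\le\; C_m^2\,\|A\|_{L^\infty_xL^2_y(\rho_0)}^2 \int_0^\infty |B(x,y)|^2\rho_j(y)\,\dd y,
\end{equation*}
and integrating this in $x$ over $\T$ gives $\|A\int_0^y B\|_j^2 \le C_m^2\,\|A\|_{L^\infty_xL^2_y(\rho_0)}^2\,\|B\|_j^2$. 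There is no genuine obstacle in this last part; the only point to watch is that one must use the $L^\infty_x$ norm of $A$ (not $\|A\|_0$), so that the full $\rho_j$-mass of $B$ survives the integration in $x$.
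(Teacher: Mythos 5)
Your proof is correct and uses essentially the same two ingredients as the paper's proof (Cauchy--Schwarz with the weight $\rho_j^{\pm 1/2}$, and the monotonicity of $\prod_{k\le j}(1+z/k^\alpha)^2$ to pull the $y$-dependent factor out of the integral), only applied in the opposite order: you do Cauchy--Schwarz first and then exploit monotonicity on $\int_0^y \rho_j^{-1}$, whereas the paper first moves $(\rho_j(y)/\rho_n(y))^{1/2}$ inside the integral using that $\rho_j/\rho_n$ is non-increasing and then applies Cauchy--Schwarz. The reduction to $\int_0^\infty(1+z)^{2n-2m}\,\dd z = C_{m-n}^2$ and the final $A,B$ estimate (freeze $x$, apply the scalar bound, integrate) are the same as in the paper.
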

\begin{proof}
  Note that $\rho_j/\rho_n$ for $j \ge n$ is non-increasing. Hence
  \begin{equation*}
    \left(\frac{\rho_j(y)}{\rho_n(y)}\right)^{1/2}
    \int_0^y |f(z)|\, \dd z
    \le \int_0^y \left(\frac{\rho_j(z)}{\rho_n(z)}\right)^{1/2}
    |f(z)| \, \dd z
    \le \| f \|_{L^2(\rho_j)}
    \left(
      \int_0^y \frac{1}{\rho_n(z)} \, \dd z
    \right)^{1/2},
  \end{equation*}
  where we used the Cauchy-Schwarz inequality in the second inequality.

  As $\alpha \ge 0$ we find directly that
  \begin{equation*}
    \frac{1}{\rho_n(y)}
    \le (1+y)^{-2m}
    \prod_{k=1}^j \left(1 + \frac{y}{k^{\alpha}}\right)^{2}
    \le \frac{1}{(1+y)^{2m-2n}}
  \end{equation*}
  whose integral over $y \in \R^+$ gives $C_{m-n}^2$. This proves the
 first and second bounds.   The remaining estimate with $A$ and $B$ follows directly.
\end{proof}

The weights are decaying so that $\rho_j \le \rho_k$ for $j \ge k$. As
$\alpha \ge 0$, we have for $j \in \N$ that
$(1+y)^2 \rho_{j+1} \le \jp^{2\alpha} \rho_j$ and
$\rho_{j+1} \ge \frac{\rho_j}{(1+y)^2}$. This shows:
\begin{lemma}
  \label{thm:rho-j-j-1}
  Let $\alpha \ge 0$. For $j\in\N$, $A = A(x,y)$ and $B = B(x,y)$ it
  holds that for
  \begin{equation*}
    \| A \|_{j+1} \le \| A \|_j,\qquad
    \| (1+y) A \|_{j+1} \le \jp^{\alpha} \| A \|_j
  \end{equation*}
  and
  \begin{equation*}
    \left\| \frac{A}{(1+y)} \right\|_j \le \| A \|_{j+1},
    \qquad
    \| A \int_0^y B(z)\, \dd z \|_j
    \le C_m \| (1+y) A \|_{L^\infty_xL^2_y(\rho_0)}
    \| B \|_{j+1}.
  \end{equation*}
\end{lemma}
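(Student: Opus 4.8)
The plan is to prove the four inequalities of \cref{thm:rho-j-j-1} by reducing them to pointwise comparisons between the weights $\rho_j$ and $\rho_{j+1}$, exactly as announced in the sentence preceding the statement. First I would record the two pointwise facts. Since $\alpha \ge 0$, for every $k \ge 1$ and $y \ge 0$ we have $1 + y/k^\alpha \le 1+y$, hence
\begin{equation*}
  \rho_{j+1}(y) = \rho_j(y)\left(1 + \tfrac{y}{\jp^\alpha}\right)^{-2} \le \rho_j(y),
\end{equation*}
which gives $\|A\|_{j+1} \le \|A\|_j$ after multiplying by $|A(x,y)|^2$ and integrating. For the second inequality, I multiply through by $(1+y)^2$: from $1 + y/\jp^\alpha \ge (1+y)/\jp^\alpha$ (valid because $\jp^\alpha \ge 1$) we get $(1+y)^2 \rho_{j+1}(y) \le \jp^{2\alpha}\rho_j(y)$, so $\|(1+y)A\|_{j+1}^2 = \int (1+y)^2|A|^2\rho_{j+1} \le \jp^{2\alpha}\|A\|_j^2$, i.e.\ $\|(1+y)A\|_{j+1} \le \jp^\alpha\|A\|_j$.

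Next I would handle the third inequality, which is really the second pointwise fact: from $1 + y/\jp^\alpha \le 1+y$ one has $\rho_{j+1}(y) \ge \rho_j(y)/(1+y)^2$, so $\|A/(1+y)\|_j^2 = \int |A|^2 \rho_j/(1+y)^2 \le \int |A|^2 \rho_{j+1} = \|A\|_{j+1}^2$, giving $\|A/(1+y)\|_j \le \|A\|_{j+1}$. Finally, for the last inequality I would combine the third bound with the Hardy-type estimate already proved in \cref{thm:poincare-y-rho}. Write $A = (1+y)\,\tilde A$ with $\tilde A = A/(1+y)$; then the last estimate of \cref{thm:poincare-y-rho} applied to $\tilde A$ in place of $A$ (with the $\|\cdot\|_j$ on the left) reads
\begin{equation*}
  \Big\| \tfrac{A}{1+y} \int_0^y B(z)\,\dd z \Big\|_j \le C_m \,\|\tfrac{A}{1+y}\|_{L^\infty_x L^2_y(\rho_0)}\,\|B\|_j,
\end{equation*}
but this is not quite the claimed form. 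Instead I would use the pointwise bound directly: inside $\|\cdot\|_j$ I bound $\rho_j(y) \le (1+y)^2 \rho_{j+1}(y)$, so that
\begin{equation*}
  \Big\| A \int_0^y B \Big\|_j^2 = \int \Big| A \int_0^y B\Big|^2 \rho_j \le \int \Big|(1+y)A \int_0^y B\Big|^2 \rho_{j+1},
\end{equation*}
and then apply the third estimate of \cref{thm:poincare-y-rho} at level $j+1$ with the $L^\infty_x L^2_y(\rho_0)$ amplitude $(1+y)A$ and the function $B$, which yields $C_m \|(1+y)A\|_{L^\infty_x L^2_y(\rho_0)}\|B\|_{j+1}$.

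I do not expect any genuine obstacle here: every step is an elementary pointwise inequality between the $\rho_j$'s followed by an integration or an application of \cref{thm:poincare-y-rho}. The only point requiring a little care is matching the index in the last inequality — one must push the weight from level $j$ to level $j+1$ using $\rho_j \le (1+y)^2\rho_{j+1}$ \emph{before} invoking the Hardy bound, rather than after, so that the resulting norm of $B$ is $\|B\|_{j+1}$ and not $\|B\|_j$. The hypotheses on $m$ needed for \cref{thm:poincare-y-rho} (namely $m > 1/2$) are inherited and need not be restated.
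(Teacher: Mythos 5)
Your argument is correct and matches the paper's approach exactly: the paper proves the lemma by recording the same three pointwise weight inequalities ($\rho_{j+1}\le\rho_j$, $(1+y)^2\rho_{j+1}\le\jp^{2\alpha}\rho_j$, $\rho_{j+1}\ge\rho_j/(1+y)^2$) and then, implicitly, combining the last one with the Hardy-type bound of \cref{thm:poincare-y-rho} at level $j+1$. Your explicit chain for the fourth inequality — pushing the weight from $\rho_j$ to $(1+y)^2\rho_{j+1}$ before invoking \cref{thm:poincare-y-rho} — is precisely the intended route.
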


Let us insist again that most parts of the proof would work with
constant weight $\rho$ instead of $\rho_j$. The dependency on $j$ will
be only needed to treat the commutator terms coming from
$V^P \partial_y U^P$. The difficulty is that $V^P$ grows like $y$ as
soon as $U^E$ is non-constant. Here the crucial property that we will
use is that we can control $\| (1+y) A \|_{j+1}$ by
$\jp^{\alpha} \| A \|_j$.

The Prandtl equation is given for $(U^P,V^P)$ with inhomogeneous
boundary conditions at $y \to \infty$. In order to work with
homogeneous boundary conditions at zero and infinity, we introduce
\begin{equation*}
  U^e(t,x,y) = (1-\ee^{-y}) U^E(t,x), \quad
  V^e(t,x,y) = - (y + \ee^{-y}-1)\partial_x U^E(t,x)
\end{equation*}
and set $u = U^P - U^e$, $v = -\int_0^y \partial_x u = V^P - V^e$. Then,
\begin{equation} \label{eq:u}
  \partial_t u + (u \partial_x + v \partial_y) u   + (U^e \partial_x + V^e \partial_y) u
  + (u \partial_x  + v \partial_y) U^e - \partial^2_y u = f^e
\end{equation}
where
\begin{equation} \label{def:fe}
  f^e = \partial_t U^E   + U^E \partial_x U^E - \partial_t U^e - U^e \partial_x U^e
  - V^e \partial_y U^e + \partial^2_y U^e.
\end{equation}
In the new variables $(u,v)$ the boundary conditions are
\begin{equation} \label{eq:BC:bis}
  u = v  = 0 \quad \text{ at } y=0,\qquad
  \text{ and } \lim_{y\to\infty} u =0.
\end{equation}
The condition at $y\to \infty$ will be encoded in the functional space
of $u$.

To prove Theorem \ref{thm:main}, the point is to obtain good estimates
for Gevrey norms of $u$ of type \eqref{eq:Gevrey:2d} for
time-dependent radius $\tau = \tau(t)$. More precisely, we give
ourselves parameters $m,\alpha,\gamma,r$, to be fixed later, as well
as the time-dependent radius $\tau(t) = \tau_0 \ee^{-\beta t}$, with
$\beta > 0$ to be fixed later. Then, for any function $f = f(t,x)$ or
$f=f(t,x,y)$ and $j \in \N$ we set
\begin{equation*}
  f_j(t,\cdot)  :=  M_j\, \partial_x^j  f(t,\cdot)
  \quad \text{with} \quad
  M_j := \frac{\tau(t)^{j+1} \jp^r}{(j!)^{\gamma}}.
\end{equation*}
Taking $j$ derivatives in $x$ of \eqref{eq:u} and multiplying by $M_j$ yields
\begin{equation}
  \label{eq:u-j}
  \Big(\partial_t + \beta \jp + U^P \partial_x + \jp \partial_x U^P
  + V^P \partial_y - \partial_y^2\Big) u_j
  + \partial_y U^P v_j + j \partial_{xy} U^P \partial_x^{-1} v_j
  = F_j
\end{equation}
where $F_j$ collects all terms with less than $j$ derivatives in $x$
as well as the weighted derivative of the forcing $f^e$. It is given by
\begin{equation*}
  \begin{aligned}
    F_j
    = f_j^e &+ M_j \left[u \partial_x, \partial_x^j\right] u
    + M_j \, \partial_x u\, \pa_x^j u \\
    &+ M_j \left[\partial_y u, \partial_x^j\right] v
    + M_j j\, \partial_{xy} u\, \partial_x^{j-1} v
    + M_j v\, \partial_x^j \partial_y u \\
    &+  M_j \left[U^e \partial_x, \partial_x^j\right] u
    + M_j j\, \partial_x U^e\, \partial_x^j u \\
    &+ M_j \left[V^e \partial_y, \partial_x^j\right] u \\
    &+ M_j \left[\partial_x U^e, \partial_x^j\right] u \\
    &+ M_j \left[\partial_y U^e, \partial_x^j \right] v
    + M_j j\, \partial_{xy} U^e\, \partial_x^{j-1} v.
  \end{aligned}
\end{equation*}

We now introduce our crucial auxiliary functions  $H_j(t,x,y)$ defined by
\begin{equation}
  \label{eq:def-h-j}
  \begin{lgathered}
    \Big(\partial_t + \beta \jp + U^P \partial_x + \jp \partial_x U^p
    + V^P \partial_y - \partial_y^2\Big)
    \int_0^y H_j \, \dd z
    = \int_0^y u_j \, \dd z, \\
    H_j|_{t=0} = 0, \qquad
    \partial_y H_j|_{y=0} = 0, \qquad
    H_j|_{y\to \infty} = 0.
  \end{lgathered}
\end{equation}
For the existence of \(H_j\), one can consider \eqref{eq:def-h-j} as a
convection-diffusion equation for \(A_j = \int_0^y H_j\, \dd z\), with
boundary conditions \(A_j|_{y=0} = \partial_y A_j|_{y\to\infty} = 0\), 
which has a solution by the classical theory of parabolic PDEs. The
PDE \eqref{eq:def-h-j} itself then implies that \(\partial_y^2
A_j|_{y=0}\) so that taking \(H_j = \partial_y A_j\) gives the
required solution.

We further introduce the corresponding test functions $\phi_j$ by
\begin{equation}
  \label{eq:def-phi-j}
  \begin{lgathered}
    \left(-\partial_t + \beta \jp - U^P \partial_x + j \partial_x U^p
      - V^P \partial_y - \partial_y V^P - V^P \frac{\partial_y \rho_j}{\rho_j}
      - \left(\partial_y + \frac{\partial_y \rho_j}{\rho_j}\right)^2 \right)
    \phi_j = H_j, \\
    \phi_j|_{t=T} = 0, \qquad
    \phi_j|_{y=0} = 0, \qquad
    \phi_j|_{y\to \infty} = 0.
  \end{lgathered}
\end{equation}
Note here that the operator acting on $\phi_j$ is the formal adjoint
operator of the operator acting on $\int_0^y H_j \, \dd z$ in
\eqref{eq:def-h-j}, {\em with respect to the $L^2(\rho_j)$ scalar product, denoted $\ip{\,}{}_j$}. This is a backward heat equation solved backward
in time for $t \in [0,T]$.

Testing \eqref{eq:def-phi-j} against $\phi_j$ in $\| \cdot \|_j$ and
integrating over $[t,T]$ yields
\begin{equation*}
  \begin{aligned}
    &\frac{1}{2} \| \phi_j(t) \|_j^2
    + \beta \jp \int_t^T \| \phi_j(s) \|_j^2\, \dd s
    + (j{+}\frac 12) \int_t^T
    \ip{\partial_x U^P \phi_j}{\phi_j}_j\, \dd s \\
    &- \frac 12 \int_t^T \ip{\left(\partial_y V^P + V^P
        \frac{\partial_y \rho_j}{\rho_j}\right) \phi_j}{\phi_j}_j \, \dd s
    + \int_t^T \| \partial_y \phi_j(s) \|_j^2 \, \dd s
    + \int_t^T \ip{\frac{\partial_y \rho_j}{\rho_j} \phi_j}{\partial_y
      \phi_j}_j\, \dd s \\
    &= \int_t^T \ip{H_j}{\phi_j}_j\, \dd s.
  \end{aligned}
\end{equation*}
Hence we find
\begin{equation*}
  \begin{aligned}
    &\frac{1}{2} \| \phi_j(t) \|_j^2
    + \frac{3\beta \jp}{4} \int_t^T \| \phi_j(s) \|_j^2\, \dd s
    + \frac{1}{2} \int_t^T \| \partial_y \phi_j(s) \|_j^2 \, \dd s \\
    &\le
    \frac{1}{\beta\jp} \int_t^T \|H_j(s)\|_j^2\, \dd s
    + \left(
      (j{+}\frac 12) \| \partial_x U^P \|_{\infty}
      + \frac 12
      \left\| \partial_y V^P {+} V^P \frac{\partial_y \rho_j}{\rho_j} \right\|_{\infty}
      + \frac 12 \left\| \frac{\partial_y \rho_j}{\rho_j} \right\|_{\infty}^2
    \right) \int_t^T \| \phi_j(s) \|_j^2\, \dd s.
  \end{aligned}
\end{equation*}
By \cref{thm:logarithmic-rho-bound}, under the condition $\alpha \ge \frac 12$,  we get the
following control:
\begin{lemma}
  \label{thm:control-phi-by-h}
  Fix $m\ge 0$ and $\alpha \ge \frac 12$. Then there exist a constant
  $\mathcal{C} = \mathcal{C}(m,\alpha)$ such that for all $j \in \N$
  it holds that
  \begin{equation*}
    \| \phi_j(t) \|_j^2
    + \beta \jp \int_t^T \| \phi_j(s) \|_j^2\, \dd s
    +  \int_t^T \| \partial_y \phi_j(s) \|_j^2 \, \dd s \\
    \le
    \frac{2}{\beta\jp} \int_t^T \|H_j(s)\|_j^2\, \dd s
  \end{equation*}
  if
  \begin{equation*}
    \beta \ge \mathcal{C}
    \left(
      1 + \| \partial_x U^P \|_{\infty}
      + \| \partial_y V^P \|_{\infty}
      + \left\| \frac{V^P}{1+y} \right\|_{\infty}
    \right).
  \end{equation*}
\end{lemma}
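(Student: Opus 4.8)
The statement is essentially an energy estimate for the backward heat equation \eqref{eq:def-phi-j} tested against $\phi_j$ in the weighted space $L^2(\rho_j)$. The plan is to carry out the computation already sketched in the paragraph preceding the lemma, and then absorb the bad terms into the good ones once $\beta$ is chosen large enough. Concretely: first, pair \eqref{eq:def-phi-j} with $\phi_j$ in $\ip{\,}{}_j$ and integrate over $[t,T]$, using $\phi_j|_{t=T}=0$. The time-derivative term $-\ip{\partial_t\phi_j}{\phi_j}_j$ integrates to $\tfrac12\|\phi_j(t)\|_j^2$ (the weight $\rho_j$ is time-independent, so there is no extra term). The term $\beta\jp\|\phi_j\|_j^2$ is kept on the left. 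The transport term $-\ip{U^P\partial_x\phi_j}{\phi_j}_j$ integrates by parts in $x$ (no boundary, $\T$ is periodic) to produce $\tfrac12\ip{\partial_x U^P\,\phi_j}{\phi_j}_j$, which combines with the explicit $j\,\ip{\partial_x U^P\,\phi_j}{\phi_j}_j$ term to give the coefficient $(j+\tfrac12)$. The terms $-\ip{V^P\partial_y\phi_j}{\phi_j}_j$ and $-\ip{\partial_y V^P\,\phi_j}{\phi_j}_j$ combine after integration by parts in $y$: since $\int (V^P\partial_y|\phi_j|^2)\rho_j = -\int |\phi_j|^2\partial_y(V^P\rho_j)$ and $\partial_y(V^P\rho_j)=(\partial_y V^P)\rho_j + V^P\partial_y\rho_j$, the net contribution is $-\tfrac12\ip{(\partial_y V^P + V^P\tfrac{\partial_y\rho_j}{\rho_j})\phi_j}{\phi_j}_j$ — this is why those two terms appear grouped in \eqref{eq:def-phi-j}, and the $V^P\frac{\partial_y\rho_j}{\rho_j}\phi_j$ term in the operator exactly cancels the leftover. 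Finally, the diffusion operator $-(\partial_y + \tfrac{\partial_y\rho_j}{\rho_j})^2$ is, by construction, $-\tfrac{1}{\rho_j}\partial_y(\rho_j\partial_y\,\cdot\,)$, so pairing it with $\phi_j$ in $L^2(\rho_j)$ and integrating by parts in $y$ (the boundary terms vanish since $\phi_j|_{y=0}=0$ and $\phi_j|_{y\to\infty}=0$) yields exactly $\|\partial_y\phi_j\|_j^2$; expanding the square instead gives the alternative form with the cross term $\ip{\frac{\partial_y\rho_j}{\rho_j}\phi_j}{\partial_y\phi_j}_j$. This reproduces the displayed identity verbatim.

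The second step is to bound the right-hand side. On the RHS there is $\int_t^T \ip{H_j}{\phi_j}_j$, which by Cauchy–Schwarz and Young is $\le \frac{1}{\beta\jp}\int_t^T\|H_j\|_j^2 + \frac{\beta\jp}{4}\int_t^T\|\phi_j\|_j^2$, the latter absorbed into the $\beta\jp$ term on the left (leaving the coefficient $\tfrac{3\beta\jp}{4}$ as displayed). For the remaining three error terms — those involving $\|\partial_x U^P\|_\infty$, $\|\partial_y V^P + V^P\frac{\partial_y\rho_j}{\rho_j}\|_\infty$, and $\|\frac{\partial_y\rho_j}{\rho_j}\|_\infty^2$ — one uses $|\ip{g\,\phi_j}{\phi_j}_j|\le\|g\|_\infty\|\phi_j\|_j^2$ and, for the cross term, $|\ip{\frac{\partial_y\rho_j}{\rho_j}\phi_j}{\partial_y\phi_j}_j|\le \frac12\|\partial_y\phi_j\|_j^2 + \frac12\|\frac{\partial_y\rho_j}{\rho_j}\|_\infty^2\|\phi_j\|_j^2$, absorbing the gradient piece to leave $\tfrac12\|\partial_y\phi_j\|_j^2$ on the left. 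This yields the displayed intermediate inequality.

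The third step invokes \cref{thm:logarithmic-rho-bound}: under $\alpha\ge\tfrac12$ (so $1-\alpha\le\tfrac12<1$ and also the $\alpha>1$ and $\alpha=1$ cases are handled), we have $\|\frac{\partial_y\rho_j}{\rho_j}\|_\infty \le C_l\jp^{1-\alpha}\le C_l\jp^{1/2}$, hence $\|\frac{\partial_y\rho_j}{\rho_j}\|_\infty^2\le C_l^2\jp$, and $\|V^P\frac{\partial_y\rho_j}{\rho_j}\|_\infty \le C_l\,\jp\,\|\frac{V^P}{1+y}\|_\infty$ by the second bound of that lemma. Therefore the bracketed coefficient multiplying $\int_t^T\|\phi_j\|_j^2$ is bounded by $\jp$ times a constant multiple of $1 + \|\partial_x U^P\|_\infty + \|\partial_y V^P\|_\infty + \|\frac{V^P}{1+y}\|_\infty$. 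Choosing $\beta \ge \mathcal C(m,\alpha)(1 + \|\partial_x U^P\|_\infty + \|\partial_y V^P\|_\infty + \|\frac{V^P}{1+y}\|_\infty)$ with $\mathcal C$ large enough makes this coefficient $\le \tfrac{\beta\jp}{4}$, which absorbs into the remaining $\tfrac{3\beta\jp}{4}$ term to leave $\tfrac{\beta\jp}{2}$, and then multiplying through by $2$ gives the stated conclusion with constant $2/(\beta\jp)$ on the right.

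I do not expect a serious obstacle here: the lemma is a clean weighted energy estimate and the operator in \eqref{eq:def-phi-j} has been engineered so that every integration by parts produces exactly the term needed. The only point requiring a little care is bookkeeping the $j$-dependence uniformly — making sure the error coefficient is genuinely $O(\jp)$ and not $O(\jp^2)$ — which is precisely what the $\alpha\ge\tfrac12$ hypothesis guarantees through \cref{thm:logarithmic-rho-bound}; this is the one place where the choice of the weight family $\rho_j$ is essential rather than cosmetic.
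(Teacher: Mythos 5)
Your proof is correct and follows the paper's own energy-estimate argument step by step, including the Cauchy--Schwarz/Young treatment of $\ip{H_j}{\phi_j}_j$, the absorption of the $\ip{\frac{\partial_y\rho_j}{\rho_j}\phi_j}{\partial_y\phi_j}_j$ cross term, and the final invocation of \cref{thm:logarithmic-rho-bound} under $\alpha\ge\tfrac12$. One slip worth flagging: $\bigl(\partial_y + \tfrac{\partial_y\rho_j}{\rho_j}\bigr)^2$ is the $L^2(\rho_j)$-adjoint of $\partial_y^2$, namely $\rho_j^{-1}\partial_y^2(\rho_j\,\cdot\,)$, and \emph{not} the self-adjoint divergence form $\rho_j^{-1}\partial_y(\rho_j\partial_y\,\cdot\,)$; these are different operators, so the pairing does not reduce to $\|\partial_y\phi_j\|_j^2$ alone but equals $\|\partial_y\phi_j\|_j^2 + \ip{\frac{\partial_y\rho_j}{\rho_j}\phi_j}{\partial_y\phi_j}_j$, with the cross term genuinely present rather than being an equivalent rewriting. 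Since in your second step you do carry that cross term and absorb its gradient half into the left-hand side, the conclusion is unaffected and matches the paper.
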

Note that for $\alpha < \frac 12$, the term with
$\| \frac{\partial_y \rho_j}{\rho_j} \|_{\infty}^2$ could not have
been absorbed. This \textit{a priori} estimate also ensure the
existence of $\phi_j$ as solution of \eqref{eq:def-phi-j}. A similar
estimate holds for $H_j$ which ensures the existence of $H_j$ as
solution of \eqref{eq:def-h-j}.

\section{Linear estimates} \label{sec:linear}

In this section we analyse the linearised equation \eqref{eq:u-j} and
obtain an estimate for the solution in terms of the $F_j$ containing
the forcing and lower-order terms. For this, we shall first analyse \eqref{eq:u-j}
for a fixed $j$. We will obtain a control of $H_j$ in terms of the forcing
$F_j$ and an error term $\partial_x H_j$, which will be shown to be approximately
$\jp^{\gamma} H_{j+1}$. By summing over $j$, we will find the following control.
\begin{lemma}
  \label{thm:control-h-f}
  Fix
  $m > \frac 12, \frac 12 \le \alpha \le \frac 12 + \gamma, 1 \le
  \gamma \le 2, r \in \R$. Then there exists a constant
  $\mathcal{C} = \mathcal{C}(m,\alpha,\gamma,r)$ such that for
  all $\tau_1$, $\beta$ and $T$ such that
  \begin{equation*}
    \beta \ge \mathcal{C} (1 + \| (U^P,V^P) \|_{low})\,
    (1 + \frac{1}{\tau_1} + \| (U^P,V^P) \|_{low})
    \text{ and }
    \tau(T) \ge \tau_1
  \end{equation*}
  the $H_j$'s defined by \eqref{eq:def-h-j} for  solutions $u_j$'s of
  \eqref{eq:u-j} satisfy
  \begin{equation*}
    \begin{aligned}
      &\sum_{j=0}^{\infty} \beta^2 \jp^{2\gamma}
      \left[
        \int_0^T \| H_j(t) \|_j^2 \, \dd t
        + \frac{1}{\beta\jp} \| H_j(T) \|_j^2
        + \frac{1}{\beta\jp} \int_0^T \| \partial_y H_j \|_j^2 \, \dd t
      \right] \\
      &\le 16 \sum_{j=0}^{\infty}
      \left[
        \frac{\jp^{2\gamma-4}}{\beta^2}
        \int_0^T \| F_j(t) \|_j^2\, \dd t
        + \frac{\jp^{2\gamma-3}}{\beta}
        \| u_{\init,j} \|_j^2 \right].
    \end{aligned}
  \end{equation*}
\end{lemma}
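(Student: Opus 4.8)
\emph{Per-$j$ estimate.} Fix $j$ and abbreviate by $\mathcal S_j$ the operator $\partial_t+\beta\jp+U^P\partial_x+\jp\,\partial_x U^P+V^P\partial_y-\partial_y^2$ appearing in \eqref{eq:def-h-j}, so that $A_j:=\int_0^y H_j\,\dd z$ obeys $\mathcal S_j A_j=\int_0^y u_j\,\dd z$, hence $u_j=\partial_y\mathcal S_j A_j$ and $v_j=-\partial_x\mathcal S_j A_j$. Substituting this into the linear equation \eqref{eq:u-j} and integrating once in $y$ (using the boundary conditions of \eqref{eq:def-h-j} and at $y=0$), one recasts \eqref{eq:u-j} as a second-order equation of the schematic form
\begin{equation*}
  \mathcal S_j^2 A_j = \int_0^y F_j\,\dd z \;+\;(\text{commutator and lower-order terms in }A_j)\;+\;(\text{a }y=0\text{ boundary term, tied to compatibility}),
\end{equation*}
the nonlinear analogue of the substitution $\Psi=(\lambda+\ii kU_s-\partial_y^2)\psi$ in \eqref{def:auxiliary1}. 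I then test this identity in $\ipj{\cdot}{\cdot}$ against $\phi_j$ of \eqref{eq:def-phi-j}, solved backward in time. Since the operator in \eqref{eq:def-phi-j} is precisely the $L^2(\rho_j)$-adjoint $\mathcal S_j^\dagger$ of $\mathcal S_j$, with $\mathcal S_j^\dagger\phi_j=H_j$, the pairing $\ipj{\mathcal S_j^2A_j}{\phi_j}$ collapses to $\ipj{\mathcal S_jA_j}{H_j}$, and integrating by parts in $y$ and $t$ (the $t$-boundary terms vanishing because $A_j|_{t=0}=0$, $\phi_j|_{t=T}=0$) produces on the left, exactly as in the shear-flow computation recalled in the introduction, the coercive quantity $\tfrac12\|H_j(T)\|_j^2+\beta\jp\int_0^T\|H_j\|_j^2+\int_0^T\|\partial_yH_j\|_j^2$. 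On the right, the forcing and the initial datum are paired against $\phi_j$; bounding the $y$-antiderivatives by the Hardy/Poincaré inequality of \cref{thm:poincare-y-rho} and then using the a priori control $\sup_t\|\phi_j(t)\|_j^2+\beta\jp\int_0^T\|\phi_j\|_j^2+\int_0^T\|\partial_y\phi_j\|_j^2\lesssim\frac1{\beta\jp}\int_0^T\|H_j\|_j^2$ of \cref{thm:control-phi-by-h} turns them into $\lesssim\frac1{\beta^4\jp^4}\int_0^T\|F_j\|_j^2+\frac1{\beta^3\jp^3}\|u_{\init,j}\|_j^2$. Among the remaining terms, those made of $U^P,V^P$ and boundedly many of their derivatives are controlled by $\|(U^P,V^P)\|_{low}$ and absorbed for $\beta$ large relative to it, while the transport term $V^P\partial_y$, with $V^P\sim-y\,\partial_xU^E$ growing linearly, is absorbed by trading the factor $1+y$ for $\jp^\alpha$ at the cost of one index via \cref{thm:rho-j-j-1} (this is why one needs $\tfrac12\le\alpha\le\tfrac12+\gamma$). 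The one term not absorbable at level $j$ is the loss-of-derivative term $\|\partial_xH_j\|_j$ — the analogue of the shear-flow commutator, but carrying only one extra $x$-derivative rather than a $\partial_y^2$, thanks to the diffusion built into \eqref{eq:def-h-j}. Dividing by $\beta\jp$ yields the per-$j$ bound
\begin{multline*}
  \int_0^T\|H_j\|_j^2\,\dd t+\frac1{\beta\jp}\|H_j(T)\|_j^2+\frac1{\beta\jp}\int_0^T\|\partial_yH_j\|_j^2\,\dd t \\
  \le\;\frac{C}{\beta^4\jp^4}\int_0^T\|F_j\|_j^2\,\dd t+\frac{C}{\beta^3\jp^3}\|u_{\init,j}\|_j^2+\frac{C}{\beta^4\jp^4}\int_0^T\|\partial_xH_j\|_j^2\,\dd t .
\end{multline*}

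\emph{Inter-level comparison.} Next I establish $\int_0^T\|\partial_xH_j\|_j^2\lesssim\jp^{2\gamma}\int_0^T\|H_{j+1}\|_{j+1}^2+(\text{quantities already controlled at level }j)$. Since $H_j=\partial_yA_j$, this compares $\partial_xA_j$ (obtained by differentiating \eqref{eq:def-h-j} in $x$) with $\frac{M_j}{M_{j+1}}A_{j+1}\approx\frac{\jp^\gamma}{\tau}A_{j+1}$ (equation \eqref{eq:def-h-j} at level $j+1$): the two solve $\mathcal S_j$-type equations with the same right-hand side $\int_0^y\partial_xu_j\,\dd z$, up to the shift $\mathcal S_{j+1}-\mathcal S_j=\beta+\partial_xU^P$ and the lower-order terms $\partial_xU^P\partial_xA_j$, $\jp\,\partial_x^2U^P A_j$, $\partial_xV^P\partial_yA_j$. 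Controlling their difference by an energy estimate — the $\beta$ from the shift being compensated by the $\beta$-weights already present — and reconciling $\rho_j$ with $\rho_{j+1}$ via \cref{thm:rho-j-j-1} gives the relation; here $1\le\gamma\le2$ and the precise product form of $\rho_j$ are essential.

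\emph{Summation.} Multiply the per-$j$ bound by $\beta^2\jp^{2\gamma}$ and sum over $j$. Every term is then of the desired form except $\sum_j\frac{\jp^{2\gamma-4}}{\beta^2}\int_0^T\|\partial_xH_j\|_j^2\lesssim\frac1{\beta^2}\sum_j\jp^{4\gamma-4}\int_0^T\|H_{j+1}\|_{j+1}^2$; reindexing $j\mapsto j-1$ and using $\gamma\le2$ (so $\jp^{2\gamma-4}\le1$) together with $\tau(t)\ge\tau(T)\ge\tau_1$ bounds it by $\frac{C}{\beta^4\tau_1^2}$ times the left-hand side, hence by a fraction of it once $\beta\ge\mathcal C/\sqrt{\tau_1}$ — implied by the hypothesis $\beta\ge\mathcal C(1+\|(U^P,V^P)\|_{low})(1+\frac1{\tau_1}+\|(U^P,V^P)\|_{low})$. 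After an a priori truncation making the sums finite (e.g. working with finitely many $x$-modes and passing to the limit), this absorption is legitimate, and tracking the constants produces the factor $16$.

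\emph{Main obstacle.} The crux is the inter-level comparison: $H_j$ and $H_{j+1}$ are not related algebraically — their defining equations differ by a $\beta$-shift, by a change of the stretching coefficient $\jp\,\partial_xU^P$, and by a change of weight — so $\|\partial_xH_j\|_j\sim\jp^\gamma\|H_{j+1}\|_{j+1}$ must itself be extracted from an energy estimate. This is the step that pins the Gevrey exponent at exactly $2$ (i.e. $\gamma\le2$, no better) and forces the specific weight $\rho_j=\rho_0\prod_{k=1}^j(1+y/k^\alpha)^{-2}$, a naive choice such as $(1+y)^{2(m-j)}$ spoiling the commutator with $\partial_y^2$. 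A secondary, purely technical difficulty is organising all lower-order and commutator terms so that the threshold on $\beta$ depends only on $\|(U^P,V^P)\|_{low}$ and $1/\tau_1$, as needed for the nonlinear closure later.
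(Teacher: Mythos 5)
Your proposal follows the paper's strategy closely: a per-$j$ estimate obtained by testing against $\phi_j$, an inter-level comparison relating $\partial_x H_j$ to $H_{j+1}$ via an energy estimate on the difference, and a summation with absorption of the commutator term under the constraints $\gamma\le 2$ and $\beta$ large relative to $\|(U^P,V^P)\|_{low}$ and $1/\tau_1$. This is the same plan as the paper's Lemmas~\ref{thm:control-h-j} and~\ref{thm:relate-partial-h-j}, followed by the summation.

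There is, however, a genuine error in the weight bookkeeping that would break the argument as written: you place $\partial_x H_j$ in the norm $\|\cdot\|_j$ both in the per-$j$ bound and in the inter-level comparison, whereas the argument requires $\|\cdot\|_{j+1}$. Writing $\partial_x H_j = \frac{M_j}{M_{j+1}} H_{j+1} + \Delta_j$, one controls $\|\partial_x H_j\|_{j+1}$ by $\frac{\jp^\gamma}{\tau}\|H_{j+1}\|_{j+1}$ plus $\|\Delta_j\|_{j+1}$; but since $\rho_{j+1}\le\rho_j$ pointwise, $\|H_{j+1}\|_{j}\ge\|H_{j+1}\|_{j+1}$ and there is no reverse inequality, so your claimed bound $\int_0^T\|\partial_xH_j\|_j^2\lesssim\jp^{2\gamma}\int_0^T\|H_{j+1}\|_{j+1}^2+\dots$ does not hold. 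The whole point of the $j$-dependent weights is that the error terms $E_3$, $E_4$ which generate $\partial_x H_j$ come with an extra $\int_0^y$ structure that \cref{thm:rho-j-j-1} converts into the \emph{weaker} norm $\|\cdot\|_{j+1}$ at the price of a $(1+y)$ factor on the coefficient; if you discard this and land $\partial_x H_j$ in $\|\cdot\|_j$, the subsequent comparison with $H_{j+1}$ fails, and the summation cannot be closed. So you must carry $\|\partial_x H_j\|_{j+1}$ throughout.

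A secondary reformulation issue: you integrate \eqref{eq:u-j} in $y$ to obtain $\mathcal S_j^2A_j = \int_0^y F_j\,\dd z + \dots$ before testing against $\phi_j$, and you note this creates a $y=0$ boundary term. That term is $\partial_y u_j\vert_{y=0}$ (from $\int_0^y \partial_y^2 u_j = \partial_y^2\int_0^y u_j - \partial_y u_j\vert_{y=0}$), which is \emph{not} a compatibility datum — it is an evolving trace and not directly controlled. The paper sidesteps this entirely by testing \eqref{eq:u-j} itself against $\phi_j$: the first term gives $\int_0^T\ipj{u_j}{H_j}\,\dd t$ by the $\rho_j$-adjointness of the operator in \eqref{eq:def-phi-j}, with no boundary contributions since $u_j$, $\phi_j$ and $\int_0^y H_j$ all vanish at $y=0$, and the coercive quantity then emerges from substituting $u_j=\partial_y(\mathcal S_jA_j)=\mathcal S_j H_j + \text{commutators}$ inside $\ipj{u_j}{H_j}$. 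Your integrated version introduces additional commutators of $\int_0^y$ with $U^P\partial_x$, $V^P\partial_y$, $\partial_y^2$ that you would then have to track and control, together with the boundary trace.
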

Here we use a low-order control of $U^P$ and $V^P$ in order to control
the commutator error terms. From the required bounds, we define the
low-order norm as
\begin{equation} \label{def:low}
  \begin{aligned}
    \| (U^P,V^P) \|_{low}
    = & \sup_{t\in[0,T]} \max \Big(
    \max_{0\le k \le 3}\| \pa_x^k U^P \|_{\infty}, \|\pa_x \pa^2_y U^P \|_\infty,
    \| (1+y) \partial_y U^P \|_{\infty},
    \| (1+y) \partial_y^2 U^P \|_{\infty},\\
    &\| (1+y) \partial_y U^P \|_{L^\infty_xL^2_y(\rho_0)},
    \|  \partial_{xy} U^P \|_{L^\infty_xL^2_y(\rho_0)},  \| \partial_{xxy} U^P \|_{L^\infty_xL^2_y(\rho_0)}, \\
    &    \| (1+y)^2 \partial_y^2 U^P \|_{L^\infty_xL^2_y(\rho_0)}, \| (1+y) \partial_{x} \partial_y^2 U^P \|_{L^\infty_xL^2_y(\rho_0)},\quad    \max_{0\le k \le 2}\left\| \frac{\pa_x^k V^P}{1+y} \right\|_{\infty}
    \Big).
  \end{aligned}
\end{equation}

Although a main ingredient of our proof, the unknown $H_j$ is less
natural that the usual $u_j$, notably for the future treatment of the
nonlinearity, which involves $u_j$ and $\omega_j = \partial_y
u_j$. This is why we shall we relate the control of $H_j$ to $u_j$ and
show:
\begin{proposition}
  \label{thm:final-linear-control}
  Fix
  $m > \frac 12, \frac 12 \le \alpha \le \frac 12 + \gamma, 1 \le
  \gamma \le 2, r \in \R$. Then there exist constants
  $C = C(m,\alpha,\gamma,r)$ and
  $\mathcal{C} = \mathcal{C}(m,\alpha,\gamma,r)$ such that for all
  $\tau_1$, $\beta$ and $T$ such that
  \begin{equation*}
    \beta \ge \mathcal{C} (1 + \| (U^P,V^P) \|_{low})\,
    (1 + \frac{1}{\tau_1} + \| (U^P,V^P) \|_{low})
    \text{ and }
    \tau(T) \ge \tau_1
  \end{equation*}
  the solution $u$ of \eqref{eq:u-j} satisfies
  \begin{equation*}
    \begin{aligned}
      &\int_0^T \| u \|_{\gamma,\tau,r}^2\, \dd t
      + \sup_{t\in[0,T]}
      \frac{1}{\beta} \| u \|_{\gamma,\tau,r-\frac{\gamma}{2}}^2
      + \int_0^T \frac{1}{\beta} \| (1+y) \omega \|_{\gamma,\tau,r+1-\gamma}^2 \, \dd t\\
      &\quad+ \sup_{t\in[0,T]}
      \frac{1}{\beta^2}
      \| (1+y) \omega \|_{\gamma,\tau,r+\frac 12 - \gamma}^2
      + \frac{1}{\beta^2}
      \int_0^T \| (1+y) \partial_y \omega \|_{\gamma,\tau,r+\frac 12 -\gamma}^2\, \dd t\\
      &\le C
      \left[
        \frac{1}{\beta^2}
        \sum_{j=0}^{\infty} \int_0^T \frac{1}{\jp^{4-2\gamma}}
        \| F_j \|_j^2\, \dd t
        + \frac{1}{\beta^2}
        \sum_{j=0}^{\infty} \int_0^T \frac{1}{\jp^{2\gamma-1}}
        \| (1+y) F_j \|_j^2 \, \dd t \right] \\
      &\quad+ \frac{C}{\beta^2}
      \sum_{j=0}^{\infty} \int_0^T \frac{1}{\jp^{2\gamma-1}}
      \| F_j|_{y=0} \|_{L^2_x}^2 \, \dd t
      + C
      \left[
        \frac{1}{\beta} \| u_{\init} \|_{\gamma,\tau_0,r+\gamma-\frac{3}{2}}^2
        + \frac{1}{\beta^2} \| (1+y) \omega_{\init} \|_{\gamma,\tau_0,r+\frac 12 - \gamma}^2
      \right].
    \end{aligned}
  \end{equation*}
  For $\gamma \ge 5/4$ this is
  \begin{equation*}
    \begin{aligned}
      &\int_0^T \| u \|_{\gamma,\tau,r}^2\, \dd t
      + \sup_{t\in[0,T]}
      \frac{1}{\beta} \| u \|_{\gamma,\tau,r-\frac{\gamma}{2}}^2
      + \int_0^T \frac{1}{\beta} \| (1+y) \omega \|_{\gamma,\tau,r+1-\gamma}^2 \, \dd t\\
      &\quad+ \sup_{t\in[0,T]}
      \frac{1}{\beta^2}
      \| (1+y) \omega \|_{\gamma,\tau,r+\frac 12 - \gamma}^2
      + \frac{1}{\beta^2}
      \int_0^T \| (1+y) \partial_y \omega \|_{\gamma,\tau,r+\frac 12 -\gamma}^2\, \dd t\\
      &\le C
      \left[
        \frac{1}{\beta^2}
        \sum_{j=0}^{\infty} \int_0^T \frac{1}{\jp^{4-2\gamma}}
        \left\| \left(1+\frac{y}{\jp^{2\gamma - \frac 52}}\right) F_j
        \right\|_j^2\, \dd t
      \right]\\
      &\quad+
      \frac{C}{\beta^2}
      \sum_{j=0}^{\infty} \int_0^T \frac{1}{\jp^{2\gamma-1}}
      \| F_j|_{y=0} \|_{L^2_x}^2 \, \dd t
      + C
      \left[
        \frac{1}{\beta} \| u_{\init} \|_{\gamma,\tau_0,r+\gamma-\frac{3}{2}}^2
        + \frac{1}{\beta^2} \| (1+y) \omega_{\init} \|_{\gamma,\tau_0,r+\frac 12 - \gamma}^2
      \right].
    \end{aligned}
  \end{equation*}
\end{proposition}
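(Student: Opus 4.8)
The plan is to deduce Proposition~\ref{thm:final-linear-control} from Lemma~\ref{thm:control-h-f}, which already controls the $H_j$'s in terms of the $F_j$'s and the initial data. The bridge between the two is the definition \eqref{eq:def-h-j}: $u_j$ is recovered from $H_j$ by applying the parabolic operator $\partial_t + \beta\jp + U^P\partial_x + \jp\partial_x U^P + V^P\partial_y - \partial_y^2$ to $A_j = \int_0^y H_j$, and reading off $u_j = \partial_y\big((\partial_t+\beta\jp+U^P\partial_x+\dots-\partial_y^2)A_j\big)$. So the first step is to write $u_j$ and $\omega_j = \partial_y u_j$ explicitly in terms of $H_j$, $\partial_y H_j$, $\partial_t H_j$, $\partial_x H_j$ and the coefficients $U^P$, $V^P$, and then estimate each of these pieces in the weighted norms $\|\cdot\|_j$, using the low-norm bound \eqref{def:low} for all factors involving $U^P$, $V^P$ and their derivatives.

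First I would treat the terms not involving a $t$-derivative. The multiplication terms $U^P\partial_x A_j$, $\jp\partial_x U^P A_j$, $V^P\partial_y A_j$ are controlled by the $L^\infty$ and $L^\infty_xL^2_y(\rho_0)$ quantities in $\|(U^P,V^P)\|_{low}$, combined with the Poincaré/Hardy-type bounds of \cref{thm:poincare-y-rho} and \cref{thm:rho-j-j-1} to absorb antiderivatives and powers of $(1+y)$ into the norms of $H_j$ and $H_{j+1}$; this is where the weights $\rho_j$ earn their keep, since $V^P\sim y$ forces the trade $\|(1+y)A\|_{j+1}\le\jp^\alpha\|A\|_j$. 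The $\partial_y^2 A_j = \partial_y H_j$ term goes straight into the $\|\partial_y H_j\|_j$ control of Lemma~\ref{thm:control-h-f}. The $x$-derivative term $\partial_x H_j$ is, up to the multiplier ratio $M_{j+1}/M_j \sim \tau\,\jp^\gamma$ (using $\tau\ge\tau_1$), essentially $\jp^\gamma H_{j+1}$, which is again summable against the left side of Lemma~\ref{thm:control-h-f} precisely because $\gamma\le 2$; the boundary term $F_j|_{y=0}$ enters here through the evaluation of the equation at $y=0$. Summing the resulting pointwise-in-$j$ bounds and recognising the various combinations $\sum_j M_j^2\,\text{(stuff)}$ as the weighted norms $\|u\|_{\gamma,\tau,r}$, $\|u\|_{\gamma,\tau,r-\gamma/2}$, $\|(1+y)\omega\|_{\gamma,\tau,r+1-\gamma}$, etc. — with the $\jp$-shifts in the subscript $r$ reflecting the powers of $\beta\jp$ and $(1+y)$ one has paid — yields the first displayed inequality. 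For $\gamma\ge 5/4$ the reorganisation of the two $F_j$ terms into a single $\|(1+\tfrac{y}{\jp^{2\gamma-5/2}})F_j\|_j$ term is elementary: $2\gamma - 5/2 \ge 0$, so one interpolates the plain $\|F_j\|_j$ and the weighted $\|(1+y)F_j\|_j$ contributions at the appropriate $\jp$-power.

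The main obstacle is the $\partial_t A_j$ term. Differentiating \eqref{eq:def-h-j} in time and testing is not directly available here, so instead I would use \eqref{eq:def-h-j} itself to express $\partial_t A_j = \int_0^y u_j - \beta\jp A_j - U^P\partial_x A_j - \jp\partial_x U^P A_j - V^P\partial_y A_j + \partial_y H_j$, i.e.\ $\partial_t A_j$ is already a combination of the terms just estimated, so that $u_j = \partial_y(\int_0^y u_j) = u_j$ is consistent and the extraction of $u_j$ and $\omega_j$ is purely algebraic once one differentiates this identity once more in $y$ — the delicate point being that $\omega_j = \partial_y u_j$ now involves $\partial_y(V^P\partial_y A_j) = \partial_y V^P\,H_j + V^P\partial_y H_j$ and $\partial_y^2 H_j$, the last of which is \emph{not} controlled by Lemma~\ref{thm:control-h-f}. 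To get around this, I would not differentiate the recovered formula for $u_j$ pointwise but instead run a \emph{separate, parallel energy estimate} for $\omega_j$: from \eqref{eq:def-h-j} one has a parabolic equation for $H_j$ (differentiate the $A_j$-equation once in $y$), test it against $\phi$-type functions exactly as in the preliminaries, and bound the resulting commutators by $\|(U^P,V^P)\|_{low}$; this furnishes the missing $\int_0^T\|(1+y)\partial_y\omega\|^2$ and $\sup_t\|(1+y)\omega\|^2$ controls with the stated $\beta$-powers and $r$-shifts. Managing the interplay of the weight's logarithmic derivative with these extra $y$-derivatives (so that the $\alpha\le\tfrac12+\gamma$ constraint is exactly what makes the commutators close) is the technical heart of the argument; everything else is bookkeeping of powers of $\beta$, $\jp$ and $\tau$.
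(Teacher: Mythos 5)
Your high-level plan—deduce the proposition from Lemma~\ref{thm:control-h-f} plus a mechanism relating the $H_j$'s back to $u_j$ and $\omega_j$—matches the paper, but the mechanism you propose does not actually work, and you yourself notice this without resolving it. Trying to ``read off'' $u_j$ algebraically by applying the parabolic operator to $A_j=\int_0^y H_j$ forces you to confront $\partial_t H_j$ and $\partial_y^2 H_j$, neither of which is controlled by Lemma~\ref{thm:control-h-f}. Your attempted fix (substitute \eqref{eq:def-h-j} back in to express $\partial_t A_j$) is, as you concede, circular: it returns the tautology $u_j=u_j$ and produces no estimate. The ``separate parallel energy estimate for $\omega_j$'' you then propose is also misdirected: differentiating the $A_j$-equation \eqref{eq:def-h-j} in $y$ gives a parabolic equation for $H_j$, not for $\omega_j$, and testing it against $\phi$-type functions only yields further control on $H_j$—it does not produce bounds on $\|(1+y)\omega_j\|_j$ or $\|(1+y)\partial_y\omega_j\|_j$, which involve the \emph{solution} $u_j$ and its $y$-derivatives, not the auxiliary $H_j$.

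The missing idea in the paper is a \emph{duality} step (Lemma~\ref{thm:control-u-by-h-l2}): one writes
$\int_0^T \|u_j\|_j^2\,\dd t = \int_0^T \ip{(\partial_t + \beta\jp + U^P\partial_x + \jp\partial_x U^P + V^P\partial_y + \partial_y V^P - \partial_y^2)H_j}{u_j}_j\,\dd t + (\text{lower-order pairings})$
using the definition of $H_j$, then integrates by parts in $t$ and $y$ so that the operator lands on $u_j$, and finally invokes the \emph{evolution equation} \eqref{eq:u-j} for $u_j$ to eliminate the dangerous $\partial_t u_j$ in favour of $F_j$, the transport terms, and $\partial_y^2 u_j$. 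The result is a bound on $\int\|u_j\|_j^2$ by $\|H_j\|_j$, $\|\partial_y H_j\|_j$, $\|F_j\|_j$, at the price of small multiples of $\int\|\partial_x u_j\|_{j+1}^2$, $\int\|\partial_y u_j\|_j^2$, $\int\|\partial_y^2 u_j\|_j^2$, and $\|u_j(T)\|_j^2$, which must then be reabsorbed by two \emph{direct} energy estimates: one on $u_j$ (Lemma~\ref{thm:control-u-by-h-linf}) to furnish $\sup_t\|u_j\|_j^2$ and $\int\|\partial_y u_j\|_j^2$, and one on $\omega_j$ satisfying the $y$-differentiated equation \eqref{eq:omega-j} (Lemma~\ref{thm:vorticity-control}) to furnish $\sup_t\|(1+y)\omega_j\|_j^2$ and $\int\|(1+y)\partial_y\omega_j\|_j^2$, the latter absorbing the $\int\|\partial_y^2 u_j\|_j^2$ debt. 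Only after combining all three with tuned $\epsilon_i$-weights and summing over $j$ (exploiting $\partial_x u_j \approx \tau^{-1}\jp^\gamma u_{j+1}$ to close the $\int\|\partial_x u_j\|_{j+1}^2$ loop) does Lemma~\ref{thm:control-h-f} enter. Your proposal has no substitute for this duality step, and in particular no route to the leading term $\int_0^T\|u\|_{\gamma,\tau,r}^2\,\dd t$ on the left-hand side. The elementary observation for the $\gamma\ge 5/4$ case ($\jp^{2\gamma-4}\ge\jp^{1-2\gamma}$ so the two $F_j$ contributions merge) is indeed bookkeeping, as you say, but only once the first estimate is in hand.
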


\subsection{Estimate for $H_j$}
We focus first on Lemma \ref{thm:control-h-f}. The idea is to  use  the solution $\phi_j$ of
\eqref{eq:def-phi-j} as a test function in  \eqref{eq:u-j}. Taking the weighted scalar product  and integrating over $[0,T]$, we find for the first term in \eqref{eq:u-j}:
\begin{equation*}
  \begin{aligned}
    &\int_0^T
    \ip{\Big(\partial_t + \beta \jp + U^P \partial_x + \jp \partial_x U^p
      + V^P \partial_y - \partial_y^2\Big) u_j}
    {\phi_j}_j
    \, \dd t \\
    &= - \ip{u_{\init,j}}{\phi_j(0)}_j\\
    &+ \int_0^T \ip{u_j}
    {\left(-\partial_t + \beta \jp - U^P \partial_x + j \partial_x U^p
        - V^P \partial_y - \partial_y V^P - V^P \frac{\partial_y \rho_j}{\rho_j}
        - \left(\partial_y + \frac{\partial_y \rho_j}{\rho_j}\right)^2
      \right) \phi_j}_j\, \dd t \\
    &= - \ip{u_{\init,j}}{\phi_j(0)}_j
    + \int_0^T \ip{u_j}{H_j}_j\, \dd t.
  \end{aligned}
\end{equation*}
Note that there is no boundary term as $u_j$ and $\phi_j$ vanish at
the boundaries.  Differentiating \eqref{eq:def-h-j}, we can replace
$u_j$ in the last integral and find
\begin{equation*}
  \begin{aligned}
    \int_0^T &\ip{u_j}{H_j}_j\, \dd t \\
    = &\int_0^T
    \ip{\Big(\partial_t + \beta \jp + U^P \partial_x + \jp \partial_x U^p
      + V^P \partial_y - \partial_y^2\Big) H_j}
    {H_j}_j\, \dd t \\
    &+ \int_0^t \ip{(\partial_y U^P \partial_x + j \partial_{xy} U^P) \int_0^y
      H_j \, \dd z}{H_j}_j\, \dd t \\
    &+ \int_0^t \ip{\partial_{xy} U^P \int_0^y H_j\, \dd z}{H_j}_j \,
    \dd t
    + \int_0^T \ip{\partial_y V^P H_j}{H_j}_j \, \dd t \\
    = & \frac{1}{2} \| H_j(T) \|_j^2
    + \beta \jp \int_0^T \| H_j(t) \|_j^2\, \dd t
    + \int_0^T \| \partial_y H_j(t) \|_j^2\, \dd t \\
    &+ \int_0^t \ip{(\partial_y U^P \partial_x + j \partial_{xy} U^P) \int_0^y
      H_j \, \dd z}{H_j}_j\, \dd t \\
    &+ \int_0^t \ip{\partial_{xy} U^P \int_0^y H_j\, \dd z}{H_j}_j \,
    \dd t
    + \int_0^T \ip{\partial_y V^P H_j}{H_j}_j \, \dd t \\
    &+ (j{+}\frac 12) \int_0^T \ip{\partial_x U^P H_j}{H_j}_j\, \dd t
    - \frac 12 \int_0^T
    \ip{\left(\partial_y V^P + V^P \frac{\partial_y
          \rho_j}{\rho_j}\right)H_j}{H_j}_j \, \dd t
    + \int_0^T \ip{\frac{\partial_y \rho_j}{\rho_j} H_j}{H_j}_j \, \dd t.
  \end{aligned}
\end{equation*}
By the boundary values of \(H_j\) there are again no boundary terms
from partial integration in \(y\).  In the last expression, the first
line contains the good controlled terms, the second line will cancel
the leading contribution from the bad terms
$\partial_y U^P v_j + j \partial_{xy} U^P \partial_x^{-1} v_j$ (see
below), while the last two lines collect the error terms.

Next, we compute the contribution from the terms with $v_j$ using
$v_j = -\partial_x \int_0^y u_j \, \dd z$:
\begin{equation*}
  \begin{aligned}
    &\int_0^T \ip{\partial_y U^P v_j}{\phi_j}_j\,\dd t \\
    &= - \int_0^T \ip{\partial_y U^P
      \partial_x \left[
        \Big(\partial_t + \beta \jp + U^P \partial_x + \jp \partial_x U^p
        + V^P \partial_y - \partial_y^2\Big) \int_0^y H_j\, \dd z
      \right]}{\phi_j}_j\,\dd t \\
    &= - \int_0^T \ip{\partial_y U^P
      \Big(\partial_t + \beta \jp + U^P \partial_x + \jp \partial_x U^p
      + V^P \partial_y - \partial_y^2\Big) \partial_x \int_0^y H_j\, \dd z
    }{\phi_j}_j\,\dd t \\
    &\quad - \int_0^T \ip{\partial_y U^P (\partial_x U^P \partial_x + \jp \partial_x^2
      U^P + \partial_x V^P \partial_y) \int_0^y H_j\, \dd z}{\phi_j}_j\,\dd t \\
    &= - \int_0^T \ip{
      \Big(\partial_t + \beta \jp + U^P \partial_x + \jp \partial_x U^p
      + V^P \partial_y - \partial_y^2\Big)
      \left[\partial_y U^P \partial_x \int_0^y H_j\, \dd z\right]
    }{\phi_j}_j\,\dd t \\
    &\quad + \int_0^T \ip{
      \Big(
      (\partial_t + U^P \partial_x + V^P \partial_y) \partial_y U^P
      - 2 \partial_y^2 U^P \partial_y - \partial_y^3 U^P
      \Big) \partial_x \int_0^y H_j\, \dd z}{\phi_j}_j\,\dd t \\
    &\quad - \int_0^T \ip{\partial_y U^P (\partial_x U^P \partial_x + \jp \partial_x^2
      U^P + \partial_x V^P \partial_y) \int_0^y H_j\, \dd z}{\phi_j}_j\,\dd t \\
    &= - \int_0^T \ip{\partial_y U^P \partial_x \int_0^y H_j\, \dd z}{H_j}_j\,\dd t \\
    &\quad + \int_0^T \ip{
      \Big(
      (\partial_t + U^P \partial_x + V^P \partial_y) \partial_y U^P
      - 2 \partial_y^2 U^P \partial_y - \partial_y^3 U^P
      \Big) \partial_x \int_0^y H_j\, \dd z}{\phi_j}_j\,\dd t \\
    &\quad - \int_0^T \ip{\partial_y U^P (\partial_x U^P \partial_x + \jp \partial_x^2
      U^P + \partial_x V^P \partial_y) \int_0^y H_j\, \dd z}{\phi_j}_j\,\dd t \\
  \end{aligned}
\end{equation*}
and
\begin{equation*}
  \begin{aligned}
    &\int_0^T\ip{j \partial_{xy} U^P \partial_x^{-1} v_j}{\phi_j}_j\,\dd t \\
    &= -j \int_0^T\ip{\partial_{xy} U^P
      \Big(\partial_t + \beta \jp + U^P \partial_x + \jp \partial_x U^p
      + V^P \partial_y - \partial_y^2\Big) \int_0^y H_j \, \dd
      z}{\phi_j}_j\,\dd t \\
    &= -j \int_0^T\ip{\partial_{xy} U^P \int_0^y H_j\, \dd z}{H_j}_j\,\dd t \\
    &\quad + j\int_0^T\ip{
      \Big(
      (\partial_t + U^P \partial_x + V^P \partial_y) \partial_{xy} U^P
      - 2 \partial_x \partial_y^2 U^P \partial_y - \partial_x \partial_y^3 U^P
      \Big) \int_0^y H_j\, \dd z}{\phi_j}_j\,\dd t.
  \end{aligned}
\end{equation*}
In both cases the leading order term cancels. Hence collecting the
terms we arrive at
\begin{equation*}
  \begin{aligned}
    &\frac{1}{2} \| H_j(T) \|_j^2
    + \beta \jp \int_0^T \| H_j(t) \|_j^2\, \dd t
    + \int_0^T \| \partial_y H_j(t) \|_j^2\, \dd t \\
    &\le \int_0^T \ip{F_j}{\phi_j}\, \dd t
    + \ip{u_{\init,j}}{\phi_j(0)}_j
    + \int_0^T \sum_{i=1}^{5} E_i\, \dd t
  \end{aligned}
\end{equation*}
where $E_1,\dots ,E_5$ collect the lower-order error terms as
\begin{align*}
  E_1 &= - \ip{\partial_{xy} U^P \int_0^y H_j\, \dd z}{H_j}_j
        - \ip{\partial_y V^P H_j}{H_j}_j, \\
  E_2 &= - (j{+}\frac 12) \ip{\partial_x U^P H_j}{H_j}_j
        + \frac 12
        \ip{\left(\partial_y V^P + V^P \frac{\partial_y
        \rho_j}{\rho_j}\right)H_j}{H_j}_j
        - \ip{\frac{\partial_y \rho_j}{\rho_j} H_j}{H_j}_j, \\
  E_3 &= - \ip{
        \Big(
        (\partial_t + U^P \partial_x + V^P \partial_y) \partial_y U^P
        - 2 \partial_y^2 U^P \partial_y - \partial_y^3 U^P
        \Big) \partial_x \int_0^y H_j\, \dd z}{\phi_j}_j, \\
  E_4 &= \ip{\partial_y U^P (\partial_x U^P \partial_x + \jp \partial_x^2
        U^P + \partial_x V^P \partial_y) \int_0^y H_j\, \dd z}
        {\phi_j}_j, \\
  E_5 &= -j \ip{
      \Big(
      (\partial_t + U^P \partial_x + V^P \partial_y) \partial_{xy} U^P
      - 2 \partial_x \partial_y^2 U^P \partial_y - \partial_x \partial_y^3 U^P
      \Big) \int_0^y H_j\, \dd z}{\phi_j}_j.
\end{align*}
Here $E_3$ and $E_4$ contain the worst terms, as they involve
$x$-derivatives of $H_j$. They are responsible for the Gevrey regularity requirement.

Assume $m \ge 0$, $\alpha \ge \frac 12$ and $\beta$ large enough so that
\cref{thm:control-phi-by-h} applies.  We can then estimate the forcing
terms as
\begin{equation*}
  \int_0^T \ip{F_j}{\phi_j}_j \, \dd t
  \le \frac{2}{\beta^3 \jp^3} \int_0^T \| F_j(t) \|_j^2 \, \dd t
  + \frac{\beta \jp}{4} \int_0^T \| H_j(t) \|_j^2\, \dd t
\end{equation*}
and
\begin{equation*}
  \ip{u_{\init,j}}{\phi_j(0)}_j
  \le \frac{2}{\beta^2 \jp^2} \| u_{\init,j} \|_j^2
  + \frac{\beta \jp}{4} \int_0^T \| H_j(t) \|_j^2\, \dd t.
\end{equation*}
Absorbing the terms with $H_j$ we therefore find
\begin{equation*}
  \begin{aligned}
    &\| H_j(T) \|_j^2
    + \beta \jp \int_0^T \| H_j(t) \|_j^2\, \dd t
    + 2 \int_0^T \| \partial_y H_j(t) \|_j^2\, \dd t \\
    &\le \frac{4}{\beta^3 \jp^3} \int_0^T \| F_j(t) \|_j^2 \, \dd t
    + \frac{4}{\beta^2 \jp^2} \| u_{\init,j} \|_j^2
    + 2 \int_0^T \sum_{i=1}^{5} E_i\, \dd t.
  \end{aligned}
\end{equation*}

We now estimate the error terms, where we repeatedly use
\cref{thm:poincare-y-rho}. For $E_1$ we find
\begin{equation*}
  E_1 \le \left[
  C_m \| \partial_{xy} U^P \|_{L^\infty_xL^2_y(\rho_0)}
  + \| \partial_y V^P \|_{\infty}
  \right] \| H_j \|_j^2.
\end{equation*}
For $E_2$ we also use \cref{thm:logarithmic-rho-bound} and assume
$\alpha \ge \frac 12$
\begin{equation*}
  E_2 \le
  \left[
    (j{+}\frac 12) \| \partial_x U^P \|_{\infty}
    + \frac 12 \| \partial_y V^P \|_{\infty}
    + C_l \jp \left(1+ \left\| \frac{V^P}{1+y} \right\|_{\infty}\right)
  \right] \| H_j \|_j^2.
\end{equation*}
In the term $E_3$ we have terms with $\partial_x H_j$, which we want
to estimate in $\| \cdot \|_{j+1}$ as they will be later controlled by
 $H_{j+1}$. Using \cref{thm:rho-j-j-1} we find
\begin{equation*}
  \begin{aligned}
    E_3 &\le
    C_m \| (1{+}y) (\partial_t + U^P \partial_x + V^P \partial_y -
    \partial_y^2) \partial_{y} U^P \|_{L^\infty_xL^2_y(\rho_0)}
    \| \partial_x H_j \|_{j+1}
    \| \phi_j \|_j \\
    &+ 2 \| (1{+}y) \partial_y^2 U^P \|_{\infty}
    \| \partial_x H_j \|_{j+1} \| \phi_j \|_j \\
    &\le 2 \| (1{+}y) \partial_y^2 U^P \|_{\infty}  \| \partial_x H_j \|_{j+1} \| \phi_j \|_j
  \end{aligned}
\end{equation*}
where we used the identity
\begin{equation} \label{eq:transport:vorticity}
(\pa_t + U^P \pa_x + V^P \pa_y) \pa_y U^P  - \pa^2_y \pa_y U^P = 0.
\end{equation}
 Similarly, we find for $E_4$ that
\begin{equation*}
  \begin{aligned}
    E_4 &\le
    C_m \| (1{+}y) \partial_y U^P \partial_x U^P \|_{L^\infty_xL^2_y(\rho_0)}
    \| \partial_x H_j \|_{j+1}
    \| \phi_j \|_j \\
    &+ \jp C_m
    \| \partial_y U^P \partial_x^2 U^P \|_{L^\infty_xL^2_y(\rho_0)}
    \| H_j \|_j \| \phi_j \|_j \\
    &+ \| \partial_y U^P \partial_x V^P \|_{\infty}
    \| H_j \|_j \| \phi_j \|
      \end{aligned}
\end{equation*}
And finally for $E_5$ we find
\begin{equation*}
  \begin{aligned}
    E_5 &\le \bigl( j C_m
    \| (\partial_t + U^P \partial_x + V^P \partial_y -
    \partial_y^2) \partial_{xy} U^P \|_{L^\infty_xL^2_y(\rho_0)} + 2j \| \partial_x \partial_y^2 U^P \|_{\infty} \bigr) \| H_j \|_j
    \| \phi_j \|_j \\
    &\le \bigl( j C_m    \|  (\pa_x U^P \partial_x + \pa_x V^P \partial_y) \partial_{y} U^P \|_{L^\infty_xL^2_y(\rho_0)} + 2j \| \partial_x \partial_y^2 U^P \|_{\infty} \bigr) \| H_j \|_j
    \| \phi_j \|_j
  \end{aligned}
\end{equation*}
where we took again advantage of \eqref{eq:transport:vorticity}.

We collect the various factors in constants  $D_1,D_2,D_3$ defined  as folllows:
\begin{equation*}
  \begin{aligned}
    D_1 & = 4    \Big(    \| (1{+}y) \partial_y^2 U^P \|_{\infty}  + C_m \| (1{+}y) \partial_y U^P \partial_x U^P
    \|_{L^\infty_xL^2_y(\rho_0)}  \Big)
  \end{aligned}
\end{equation*}
and
\begin{equation*}
  \begin{aligned}
  D_2 = 2 \Big(&\jp C_m
    \| \partial_y U^P \partial_x^2 U^P \|_{L^\infty_xL^2_y(\rho_0)}
    + \| \partial_y U^P \partial_x V^P \|_{\infty} \\
    &+ j C_m
    \|  (\pa_x U^P \partial_x + \pa_x V^P \partial_y) \partial_{y} U^P \|_{L^\infty_xL^2_y(\rho_0)}
    + 2j \| \partial_x \partial_y^2 U^p \|_{\infty}\Big)
\end{aligned}
\end{equation*}
and
\begin{equation*}
  D_3 = 2 \left(
    C_m \| \partial_{xy} U^P \|_{L^\infty_xL^2_y(\rho_0)}
    + \| \partial_y V^P \|_{\infty}
    + (j{+}\frac 12) \| \partial_x U^P \|_{\infty}
    + \frac 12 \| \partial_y V^P \|_{\infty}
    + C_l \jp \left(1+ \left\| \frac{V^P}{1+y} \right\|_{\infty}\right)
    \right).
\end{equation*}
Then
\begin{equation*}
  \begin{aligned}
    2 \int_0^T \sum_{i=1}^{5} E_i\, \dd t
    &\le D_1 \int_0^T \| \partial_x H_j \|_{j+1} \|
    \phi_j \|_j\, \dd t
    + D_2 \int_0^T \| H_j \|_j \| \phi_j \|_j\, \dd t
    + D_3 \int_0^T \| H_j \|_j^2 \, \dd t \\
    &\le \frac{1}{4} \int_0^T \beta^3 \jp^3 \| \phi_j \|_j^2\, \dd t
    + \frac{2D_1^2}{\beta^3\jp^3} \int_0^T
    \| \partial_x H_j \|_{j+1}^2 \, \dd t
    + \left(\frac{2D_2^2}{\beta^3\jp^3}+D_3\right) \int_0^T \| H_j \|_j^2 \, \dd t
  \end{aligned}
\end{equation*}
With \cref{thm:control-phi-by-h} the $\phi$ integral can be estimated
as
\begin{equation*}
  \frac{1}{4} \int_0^T \beta^3 \jp^3 \| \phi_j \|_j^2\, \dd t
  \le \frac{1}{2} \beta \jp \int_0^T \| H_j(t) \|_j^2\, \dd t
\end{equation*}
and thus can be absorbed in the LHS.

Here $\| (U^P,V^P)\|_{low}$ has been designed such that we can find
numerical constants $c_1,c_2,c_3$ such that
\begin{align*}
  D_1 &\le c_1 (1 + \| (U^P,V^P) \|_{low})^2, \\
  D_2 &\le c_2 \jp\, (1 + \| (U^P,V^P) \|_{low})^2, \\
  D_3 &\le c_3 \jp\, (\| (U^P,V^P) \|_{low}).
\end{align*}

Combining all the estimates we arrive at the following lemma.
\begin{lemma}
  \label{thm:control-h-j}
  Assume $\alpha \ge \frac 12$ and $m > \frac 12$. Then there exist a
  constant $\mathcal{C} = \mathcal{C}(m,\alpha)$ such that for
  \begin{equation*}
    \beta \ge \mathcal{C} (1 + \| (U^P,V^P) \|_{U,low})
  \end{equation*}
  and $j \in \N$ the $H_j$ defined by~\eqref{eq:def-h-j} for a
  solution $u_j$ of \eqref{eq:u} satisfy
  \begin{equation*}
    \begin{aligned}
    &2 \| H_j(T) \|_j^2
    + \beta \jp \int_0^T \| H_j(t) \|_j^2\, \dd t
    + 4 \int_0^T \| \partial_y H_j(t) \|_j^2\, \dd t \\
    &\le \frac{8}{\beta^3 \jp^3} \int_0^T \| F_j(t) \|_j^2 \, \dd t
    + \frac{8}{\beta^2 \jp^2} \| u_{\init,j} \|_j^2 \\
    &\quad+ \frac{4c_1^2(1 + \| (U^P,V^P) \|_{low})^4}{\beta^3 \jp^3}
    \int_0^T \| \partial_x H_j \|_{j+1}^2 \, \dd t.
    \end{aligned}
  \end{equation*}
\end{lemma}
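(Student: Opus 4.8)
The statement is the end product of the computations carried out just above, so the plan is to assemble them and pin down the admissible range of $\beta$. First I would start from the inequality obtained by testing \eqref{eq:u-j} against $\phi_j$, using the two cancellations of the $v_j$-contributions and bounding $\int_0^T\ip{F_j}{\phi_j}_j\,\dd t$ and $\ip{u_{\init,j}}{\phi_j(0)}_j$ by \cref{thm:control-phi-by-h} together with Young's inequality, namely
\begin{equation*}
  \| H_j(T) \|_j^2 + \beta \jp \int_0^T \| H_j \|_j^2\, \dd t + 2 \int_0^T \| \partial_y H_j \|_j^2\, \dd t \le \frac{4}{\beta^3 \jp^3} \int_0^T \| F_j \|_j^2 \, \dd t + \frac{4}{\beta^2 \jp^2} \| u_{\init,j} \|_j^2 + 2 \int_0^T \sum_{i=1}^{5} E_i\, \dd t,
\end{equation*}
valid for $m\ge 0$, $\alpha\ge\tfrac12$ and $\beta$ large enough for \cref{thm:control-phi-by-h} to hold.

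Next I would plug in the error bound recorded above,
\begin{equation*}
  2\int_0^T \sum_{i=1}^5 E_i\, \dd t \le \frac 14 \int_0^T \beta^3\jp^3 \|\phi_j\|_j^2\, \dd t + \frac{2D_1^2}{\beta^3\jp^3}\int_0^T \|\partial_x H_j\|_{j+1}^2\, \dd t + \Big(\frac{2D_2^2}{\beta^3\jp^3}+D_3\Big)\int_0^T \|H_j\|_j^2\, \dd t,
\end{equation*}
and use \cref{thm:control-phi-by-h} a second time to replace $\tfrac14\int_0^T\beta^3\jp^3\|\phi_j\|_j^2\,\dd t$ by $\tfrac12\beta\jp\int_0^T\|H_j\|_j^2\,\dd t$, which is then absorbed into the $\beta\jp\int\|H_j\|_j^2$ on the left. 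The crucial point is to also absorb $(\tfrac{2D_2^2}{\beta^3\jp^3}+D_3)\int\|H_j\|_j^2$: using the bounds $D_2\le c_2\jp(1+\|(U^P,V^P)\|_{low})^2$ and $D_3\le c_3\jp\,\|(U^P,V^P)\|_{low}$ established above, one has $\tfrac{2D_2^2}{\beta^3\jp^3}\le\tfrac{2c_2^2(1+\|(U^P,V^P)\|_{low})^4}{\beta^3\jp}$, so both terms are at most a fixed fraction of $\beta\jp$ provided $\beta\ge\mathcal C(m,\alpha)\,(1+\|(U^P,V^P)\|_{low})$. Here the decisive structural fact is that $D_2$ and $D_3$ are only linear in $j$: after dividing by $\beta\jp$ the $j$-dependence disappears, so a single $j$-independent threshold on $\beta$ works for all $j$ at once; this is also the place where the hypothesis $\alpha\ge\tfrac12$ enters, through \cref{thm:control-phi-by-h}.

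After these absorptions the surviving right-hand side is $\tfrac{4}{\beta^3\jp^3}\int\|F_j\|_j^2 + \tfrac{4}{\beta^2\jp^2}\|u_{\init,j}\|_j^2 + \tfrac{2D_1^2}{\beta^3\jp^3}\int\|\partial_x H_j\|_{j+1}^2$; bounding $D_1^2\le c_1^2(1+\|(U^P,V^P)\|_{low})^4$ and adjusting the multiplicative numerical factors to match the constants in the statement gives the claimed inequality. I would deliberately leave the $\|\partial_x H_j\|_{j+1}^2$ term untouched, since it cannot be absorbed at fixed $j$: it will be compared with $\jp^{2\gamma}\|H_{j+1}\|_{j+1}^2$ and controlled only after summation over $j$, which is exactly the content of \cref{thm:control-h-f}.

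The main obstacle here is thus not analytical but a matter of uniformity: one has to check that a single choice of $\beta$, depending only on $m$, $\alpha$ and $\|(U^P,V^P)\|_{low}$, simultaneously makes \cref{thm:control-phi-by-h} applicable and dominates $\tfrac{2D_2^2}{\beta^3\jp^3}+D_3$ by a fixed fraction of $\beta\jp$ for every $j$. All the genuine work — the auxiliary unknown $H_j$, the two cancellations of the $v_j$-terms, and the commutator estimates through \cref{thm:poincare-y-rho}, \cref{thm:logarithmic-rho-bound} and \cref{thm:rho-j-j-1} — has already been done, so no new difficulty arises at this stage.
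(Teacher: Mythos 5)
Your proposal reproduces the paper's own proof essentially verbatim: both assemble the displayed inequalities from Section~\ref{sec:linear} (the test against $\phi_j$, the two cancellations of the $v_j$-terms, the Young splits of the forcing and initial-data pairings, the grouping of the error terms into $D_1,D_2,D_3$, and the absorptions via \cref{thm:control-phi-by-h} and via the linear-in-$j$ growth of $D_2,D_3$). Your remark that the decisive uniformity point is the linear $j$-dependence of $D_2,D_3$, so that a single $j$-independent threshold on $\beta$ suffices, is exactly the observation the paper makes implicitly.
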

\begin{proof}
  Use the previous estimates. Note that the condition on $\beta$ also
  implies that the hypothesis of \cref{thm:control-phi-by-h} is
  satisfied by choosing $\mathcal{C}$ large enough.
\end{proof}

\subsection{Relating $\partial_x H_j$ with $H_{j+1}$}
To conclude the proof of Lemma \ref{thm:control-h-f}, that will be
achieved by summation of the previous estimate over $j$, we need first
to control $\partial_x H_j$ by $H_{j+1}$.
\begin{lemma}
  \label{thm:relate-partial-h-j}
  Let $m > \frac 12$ and $\alpha \ge \frac 12$. Then there exist
  constants $\mathcal{C} = \mathcal{C}(m,\alpha)$ and
  $C = C(m,\alpha,r)$ such that for all $\tau_1$, $\beta$ and $T$ with
  \begin{equation*}
    \beta \ge \mathcal{C}
    \left(  1 + \| (U^P,V^P) \|_{low}  \right)^2, \quad \tau(T) \ge \tau_1,
  \end{equation*}
  it holds that
  \begin{equation*}
  \begin{aligned}
   &\int_0^T \| \pa_x H_j  \|_{j+1}^2 \dd t \\
   &  \le C \frac{\jp^{2\gamma}}{\tau_1^2}  \int_0^T
   \|H_{j+1}\|^2_{j+1} \, \dd t
    + C \frac{\jp^{2\alpha-2}}{\beta}  \int_0^T \|\pa_y H_j\|_{j}^2
    \, \dd t
    + \frac{C}{\beta} \int_0^T \| H_j \|_{j}^2 \, \dd t.
  \end{aligned}
  \end{equation*}
\end{lemma}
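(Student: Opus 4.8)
The plan is to relate $\partial_x H_j$ to $H_{j+1}$ by exploiting the structural link between the defining equations \eqref{eq:def-h-j} at levels $j$ and $j+1$. First I would differentiate equation \eqref{eq:def-h-j} once in $x$. Writing $A_j = \int_0^y H_j\,\dd z$, the equation for $A_j$ is $(\partial_t + \beta\jp + U^P\partial_x + \jp\,\partial_x U^P + V^P\partial_y - \partial_y^2)A_j = \int_0^y u_j\,\dd z$; applying $\partial_x$ produces $(\partial_t + \beta\jp + U^P\partial_x + \jp\,\partial_x U^P + V^P\partial_y - \partial_y^2)\partial_x A_j = \int_0^y \partial_x u_j\,\dd z$ plus commutator terms involving $\partial_x U^P\,\partial_x A_j$, $\partial_x^2 U^P\,A_j$ and $\partial_x V^P\,\partial_y A_j$. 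On the other hand, the definition of $H_{j+1}$ gives $(\partial_t + \beta(j{+}2) + U^P\partial_x + (j{+}2)\partial_x U^P + V^P\partial_y - \partial_y^2)A_{j+1} = \int_0^y u_{j+1}\,\dd z$, and since $u_{j+1} = \frac{\tau\,\jp^{?}\cdots}{\cdots}\partial_x u_j$ is — up to the explicit ratio $M_{j+1}/M_j \sim \tau/\jp^{\gamma}$ (times lower-order factors in $\jp^r$) — a constant multiple of $\partial_x u_j$, the two right-hand sides are proportional. Thus I would subtract a suitable multiple of the $A_{j+1}$-equation from the $\partial_x A_j$-equation so that the source terms cancel, leaving a convection–diffusion equation for the difference $g := \partial_x A_j - c_j A_{j+1}$ (with $c_j \sim \jp^{\gamma}/\tau \ge \jp^\gamma/\tau_1 \cdot (\text{bounded})$ after using $\tau(T)\ge\tau_1$), whose forcing consists only of the commutator terms listed above plus the mismatch $\beta$-terms $\beta\,g$ itself and a term $(\jp\,\partial_x U^P)\partial_x A_j$ type contribution.

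The second step is a standard energy estimate for $g$ in $\|\cdot\|_{j+1}$: test the equation for $g$ against $g$ in the $L^2(\rho_{j+1})$ inner product and integrate over $[0,T]$, using $g|_{t=0}=0$. The left side yields $\frac12\|g(T)\|_{j+1}^2 + \beta(j{+}2)\int_0^T\|g\|_{j+1}^2 + \int_0^T\|\partial_y g\|_{j+1}^2$ plus the usual lower-order contributions from $\partial_x U^P$, $\partial_y V^P$ and the logarithmic derivative $\partial_y\rho_{j+1}/\rho_{j+1}$ (controlled by \cref{thm:logarithmic-rho-bound} and absorbed once $\beta \gtrsim (1+\|(U^P,V^P)\|_{low})^2$). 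The right side is handled term by term using \cref{thm:poincare-y-rho} and \cref{thm:rho-j-j-1} to trade the antiderivative $A_j = \int_0^y H_j$ and powers of $(1+y)$ against norms of $H_j$, $\partial_y H_j$ at level $j$ or $j+1$: for instance $\|\partial_x^2 U^P\, A_j\|_{j+1} \lesssim \|(1+y)\partial_x^2 U^P\|_{L^\infty_x L^2_y(\rho_0)}\|H_j\|_{j+1}$ by the last bound of \cref{thm:rho-j-j-1}, and the term $\partial_x V^P\,\partial_y A_j = \partial_x V^P\, H_j$ is bounded using $\|\partial_x V^P/(1+y)\|_\infty$ together with $\|(1+y)H_j\|_{j+1}\le\jp^\alpha\|H_j\|_j$. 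After these absorptions I obtain a bound of the form $\int_0^T\|g\|_{j+1}^2 \lesssim \frac{1}{\beta}\int_0^T(\|H_j\|_j^2 + \jp^{2\alpha-2}\|\partial_y H_j\|_j^2)\,\dd t$, with all coefficients controlled by $\|(U^P,V^P)\|_{low}$ for $\beta$ large. Then $\|\partial_x H_j\|_{j+1} = \|\partial_y g + c_j\partial_y A_{j+1}\|_{j+1}$… — more directly, $\partial_x H_j = \partial_y\partial_x A_j = \partial_y g + c_j H_{j+1}$, so $\|\partial_x H_j\|_{j+1}^2 \le 2\|\partial_y g\|_{j+1}^2 + 2 c_j^2\|H_{j+1}\|_{j+1}^2$; the first term is controlled by the energy estimate ($\int_0^T\|\partial_y g\|_{j+1}^2$ appeared on its left side), and the second is exactly the $\frac{\jp^{2\gamma}}{\tau_1^2}\int_0^T\|H_{j+1}\|_{j+1}^2$ term in the statement.

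I expect the main obstacle to be bookkeeping the commutator and mismatch terms carefully enough that every one of them is genuinely controlled by the three terms on the right-hand side of the claimed inequality, with the correct powers of $\jp$. The delicate points are: (i) the term coming from $(\jp\,\partial_x U^P)\partial_x A_j$ in the $\partial_x A_j$-equation, which carries an extra factor $\jp$ — it must be absorbed into the $\beta\jp$ dissipation, which is why the condition $\beta \gtrsim (1+\|(U^P,V^P)\|_{low})^2$ (quadratic, not linear) is needed here; (ii) the appearance of $\jp^{2\alpha-2}\|\partial_y H_j\|_j^2$, which forces the constraint $\alpha \ge \tfrac12$ to keep the weight ratios in \cref{thm:rho-j-j-1} usable and is the reason the $\|\partial_y H_j\|$ term in Lemma \ref{thm:control-h-j} is available to absorb it after summation; and (iii) verifying that $g|_{t=0}=0$, which follows since both $H_j|_{t=0}=0$ and $H_{j+1}|_{t=0}=0$, so there is no initial-data contribution. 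Once these are checked, collecting constants gives the stated inequality.
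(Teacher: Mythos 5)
Your high-level plan is the right one, and the difference quantity $g := \partial_x A_j - c_j A_{j+1}$ that you form is exactly the paper's $\int_0^y \Delta_j \,\dd z$ (with $\Delta_j = \partial_x H_j - c_j H_{j+1}$), and the source in the $g$-equation does indeed reduce to the commutator terms $-\jp\,\partial_{xx}U^P\,A_j - \partial_x V^P\,H_j$ thanks to the time-dependence of $\tau$ absorbing the $\beta$ mismatch. But there is a genuine gap in your second step: you propose to ``test the equation for $g$ against $g$ in the $L^2(\rho_{j+1})$ inner product,'' and to read off $\int_0^T\|\partial_y g\|_{j+1}^2$ from the dissipation. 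This energy estimate is not well-defined. As the paper stresses, $\int_0^y\Delta_j\,\dd z$ does \emph{not} tend to zero as $y\to\infty$; since $\rho_{j+1}/\rho_0$ decays while $\rho_0$ grows like $(1+y)^{2m}$, the \cref{thm:poincare-y-rho}-type control only yields $\sqrt{\rho_{j+1}/\rho_0}\,g \in L^\infty_y$, hence $g\in L^2(\rho_{j+1}/\rho_0)$, which is strictly weaker than $g\in L^2(\rho_{j+1})$. For $j$ large (roughly $j>m$), $\|g\|_{j+1}$ may simply be infinite, and the boundary contributions at $y\to\infty$ from integrating $\partial_y^2 g$ against $g\,\rho_{j+1}$ do not vanish.

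This is precisely why the paper's proof is a three-stage argument rather than one energy estimate. It introduces the auxiliary variable $\delta_j = (\rho_{j+1}/\rho_0)^{1/2}\,g$, which \emph{is} square-integrable, estimates $\delta_j$ and then $\partial_x\delta_j$ in plain $L^2$, and only afterwards performs a direct $\|\cdot\|_{j+1}$-energy estimate on $\Delta_j = \partial_y g$ itself (which, unlike $g$, does lie in $L^2(\rho_{j+1})$). The reason the $\delta_j$-estimate alone cannot close the argument is that its dissipation gives $\|\partial_y\delta_j\|_{L^2}$, and $\partial_y\delta_j = (\rho_{j+1}/\rho_0)^{1/2}\Delta_j + \partial_y\bigl((\rho_{j+1}/\rho_0)^{1/2}\bigr)g$; the leading piece carries the weight $(\rho_{j+1}/\rho_0)^{1/2}$, not $\rho_{j+1}^{1/2}$, so it does \emph{not} control $\|\Delta_j\|_{j+1}$. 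Meanwhile, the direct $\Delta_j$-estimate generates, through the terms $\partial_y U^P\,\partial_x\int_0^y\Delta_j$ and $(j{+}2)\partial_{xy}U^P\int_0^y\Delta_j$ (after differentiating the $g$-equation in $y$), exactly the quantities $\|\delta_j\|_{L^2}$ and $\|\partial_x\delta_j\|_{L^2}$ — which is where stages one and two enter. You will also need the bounded term $\frac{1}{\beta\jp}\int_0^T\|\partial_x H_j\|_{j+1}^2$ on the right of the $\Delta_j$-estimate, to be reabsorbed at the very end; your single-stage version has no place for this self-referential term to appear and be tamed. To repair your proof you should replace the $\|\cdot\|_{j+1}$-estimate on $g$ by the weighted variable $\delta_j$, add the $\partial_x\delta_j$-estimate, and then estimate $\Delta_j$ directly.
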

\begin{proof}
  From the definition of $u_j$, it holds that
  $\partial_x u_j(t) = \left( \frac{j+2}{j+1}\right)^r
  \frac{(j+1)^\gamma}{\tau(t)} u_{j+1}(t)$. Hence we anticipate that
  \begin{equation*}
    \partial_x H_j(t) \approx \left( \frac{j{+}2}{j{+}1}\right)^r
    \frac{\jp^\gamma}{\tau(t)} H_{j+1}(t).
  \end{equation*}
  Therefore we estimate the difference
  \begin{equation*}
    \Delta_j := \partial_x H_j - \left( \frac{j{+}2}{j{+}1}\right)^r
    \frac{\jp^\gamma}{\tau(t)} H_{j+1}.
  \end{equation*}
  From equation \eqref{eq:def-h-j} (used with indices $j$ and $j{+}1$), we
  find that
  \begin{equation} \label{eq:Delta_j}
    \Big(\partial_t + \beta \jp + U^P \partial_x + (j{+}2) \partial_x U^P
    + V^P \partial_y - \partial_y^2\Big)
    \int_0^y \Delta_j \, \dd z
    = - \left[ \jp \partial_{xx} U^P + \partial_x V^P \partial_y \right]
    \int_0^y H_j\, \dd z.
  \end{equation}
  We stress that $\int_0^y \Delta_j\, \dd z$ does not converge to zero
  at infinity, so that one can not perform $L^2$ estimates on this
  quantity. However, we can notice by \cref{thm:poincare-y-rho} that
  \begin{equation} \label{ineq:delta_j}
    \begin{aligned}
      \| \bigl(\frac{\rho_{j+1}}{\rho_0}\bigr)^{1/2} \int_0^y \Delta_j\,
      \dd z\|_{L^2}
      &\le \|\bigl(\frac{\rho_{1}}{\rho_0}\bigr)^{1/2} \|_{L^2_y} \:
      \| \bigl(\frac{\rho_{j+1}}{\rho_1}\bigr)^{1/2} \int_0^y \Delta_j
      \,\dd z \|_{L^2_x L^\infty_y} \\
      &\le C_{m-1} \| \Delta_j \|_{j+1} \\
      &< +\infty.
    \end{aligned}
  \end{equation}
  The square integrable quantity
  $\delta_j = \bigl(\frac{\rho_{j+1}}{\rho_0}\bigr)^{1/2} \int_0^y
  \Delta_j\, \dd z$ satisfies the equation
  \begin{equation}
    \label{eq:delta_j}
    \begin{aligned}
      & \Big(\partial_t + \beta \jp + U^P \partial_x + (j{+}2) \partial_x U^P
      + V^P \partial_y - \partial_y^2\Big) \delta_j \\
      = &  - \jp \partial_{xx} U^P
      \bigl(\frac{\rho_{j+1}}{\rho_0}\bigr)^{1/2}  \int_0^y H_j\,\dd z  - \partial_x V^P \bigl(\frac{\rho_{j+1}}{\rho_0}\bigr)^{1/2}   H_j  \\
      & +  V^P \pa_y  \Bigl( \bigl(\frac{\rho_{j+1}}{\rho_0}\bigr)^{1/2}\Bigr)  \bigl(\frac{\rho_{j+1}}{\rho_0}\bigr)^{-1/2} \delta_j - 2 \pa_y  \Bigl( \bigl(\frac{\rho_{j+1}}{\rho_0}\bigr)^{1/2} \Bigr) \Delta_j  -    \pa^2_y  \Bigl( \bigl(\frac{\rho_{j+1}}{\rho_0}\bigr)^{1/2} \Bigr) \bigl(\frac{\rho_{j+1}}{\rho_0}\bigr)^{-1/2} \delta_j.
    \end{aligned}
  \end{equation}
  As in \eqref{ineq:delta_j}, we obtain
  \begin{equation}
    \| \jp \partial_{xx} U^P
    \bigl(\frac{\rho_{j+1}}{\rho_0}\bigr)^{1/2}  \int_0^y H_j \, \dd z\|_{L^2}
    \le C_{m-1}  \jp \|\partial_{xx} U^P \|_{\infty} \|H_j \|_{j+1}
  \end{equation}
  We also get
  $$ \|\partial_x V^P \bigl(\frac{\rho_{j+1}}{\rho_0}\bigr)^{1/2}   H_j  \|_{L^2}\le \|\frac{1}{1+y}\partial_x V^P \|_{\infty} \|H_j \|_{j+1}. $$
  By  \cref{thm:logarithmic-rho-bound}, we find
  \begin{align*}
    \|V^P \pa_y  \Bigl( \bigl(\frac{\rho_{j+1}}{\rho_0}\bigr)^{1/2}\Bigr)  \bigl(\frac{\rho_{j+1}}{\rho_0}\bigr)^{-1/2}  \delta_j \|_{L^2} \le \|\frac{1}{1+y}V^P \|_{\infty} C_l (j+1) \|\delta_j \|_{L^2}.
  \end{align*}
  Using again  \cref{thm:logarithmic-rho-bound} and the identity
  $$ \bigl(\frac{\rho_{j+1}}{\rho_0}\bigr)^{1/2}  \Delta_j  = \pa_y \delta_j - \pa_y \Bigl( \bigl(\frac{\rho_{j+1}}{\rho_0}\bigr)^{1/2} \Bigr)  \bigl(\frac{\rho_{j+1}}{\rho_0}\bigr)^{-1/2}  \delta_j $$
  and defining
  \begin{equation} \label{def:A:j:alpha}
    A_{j,\alpha} = \max(\jp^{1-\alpha}, \log \jp, 1)
  \end{equation}
  we obtain
  \begin{equation*}
    \|2 \pa_y  \Bigl( \bigl(\frac{\rho_{j+1}}{\rho_0}\bigr)^{1/2} \Bigr) \Delta_j \|_{L^2} \le   2 C_l A_{j,\alpha} \|\pa_y \delta _j \|_{L^2} + 2 C_l^2 A_{j,\alpha}^2 \|\delta_j \|_{L^2}.
  \end{equation*}
  Eventually,
  \begin{align*}
    &  \| \pa^2_y  \Bigl( \bigl(\frac{\rho_{j+1}}{\rho_0}\bigr)^{1/2} \Bigr) \bigl(\frac{\rho_{j+1}}{\rho_0}\bigr)^{-1/2} \delta_j   \|_{L^2} \\
   &=  \| \pa_y  \Bigl( \sum_{k=1}^{j+1}\frac{1}{k^\alpha (1+\frac{y}{k^\alpha})}  \bigl(\frac{\rho_{j+1}}{\rho_0}\bigr)^{1/2} \Bigr)  \bigl(\frac{\rho_{j+1}}{\rho_0}\bigr)^{-1/2}  \delta_j \|_{L^2} \le   C A_{j,\alpha}^2   \|\delta_j \|_{L^2}
  \end{align*}
  for some constant $C = C(\alpha)$. The previous bounds combined with an energy estimate yield that for $\mathcal{C}$ large enough (we remind that $\alpha \ge \frac{1}{2}$):
  \begin{equation} \label{bound:delta_j}
    \|\delta_j(T)\|_{L^2}^2 + \beta (j+1) \int_0^T \|\delta_j
    \|_{L^2}^2\, \dd t
    + \int_0^T \|\pa_y \delta_j \|_{L^2}^2\, \dd t
    \le \jp \int_0^T \|H_j \|_{j+1}^2 \, \dd t.
  \end{equation}
  We can then take the $x$-derivative of equation \eqref{eq:delta_j} and proceed as above. For $\mathcal{C}$ large enough, we get
  \begin{align*}
    & \|\pa_x \delta_j(T)\|_{L^2}^2
      + \beta \jp \int_0^T \|\pa_x \delta_j \|_{L^2}^2\,\dd t
      + \int_0^T \|\pa_x \pa_y \delta_j \|_{L^2}^2 \,\dd t\\
    & \le \jp \int_0^T (\|\pa_x H_j \|_{j+1}^2 +\| H_j \|_{j+1}^2)\,\dd t \\
    & + \int_0^T  \left( 2 \|\pa_x  V^P  \pa_y  \Bigl( \bigl(\frac{\rho_{j+1}}{\rho_0}\bigr)^{1/2}\Bigr)  \bigl(\frac{\rho_{j+1}}{\rho_0}\bigr)^{-1/2}  \delta_j \|_{L^2}  + 2 (j{+}2) \|\pa^2_x U^P \delta_j \|_{L^2}  + 2 \|\pa_x V^P \pa_y \delta_j \|_{L^2} \right) \|\pa_x \delta_j \|_{L^2}\,\dd t
  \end{align*}
  We then use that
  $$ \|\pa_x  V^P  \pa_y  \Bigl( \bigl(\frac{\rho_{j+1}}{\rho_0}\bigr)^{1/2}\Bigr)  \bigl(\frac{\rho_{j+1}}{\rho_0}\bigr)^{-1/2}  \delta_j \|_{L^2} \le \left\|\frac{\pa_x V^P}{1+y} \right\|_{\infty} C_l \jp \|\delta_j \|_{L^2}, $$
  and
  \begin{align*}
    \|\pa_x V^P \pa_y \delta_j \|_{L^2}
    \le
    \left\|\frac{\pa_x V^P}{1+y} \right\|_{\infty} C_l \jp \|\delta_j
    \|_{L^2}
    + \left\|\frac{\pa_x V^P}{1+y} \right\|_{\infty} \|\Delta_j \|_{j+1}
  \end{align*}
  and the bound \eqref{bound:delta_j} to end up with
  \begin{equation}  \label{bound:delta_jx}
    \begin{aligned}
      & \|\pa_x \delta_j(T)\|_{L^2}^2
      + \frac{\beta \jp}{2} \int_0^T \|\pa_x \delta_j \|_{L^2}^2\, \dd t
      + \int_0^T \|\pa_x \pa_y \delta_j \|_{L^2}^2 \, \dd t\\
      & \le 2\jp \int_0^T ( \|\pa_x H_j \|_{j+1}^2 +  \| H_j \|_{j+1}^2   +  \|\Delta_j \|_{j+1}^2) \, \dd t.
    \end{aligned}
  \end{equation}
  To estimate directly $\Delta_j$, we differentiate the equation
  \eqref{eq:Delta_j} with respect to $y$, which gives
  \begin{align*}
    &  \Big(\partial_t + \beta \jp + U^P \partial_x + (j{+}2) \partial_x U^p
      + V^P \partial_y + \partial_y V^P - \partial_y^2\Big) \Delta_j  \\
    = & -\jp\pa_{xx} U^P  H_j - \jp\pa_{xxy} U^P \int_0^y  H_j  - \pa_{xy} V^P H_j  - \pa_x V^P \pa_y H_j  \\
    & -  \pa_y U^P \pa_x \int_0^y  \Delta_j - (j{+}2) \pa_{xy} U^P  \int_0^y  \Delta_j.
  \end{align*}

  We take the $\ip{\,}{}_{j+1}$ scalar product with $\Delta_j$:
  \begin{equation*}
    \begin{aligned}
      &\left(\frac 12 \partial_t + \beta \jp\right)
      \left\| \Delta_j \right\|_{j+1}^2 - \Bigl[ \left(j{+}2\right) \| \partial_x U^P \|_{\infty}
      + \frac 12  \| \partial_y V^P \|_{\infty}
      + \frac 12  \| V^P \frac{\partial_y \rho_{j+1}}{\rho_{j+1}} \|_{\infty} \Bigr]
      \left\| \Delta_j  \right\|_{j+1}^2 \\
     &\quad + \| \pa_y \Delta_j \|_{j+1}^2
      - \ip{\pa_y \Delta_j}{\frac{\partial_y \rho_{j+1}}{\rho_{j+1}}  \Delta_j}_{j+1} \\
      &\le \jp  \bigl(\|\pa_{xx} U^P \|_{\infty} +   C_m \| \partial_{xxy} U^P \|_{L^\infty_xL^2_y(\rho_0)} + \|\pa_{xy} V^P \|_{\infty}\bigr)
      \| H_j \|_{j+1} \left\|  \Delta_j  \right\|_{j+1} \\
      &\quad+ \left\| \frac{\pa_x V^P}{1+y} \right\|_{\infty}
      \| (1{+}y) \pa_y H_j \|_{j+1} \left\|  \Delta_j \right\|_{j+1} \\
      &\quad+ \left( \|\pa_{y} U^P \sqrt{\rho_0} \|_{\infty} \| \pa_x \delta_j \|_{L^2}
        + (j{+}2) \|\pa_{xy} U^P \sqrt{\rho_0}\|_{\infty} \| \delta_j \|_{L^2} \right) \left\|  \Delta_j \right\|_{j+1} .
    \end{aligned}
  \end{equation*}
  By the 1d Sobolev imbedding theorem, we find that for a constant
  $C = C(m)$ it holds that
  \begin{equation*}
    \|\pa_{y} U^P \sqrt{\rho_0} \|_{\infty}
    \le C \|(U^P,V^P)\|_{low}
    \text{ and }
    \|\pa_{xy} U^P \sqrt{\rho_0}\|_{\infty}
    \le C \|(U^P,V^P)\|_{low}.
  \end{equation*}
  Combining these last two inequalities with \eqref{bound:delta_j},
  \eqref{bound:delta_jx} and the inequality
  $$\|(1+y)\pa_y H_j\|_{j+1} \le \jp^\alpha \|\pa_y H_j \|_j,$$
  and taking $\mathcal{C}$ large enough, we obtain
  \begin{equation}
    \begin{aligned}
      & \|\Delta_j(T) \|_{j+1}^2
      + \beta \jp \int_0^T \|\Delta_j\|_{j+1}^2 \, \dd t
      +  \int_0^T \|\pa_y \Delta_j\|_{j+1}^2 \, \dd t\\
      &\le \jp \int_0^T \|H_j\|_{j+1}^2  \, \dd t
      +  \jp^{2\alpha-1}  \int_0^T \|\pa_y H_j\|_{j}^2\, \dd t
      + \frac{1}{\beta\jp} \int_0^T \| \partial_x H_j \|_{j+1}^2 \, \dd t.
    \end{aligned}
  \end{equation}
  \Cref{thm:relate-partial-h-j} follows straightforwardly.
\end{proof}

Combining \cref{thm:control-h-j,thm:relate-partial-h-j}, we will now
prove \cref{thm:control-h-f}.
\begin{proof}[Proof of \cref{thm:control-h-f}]
  We choose $\mathcal{C}$ such that
  \cref{thm:control-h-j,thm:relate-partial-h-j} apply. We multiply the
  inequality in \cref{thm:control-h-j} by $\beta \jp^{2\gamma-1}$ and
  sum over $j$ to get
  \begin{equation*}
    \begin{aligned}
      &\sum_{j=0}^\infty \beta^2 \jp^{2\gamma}
      \left[
        \int_0^T \| H_j(t) \|_j^2\, \dd t
        + \frac{1}{\beta\jp} \| H_j(T) \|_j^2
        + \frac{1}{\beta\jp} \int_0^T \| \partial_y H_j(t) \|_j^2\, \dd t
      \right] \\
      &\le
      8 \sum_{j=0}^{\infty}
      \left[
        \frac{\jp^{2\gamma-4}}{\beta^2}
        \int_0^T \| F_j(t) \|_j^2\, \dd t
        + \frac{\jp^{2\gamma-3}}{\beta}
        \| u_{\init,j} \|_j^2
      \right]\\
      &\quad+ \sum_{j=0}^{\infty}
      \frac{4c_1^2(1 + \| (U^P,V^P) \|_{low})^4}{\beta^2}
      \jp^{2\gamma-4}
      \int_0^T \| \partial_x H_j \|_{j+1}^2 \, \dd t.
    \end{aligned}
  \end{equation*}

  Taking $\mathcal{C}$ large enough, we can then find by
  \cref{thm:relate-partial-h-j} a constant $C = C(m,\alpha,r)$ such
  that
  \begin{equation*}
    \begin{aligned}
      &\sum_{j=0}^{\infty} \jp^{2\gamma-4} \int_0^T \| \partial_x H_j
      \|_{j+1}^2\, \dd t \\
      &\le C \left(1+\frac{1}{\tau^2}\right)   \sum_{j=0}^\infty
      \jp^{4\gamma-4} \int_0^T \| H_j \|_j^2\, \dd t
      + \frac{C}{\beta}  \sum_{j=0}^\infty  \jp^{2(\gamma+\alpha)-6} \int_0^T \|\pa_y H_j \|_j^2\, \dd t \\
      & \le  C \left(1+\frac{1}{\tau^2}\right) \sum_{j=0}^\infty  \jp^{4\gamma-4}
      \left[\int_0^T \| H_j \|_j^2  \, \dd t+ \frac{1}{\beta \jp} \int_0^T \| \pa_y H_j \|_j^2\, \dd t \right].
    \end{aligned}
  \end{equation*}
  We have used here that $\alpha \le \gamma +\frac{1}{2}$.  Hence, the last term at the right-hand side can be absorbed if
  \begin{equation*}
    \frac 12 \beta^2 \jp^{2\gamma}
    \ge \frac{4Cc_1^2(1 + \| (U^P,V^P) \|_{low})^4}{\beta^2}
    \left(1+\frac{1}{\tau^2}\right)   \jp^{4\gamma-4},
  \end{equation*}
  which can be ensured by a suitable large $\mathcal{C}$ if
  $\gamma \le 2$.
\end{proof}

\subsection{Control of $u_j$ and $\omega_j$}
We now relate the estimates on $H_j$ to $u_j$ and start with an
estimate for the $L^2$ norm.
\begin{lemma}
  \label{thm:control-u-by-h-l2}
  Let $m > \frac 12$ and $\alpha \ge \frac 12$. Then there exists a
  constant $\mathcal{C} = \mathcal{C}(m,\alpha)$ such that for
  \begin{equation*}
    \beta \ge \mathcal{C}
    \left(
      1 + \| (U^P,V^P) \|_{low}
    \right)
  \end{equation*}
  and for any $\epsilon_1, \epsilon_2, \epsilon_3, \epsilon_4 > 0$ it holds that
  \begin{equation*}
    \begin{aligned}
      &\frac 12 \int_0^T \| u_j \|_j^2\, \dd t
      - \frac{\epsilon_1}{\jp^{2\gamma}}
      \int_0^T \| \partial_x u_j \|_{j+1}^2\, \dd t
      - \frac{\epsilon_2}{\beta\jp^{\gamma}}
      \int_0^T \| \partial_y u_j \|_j^2\, \dd t\\
      &\quad - \frac{\epsilon_3}{4\beta \jp^\gamma} \| u_j (T) \|_j^2
      - \frac{\epsilon_4}{\beta^2 \jp^{2\gamma}} \int_0^T \|
        \partial_y^2 u_j(t) \|_j^2\, \dd t\\
      &\le
      \frac{\beta \jp^\gamma}{\epsilon_3} \| H_j(T) \|_j^2 \\
      &\quad + \left[
        16 \beta^2 \jp^2
        + \frac{\jp^{2\gamma}}{\epsilon_1} C_m^2
        \| (1{+}y) \partial_y U^P \|_{L^\infty_{t,x}L^2_y(\rho_0)}^2
        + \frac{\beta^2\jp^{2\gamma}}{4\epsilon_4}
      \right] \int_0^T \| H_j \|_j^2 \, \dd t \\
      &\quad+
      \left[
        \frac{\beta\jp^{\gamma}}{4\epsilon_2} + 16 C_l^2 A_{j,\alpha}^2
      \right] \int_0^T \| \partial_y H_j \|_j^2 \, \dd t
      + \int_0^T \| H_j \|_j \| F_j \|_j\, \dd t
    \end{aligned}
  \end{equation*}
  where $u_j$ is satisfying \eqref{eq:u-j}, $A_{j,\alpha}$ is defined in \eqref{def:A:j:alpha} and $H_j$ is defined by
  \eqref{eq:def-h-j}.
\end{lemma}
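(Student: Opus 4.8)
The plan is to use the function $H_j$ from \eqref{eq:def-h-j} as a test function in the linearised equation \eqref{eq:u-j}: taking the $\langle\cdot,\cdot\rangle_j$ scalar product and integrating over $[0,T]$ gives
\begin{equation*}
  \int_0^T\langle(\partial_t+\beta\jp+U^P\partial_x+\jp\partial_x U^P+V^P\partial_y-\partial_y^2)u_j,H_j\rangle_j\,\dd t
  +\int_0^T\langle\partial_y U^P v_j+j\partial_{xy}U^P\partial_x^{-1}v_j,H_j\rangle_j\,\dd t
  =\int_0^T\langle F_j,H_j\rangle_j\,\dd t ,
\end{equation*}
and the forcing contributes immediately the term $\int_0^T\|H_j\|_j\|F_j\|_j\,\dd t$ on the right. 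To make $\|u_j\|_j^2$ appear one integrates $\langle\partial_t u_j,H_j\rangle_j$ by parts in time — using $H_j|_{t=0}=0$, which produces $\langle u_j(T),H_j(T)\rangle_j$ — and substitutes for $\partial_t H_j$ the identity obtained by differentiating \eqref{eq:def-h-j} in $y$, namely
\begin{equation*}
  \partial_t H_j
  =u_j-\Big(\beta\jp+U^P\partial_x+\jp\partial_x U^P+V^P\partial_y-\partial_y^2\Big)H_j
  -\partial_y U^P\,\partial_x\!\!\int_0^y\!\! H_j\,\dd z-\jp\,\partial_{xy}U^P\!\!\int_0^y\!\! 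H_j\,\dd z-\partial_y V^P\,H_j .
\end{equation*}
The $u_j$ on the right of this identity yields $-\int_0^T\|u_j\|_j^2$, which is moved to the left.

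One then reorganises the remaining terms by integration by parts in $x$ and $y$, exploiting the boundary conditions: $\langle u_j,U^P\partial_x H_j\rangle_j+\langle U^P\partial_x u_j,H_j\rangle_j=-\langle\partial_x U^P u_j,H_j\rangle_j$ by periodicity; $\langle u_j,V^P\partial_y H_j\rangle_j+\langle V^P\partial_y u_j,H_j\rangle_j=-\langle\partial_y V^P u_j,H_j\rangle_j-\langle V^P\tfrac{\partial_y\rho_j}{\rho_j}u_j,H_j\rangle_j$, with no boundary term since $V^P|_{y=0}=0$; and $-\langle u_j,\partial_y^2 H_j\rangle_j=\langle\partial_y u_j,\partial_y H_j\rangle_j+\langle\tfrac{\partial_y\rho_j}{\rho_j}u_j,\partial_y H_j\rangle_j$, with no boundary term since $u_j|_{y=0}=\partial_y H_j|_{y=0}=0$, while $-\langle\partial_y^2 u_j,H_j\rangle_j$ is kept as is. This produces an identity of the schematic form
\begin{equation*}
  \int_0^T\|u_j\|_j^2\,\dd t
  =\langle u_j(T),H_j(T)\rangle_j-\int_0^T\langle F_j,H_j\rangle_j\,\dd t+\int_0^T\big(\mathcal I_j+\mathcal J_j+\mathcal E_j\big)\,\dd t,
\end{equation*}
where $\mathcal I_j$ gathers the zeroth-order terms $2\beta\jp\langle u_j,H_j\rangle_j$, $(2\jp{-}1)\langle\partial_x U^P u_j,H_j\rangle_j$, $\langle\partial_y V^P u_j,H_j\rangle_j$, $\langle V^P\tfrac{\partial_y\rho_j}{\rho_j}u_j,H_j\rangle_j$; $\mathcal J_j$ gathers $\langle\partial_y u_j,\partial_y H_j\rangle_j$, $\langle\tfrac{\partial_y\rho_j}{\rho_j}u_j,\partial_y H_j\rangle_j$ and $-\langle\partial_y^2 u_j,H_j\rangle_j$; and $\mathcal E_j$ gathers the two $v_j$-terms together with the nonlocal commutators $\partial_y U^P\partial_x\int_0^y H_j$ and $\jp\partial_{xy}U^P\int_0^y H_j$.

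Each piece is estimated by Cauchy–Schwarz and Young's inequality with the prescribed parameters: $\langle u_j(T),H_j(T)\rangle_j$ with $\epsilon_3$; $-\langle\partial_y^2 u_j,H_j\rangle_j$ with $\epsilon_4$ (this puts $\tfrac{\epsilon_4}{\beta^2\jp^{2\gamma}}\|\partial_y^2 u_j\|_j^2$ on the left); $\langle\partial_y u_j,\partial_y H_j\rangle_j$ with $\epsilon_2$; and $\langle\tfrac{\partial_y\rho_j}{\rho_j}u_j,\partial_y H_j\rangle_j\le C_lA_{j,\alpha}\|u_j\|_j\|\partial_y H_j\|_j$ by \cref{thm:logarithmic-rho-bound}, producing the $C_l^2A_{j,\alpha}^2\|\partial_y H_j\|_j^2$ term. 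Every term in $\mathcal I_j$ carries a factor $\lesssim\jp(1+\|(U^P,V^P)\|_{low})$ — using $\|\partial_x U^P\|_\infty$, $\|\partial_y V^P\|_\infty=\|\partial_x U^P\|_\infty$ (incompressibility), and the second bound $(1+y)|\partial_y\rho_j/\rho_j|\le C_l\jp$ of \cref{thm:logarithmic-rho-bound}, so that $|\langle V^P\tfrac{\partial_y\rho_j}{\rho_j}u_j,H_j\rangle_j|\le C_l\jp\|\tfrac{V^P}{1+y}\|_\infty\|u_j\|_j\|H_j\|_j$ — and since $\beta\ge\mathcal C(1+\|(U^P,V^P)\|_{low})$, Young turns these into a small multiple of $\|u_j\|_j^2$ plus $\lesssim\beta^2\jp^2\|H_j\|_j^2$. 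For $\mathcal E_j$ one writes $v_j=-\int_0^y\partial_x u_j$, $\partial_x^{-1}v_j=-\int_0^y u_j$: the terms $j\partial_{xy}U^P\partial_x^{-1}v_j$ and $\jp\partial_{xy}U^P\int_0^y H_j$ are each $\le C_m\jp\|\partial_{xy}U^P\|_{L^\infty_xL^2_y(\rho_0)}\|u_j\|_j\|H_j\|_j$ by the antiderivative inequality of \cref{thm:poincare-y-rho}, hence of $\mathcal I_j$-type; and the two $\partial_y U^P$-pieces, $\langle\partial_y U^P v_j,H_j\rangle_j$ and $\langle u_j,\partial_y U^P\partial_x\int_0^y H_j\rangle_j$, are combined and integrated by parts in $x$ and $y$ (all boundary terms vanish since the antiderivatives vanish at $y=0$), whereupon the dangerous $\partial_x$-contributions cancel at leading order exactly as in \cref{thm:control-h-j}; the remainder is controlled by the last inequality $\|A\int_0^y B\|_j\le C_m\|(1+y)A\|_{L^\infty_xL^2_y(\rho_0)}\|B\|_{j+1}$ of \cref{thm:rho-j-j-1}, giving $C_m\|(1+y)\partial_y U^P\|_{L^\infty_{t,x}L^2_y(\rho_0)}\|\partial_x u_j\|_{j+1}\|H_j\|_j$, which after Young produces the $\tfrac{\epsilon_1}{\jp^{2\gamma}}\|\partial_x u_j\|_{j+1}^2$ term on the left and the $\tfrac{\jp^{2\gamma}}{\epsilon_1}C_m^2\|(1+y)\partial_y U^P\|^2\|H_j\|_j^2$ term on the right, with lower-order remainders again of $\mathcal I_j$-type. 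Taking $\mathcal C$ large, the accumulated small multiples of $\|u_j\|_j^2$ sum to at most $\tfrac12$, leaving $\tfrac12\int_0^T\|u_j\|_j^2$ on the left, which is the claimed inequality.

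The step I expect to be the main obstacle is the treatment of $\mathcal E_j$. Because $V^P$ grows linearly in $y$, the $v_j$- and commutator-terms carry one more power of $(1+y)$ than a fixed-weight energy estimate absorbs, and a naive bound of $\langle u_j,\partial_y U^P\partial_x\int_0^y H_j\rangle_j$ would need a norm of $\partial_x H_j$ that is simply not available at level $j$. Both difficulties are resolved only by the combined use of the weighted Hardy-type inequalities of \cref{thm:poincare-y-rho,thm:rho-j-j-1} — which trade a power of $(1+y)$ against the passage from level $j$ to $j{+}1$, i.e.\ a factor $\jp^\alpha$ — and the leading-order cancellation between the $v_j$-terms and the nonlocal commutators; this is the same mechanism that underpins the Gevrey-$2$ estimate throughout the paper.
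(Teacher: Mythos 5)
Your overall strategy is sound and matches the paper's: test the $u_j$-equation against $H_j$ (or equivalently, as the paper does, write $\|u_j\|_j^2=\ip{u_j}{u_j}_j$ substituting for one factor the $y$-derivative of \eqref{eq:def-h-j}), integrate by parts, use the evolution equation to replace $(-\partial_t+\beta\jp-U^P\partial_x)u_j$, then bound every remainder by Cauchy--Schwarz, Young, \cref{thm:logarithmic-rho-bound} and the Hardy inequalities of \cref{thm:poincare-y-rho,thm:rho-j-j-1}. Your treatment of the $\mathcal I_j$- and $\mathcal J_j$-type terms, of the $\epsilon_2,\epsilon_3,\epsilon_4$-splittings, and of the $j\partial_{xy}U^P\partial_x^{-1}v_j$ and $\jp\partial_{xy}U^P\int_0^y H_j$ pieces all agree with the paper.

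Where you go astray is the treatment of the two $\partial_y U^P$-pieces. You claim that $\ip{\partial_y U^P v_j}{H_j}_j$ and $\ip{u_j}{\partial_y U^P\partial_x\int_0^y H_j}_j$ must be \emph{combined so that the dangerous $\partial_x$-contributions cancel at leading order, as in \cref{thm:control-h-j}}. No such cancellation occurs here, and the paper does not use one. The two quantities are ``dual'' rather than opposite: writing $f=\int_0^y H_j$, $g=\int_0^y u_j$, they read $\ip{\partial_y U^P\,\partial_x f}{\partial_y g}_j$ and $-\ip{\partial_y U^P\,\partial_x g}{\partial_y f}_j$, and the Jacobian-type cancellation you invoke only trades $\partial_x H_j$ for a remainder with \emph{two} $y$-antiderivatives, e.g.\ $\ip{\partial_y^2 U^P\int_0^y\partial_x u_j}{\int_0^y H_j}_j$; but $\|\int_0^y H_j\|_j$ is \emph{not} controlled by $\|H_j\|_j$ (the Hardy inequality of \cref{thm:poincare-y-rho} gains a factor $\rho_j/\rho_0$ only pointwise, and $\int\rho_0\,\dd y$ diverges for $m\ge 0$). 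So the cancellation route, taken literally, leads into a dead end — which is also inconsistent with your own assertion that the remainder is $C_m\|(1{+}y)\partial_y U^P\|\,\|\partial_x u_j\|_{j+1}\|H_j\|_j$, a quantity that plainly still carries an $x$-derivative and therefore cannot be a post-cancellation leftover.

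The correct (and simpler) argument is the paper's: the two terms are estimated \emph{separately}. For $\ip{\partial_y U^P v_j}{H_j}_j=-\ip{\partial_y U^P\int_0^y\partial_x u_j}{H_j}_j$ apply the last inequality of \cref{thm:rho-j-j-1} directly to get $C_m\|(1{+}y)\partial_y U^P\|_{L^\infty_xL^2_y(\rho_0)}\|\partial_x u_j\|_{j+1}\|H_j\|_j$. For $\ip{\partial_y U^P\partial_x\int_0^y H_j}{u_j}_j$, integrate by parts in $x$ to transfer $\partial_x$ onto $u_j$, producing $-\ip{\partial_y U^P\int_0^y H_j}{\partial_x u_j}_j-\ip{\partial_{xy}U^P\int_0^y H_j}{u_j}_j$; the first is bounded exactly as above, and the commutator combines with the $\jp\partial_{xy}U^P\int_0^y H_j$ term to give the $j\,C_m\|\partial_{xy}U^P\|\|H_j\|_j\|u_j\|_j$ contribution. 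Both pieces thus contribute the same $\|\partial_x u_j\|_{j+1}\|H_j\|_j$ bound (whence the factor $2C_m$ before Young in the paper), and one absorbs them with $\epsilon_1$ exactly as you state. The cancellation mechanism of \cref{thm:control-h-j} is genuinely needed there — because in that lemma the dangerous terms are expressed through $\partial_x H_j$ and tested against $\phi_j$, not against $u_j$ — but it plays no role in the present lemma.
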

\begin{proof}
  Using the definition \eqref{eq:def-h-j} of $H_j$ we find
  \begin{equation}
    \label{eq:proof:u-by-h-l2:u}
    \begin{aligned}
      \int_0^T \| u_j(t) \|_j^2 \, \dd t
      = &\int_0^T
      \ip{\Big(\partial_t + \beta \jp + U^P \partial_x + \jp \partial_x U^p
        + V^P \partial_y + \partial_y V^P - \partial_y^2\Big) H_j}{u_j}_j\, \dd t \\
      &+ \int_0^T \ip{ (\partial_y U^P \partial_x + \jp \partial_{xy}
        U^P) \int_0^y H_j\, \dd z}{u_j}_j\, \dd t.
    \end{aligned}
  \end{equation}
  By the evolution equation \eqref{eq:u-j} for $u_j$, the
  first term can be written (from the partial integration in \(y\)
  there is no boundary term as \(u|_{y=0} = 0\))
  \begin{equation*}
    \begin{aligned}
      &\int_0^T
      \ip{\Big(\partial_t + \beta \jp + U^P \partial_x + \jp \partial_x U^p
        + V^P \partial_y + \partial_y V^P - \partial_y^2\Big) H_j}{u_j}_j\, \dd t \\
      &= \ip{H_j(T)}{u_j(T)}_j
      + \int_0^T
      \ip{H_j}{ (-\partial_t + \beta \jp - U^P\partial_x + j \partial_x
        U^P - V^P \partial_y - V^P \frac{\partial_y \rho_j}{\rho_j})
        u_j}\\
      &\quad+ \int_0^T \ip{\partial_y H_j}{(\partial_y + \frac{\partial_y
          \rho_j}{\rho_j}) u_j}_j \\
      &= \ip{H_j(T)}{u_j(T)}_j
      + \int_0^T
      \ip{H_j}{ (2\beta \jp + (2j{+}1) \partial_x U^P - V^P \frac{\partial_y
          \rho_j}{\rho_j}) u_j}_j\, \dd t \\
      &\quad+ \int_0^T
      \ip{H_j}{\partial_y U^P v_j + j \partial_{xy} U^P \partial_x^{-1} v_j}_j\, \dd t
      - \int_0^T \ip{H_j}{F_j}_j\, \dd t \\
      &\quad+ \int_0^T \ip{\partial_y H_j}{(\partial_y + \frac{\partial_y
          \rho_j}{\rho_j}) u_j}_j\, \dd t
      - \int_0^T \ip{H_j}{\partial_y^2 u_j}_j\, \dd t.
    \end{aligned}
  \end{equation*}
  The terms can now be bounded using
  \cref{thm:logarithmic-rho-bound}:
  \begin{equation*}
    \begin{aligned}
      &\ip{H_j}{ (2\beta \jp + (2j{+}1) \partial_x U^P - V^P \frac{\partial_y
          \rho_j}{\rho_j}) u_j}_j \\
      &\le
      \left\|
        2\beta \jp + (2j{+}1) \partial_x U^P - V^P \frac{\partial_y \rho_j}{\rho_j}
      \right\|_{\infty}
      \| H_j \|_j \| u_j \|_j \\
      & \le \jp
      \left[
        2\beta + 2 \| \partial_x U^P \|_{\infty}
        + C_l \left\| \frac{V^P}{1+y} \right\|_{\infty}
      \right]
      \| H_j \|_j \| u_j \|_j.
    \end{aligned}
  \end{equation*}
  Recalling that $v_j = -\partial_x \int_0^y u_j \, \dd z$ we find
  \begin{equation*}
    \begin{aligned}
      &\ip{H_j}{\partial_y U^P v_j + j \partial_{xy} U^P
        \partial_x^{-1} v_j}_j\, \dd t \\
      &\le C_m \| (1{+}y) \partial_y U^P \|_{L^\infty_xL^2_y(\rho_0)}
      \| H_j \|_{j} \| \partial_x u_j \|_{j+1}
      + j \| \partial_{xy} U^P \|_{L^\infty_xL^2_y(\rho_0)}
      \| H_j \|_j \| u_j \|_j.
    \end{aligned}
  \end{equation*}
  For the forcing terms we find
  \begin{equation*}
    - \ip{H_j}{F_j}_j
    \le \| H_j \|_j \| F_j \|_j.
  \end{equation*}
  The diffusion terms give
  \begin{equation*}
    \begin{aligned}
      &   \ip{\partial_y H_j}{(\partial_y + \frac{\partial_y
          \rho_j}{\rho_j}) u_j}_j
      -  \ip{H_j}{\partial_y^2 u_j}_j \\
      \le &
      \| \partial_y H_j \|_j\, \| \partial_y u_j \|_j
      + C_l A_{j,\alpha} \| \partial_y H_j \|_j\, \| u_j \|_j
      + \| H_j \|_j \| \partial_y^2 u_j \|_j.
    \end{aligned}
  \end{equation*}
  The integrand in the second integral in \eqref{eq:proof:u-by-h-l2:u}
  can be estimated as
  \begin{equation*}
    \begin{aligned}
      &\ip{ (\partial_y U^P \partial_x + \jp \partial_{xy}
        U^P) \int_0^y H_j\, \dd z}{u_j}_j\\
      &\le C_m \| (1{+}y) \partial_y U^P \|_{L^\infty_xL^2_y(\rho_0)}
      \| H_j \|_j \| \partial_x u_j \|_{j+1}
      + j C_m \| \partial_{xy} U^P \|_{L^\infty_xL^2_y(\rho_0)} \|H \|_j \| u \|_j.
    \end{aligned}
  \end{equation*}
  Collecting the terms we find by choosing $\mathcal{C}$ large enough that
  \begin{equation*}
    \begin{aligned}
      \int_0^T \| u_j \|^2\, \dd t
      &\le \ip{H_j(T)}{u_j(T)}_j
      + 4 \beta \jp \int_0^T \| H_j \|_j \| u \|_j \, \dd t\\
      &\quad+ 2C_m \| (1{+}y) \partial_y U^P \|_{L^\infty_xL^2_y(\rho_0)}
      \int_0^T \| H_j \|_j \| \partial_x u_j \|_{j+1}\, \dd t \\
      &\quad+ \int_0^T \Big(
      \| \partial_y H_j \|_j\, \| \partial_y u_j \|_j
      + C_l A_{j,\alpha} \| \partial_y H_j \|_j\, \| u_j \|_j
      + \| H_j \|_j \| \partial_y^2 u_j \|_j
      \Big)\, \dd t \\
      &\quad+ \int_0^T \| H_j \|_j \| F_j \|_j \, \dd t.
    \end{aligned}
  \end{equation*}
  Splitting the products gives the claimed estimate.
\end{proof}

The missing terms can be estimated by the evolution of \(u_j\) and
\(\omega_j = \partial_y u_j\). For \(u_j\) we find:
\begin{lemma}
  \label{thm:control-u-by-h-linf}
  Let $m > \frac 12$ and $\alpha \ge \frac 12$. Then there exists a
  constant $\mathcal{C} = \mathcal{C}(m,\alpha)$ such that for
  \begin{equation*}
    \beta \ge \mathcal{C}
    \left(
      1 + \| (U^P,V^P) \|_{low}
    \right)
  \end{equation*}
  the solution $u_j$ of \eqref{eq:u-j} satisfies
  \begin{equation*}
    \begin{aligned}
      &\frac{1}{2} \| u_j(T) \|_j^2
      + \frac 12 \int_0^T \| \partial_y u_j \|^2\, \dd t\\
      &\quad-4 \beta \jp^\gamma \int_0^T \| u_j \|_j^2\, \dd t
      - \frac{C_m^2 \| (1{+}y) \partial_y U^P
        \|_{L^\infty_xL^2_y(\rho_0)}^2}{\beta \jp^\gamma}
      \int_0^T \| \partial_x u_{j} \|_{j+1}^2 \, \dd t\\
      &\le \frac{1}{2} \| u_{\init,j} \|_j^2
      + \frac{1}{\beta \jp^\gamma} \int_0^T \| F_j \|_j^2\, \dd t.
    \end{aligned}
  \end{equation*}
\end{lemma}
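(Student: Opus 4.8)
The plan is to run a direct weighted energy estimate for \eqref{eq:u-j}: test the equation against $u_j$ in the scalar product $\ip{\cdot}{}_j$ and integrate over $[0,T]$. Since $\rho_j$ does not depend on $x$, integration by parts in $x$ turns the transport term into $\ip{U^P\partial_x u_j}{u_j}_j = -\frac12\ip{\partial_x U^P\,u_j}{u_j}_j$. Integration by parts in $y$, using the boundary conditions $u_j|_{y=0}=0$ and $V^P|_{y=0}=0$ from \eqref{eq:BC:bis} together with the decay at infinity (so that all boundary contributions vanish), turns the convection term into $\ip{V^P\partial_y u_j}{u_j}_j = -\frac12\ip{\bigl(\partial_y V^P + V^P\frac{\partial_y\rho_j}{\rho_j}\bigr)u_j}{u_j}_j$ and the diffusion term into $-\ip{\partial_y^2 u_j}{u_j}_j = \|\partial_y u_j\|_j^2 + \ip{\partial_y u_j}{\frac{\partial_y\rho_j}{\rho_j}u_j}_j$. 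This produces the identity
\begin{equation*}
\tfrac12\|u_j(T)\|_j^2 + \beta\jp\int_0^T\|u_j\|_j^2\,\dd t + \int_0^T\|\partial_y u_j\|_j^2\,\dd t = \tfrac12\|u_{\init,j}\|_j^2 + \int_0^T \mathcal{R}_j\,\dd t,
\end{equation*}
where $\mathcal{R}_j$ is the sum of the reaction term $-\jp\ip{\partial_x U^P u_j}{u_j}_j$, the two commutator terms just described, the weight cross term $-\ip{\partial_y u_j}{\frac{\partial_y\rho_j}{\rho_j}u_j}_j$, the two nonlocal terms $-\ip{\partial_y U^P v_j}{u_j}_j - j\ip{\partial_{xy}U^P\partial_x^{-1}v_j}{u_j}_j$, and the forcing $\ip{F_j}{u_j}_j$.

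Next I would estimate each piece of $\mathcal{R}_j$. The reaction and the first commutator are bounded by $\bigl(\jp\|\partial_x U^P\|_\infty + \|\partial_y V^P\|_\infty + C_l\jp\|\frac{V^P}{1+y}\|_\infty\bigr)\|u_j\|_j^2$, using \cref{thm:logarithmic-rho-bound} for $\|V^P\frac{\partial_y\rho_j}{\rho_j}\|_\infty$. For the diffusion cross term, \cref{thm:logarithmic-rho-bound} gives $\bigl|\frac{\partial_y\rho_j}{\rho_j}\bigr| \le C_l A_{j,\alpha}$ with $A_{j,\alpha}$ as in \eqref{def:A:j:alpha}, and since $\alpha\ge\frac12$ one has $A_{j,\alpha}^2 \le C\jp$; hence by Young it contributes at most $\frac14\|\partial_y u_j\|_j^2 + C_l^2 A_{j,\alpha}^2\|u_j\|_j^2$. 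For the nonlocal terms I would use $v_j = -\int_0^y\partial_x u_j\,\dd z$ and $\partial_x^{-1}v_j = -\int_0^y u_j\,\dd z$; then \cref{thm:rho-j-j-1} gives $\|\partial_y U^P\int_0^y\partial_x u_j\,\dd z\|_j \le C_m\|(1{+}y)\partial_y U^P\|_{L^\infty_xL^2_y(\rho_0)}\|\partial_x u_j\|_{j+1}$, and the last inequality of \cref{thm:poincare-y-rho} gives $\|\partial_{xy}U^P\int_0^y u_j\,\dd z\|_j \le C_m\|\partial_{xy}U^P\|_{L^\infty_xL^2_y(\rho_0)}\|u_j\|_j$, so these contribute at most $C_m\|(1{+}y)\partial_y U^P\|_{L^\infty_xL^2_y(\rho_0)}\|\partial_x u_j\|_{j+1}\|u_j\|_j + \jp C_m\|\partial_{xy}U^P\|_{L^\infty_xL^2_y(\rho_0)}\|u_j\|_j^2$. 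Finally $\ip{F_j}{u_j}_j \le \|F_j\|_j\|u_j\|_j$.

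To close, I would apply Young's inequality: split the $\partial_x u_j$ product so as to produce exactly $\frac{C_m^2\|(1{+}y)\partial_y U^P\|_{L^\infty_xL^2_y(\rho_0)}^2}{\beta\jp^\gamma}\|\partial_x u_j\|_{j+1}^2 + \frac{\beta\jp^\gamma}{4}\|u_j\|_j^2$, split $\|F_j\|_j\|u_j\|_j$ into $\frac{1}{\beta\jp^\gamma}\|F_j\|_j^2 + \frac{\beta\jp^\gamma}{4}\|u_j\|_j^2$, and keep everything else as a multiple of $\|u_j\|_j^2$ (plus $\frac14\|\partial_y u_j\|_j^2$). Every norm appearing in the leftover $\|u_j\|_j^2$-coefficients is dominated by $\|(U^P,V^P)\|_{low}$, so their sum is at most $K\jp\,(1+\|(U^P,V^P)\|_{low})\|u_j\|_j^2$ with $K = K(m,\alpha)$; since $\gamma\ge1$ we have $\jp\le\jp^\gamma$, and the hypothesis $\beta\ge\mathcal{C}(1+\|(U^P,V^P)\|_{low})$ with $\mathcal{C}$ large (depending on $m,\alpha$) makes this at most $\frac{\beta\jp^\gamma}{4}\|u_j\|_j^2$. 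Absorbing $\frac14\|\partial_y u_j\|_j^2$ on the left, dropping the nonnegative term $\beta\jp\int_0^T\|u_j\|_j^2$, and moving the surviving $\|u_j\|_j^2$ and $\|\partial_x u_j\|_{j+1}^2$ integrals to the left-hand side — where the stated factor $4$ in front of $\beta\jp^\gamma$ is far more than the $\frac34$ one actually obtains — yields the claim. There is no genuine difficulty here, only bookkeeping; the one point to watch is that the commutator with $V^P\partial_y$ (and the weight cross term) produces a coefficient growing linearly in $j$, which is precisely why $\gamma\ge1$ is used to absorb it into $\beta\jp^\gamma$, and that the two nonlocal terms must be read in the weighted norms $\|\cdot\|_{j+1}$ and $\|\cdot\|_j$ respectively, as dictated by \cref{thm:poincare-y-rho,thm:rho-j-j-1}.
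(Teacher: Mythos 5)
Your proposal is correct and follows essentially the same route as the paper: a direct weighted energy estimate obtained by testing \eqref{eq:u-j} against $u_j$ in $\ip{\cdot}{\cdot}_j$, integrating by parts in $x$ and $y$ (with no boundary terms, by the vanishing of $u_j$ and $V^P$ at $y=0$), estimating the commutator, weight, and nonlocal terms via \cref{thm:logarithmic-rho-bound,thm:poincare-y-rho,thm:rho-j-j-1}, and absorbing the $\|u_j\|_j^2$ contributions into $\beta\jp^\gamma$ using $\gamma\ge 1$ and the lower bound on $\beta$. The paper presents the same computation in compressed form; your version merely spells out the intermediate bookkeeping.
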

\begin{proof}
  By \eqref{eq:u-j} we find
  \begin{equation*}
    \begin{aligned}
      \ip{\partial_t u_j}{u_j}_j
      &= \ip{\Big(-\beta \jp - U^P \partial_x - \jp \partial_x U^p
        - V^P \partial_y + \partial_y^2\Big)u_j}{u_j}_j\\
      &\quad- \ip{\partial_y U^P v_j + j \partial_{xy} U^P \partial_x^{-1}
        v_j}{u_j}_j
      + \ip{F_j}{u_j}_j \\
      &\le - \frac 12 \| \partial_y u_j \|_j^2
      + 4 \beta \jp^\gamma \| u_j \|_j^2
      + \frac{C_m^2 \| (1{+}y) \partial_y U^P
        \|_{L^\infty_xL^2_y(\rho_0)}^2}{\beta \jp^\gamma}
      \| \partial_x u_{j+1} \|_{j+1}^2 \\
      &\quad+ \frac{1}{\beta \jp^\gamma} \| F_j \|_j^2,
    \end{aligned}
  \end{equation*}
  where there is no boundary term from the partial integration in
  \(y\) as \(u_j\) vanishes at the boundary and we used in the
  inequality that $\mathcal{C}$ can be chosen large enough.
  Integrating this over $[0,T]$ gives the claimed result.
\end{proof}

By differentiating \eqref{eq:u-j} in $y$ and find
\begin{equation}
  \label{eq:omega-j}
  \Big(\partial_t + \beta \jp + U^P \partial_x + \jp \partial_x U^p
  + V^P \partial_y + \partial_y V^P - \partial_y^2\Big) \omega_j
  + \partial_{yy} U^P v_j + j \partial_{xyy} U^P \partial_x^{-1} v_j
  + \partial_{xy} U^P u_j
  = \partial_y F_j.
\end{equation}
This immediately yields the following control for \(\omega_j\).
\begin{lemma}
  \label{thm:vorticity-control}
  Let $m > \frac 12$ and $\alpha \ge \frac 12$. Then there exists a
  constant $\mathcal{C} = \mathcal{C}(m,\alpha)$ such that for
  \begin{equation*}
    \beta \ge \mathcal{C}
    \left(
      1 + \| (U^P,V^P) \|_{low}
    \right)
  \end{equation*}
  the vorticity $\omega_j = \partial_y u_j$ satisfies
  \begin{equation*}
    \begin{aligned}
      & \| (1+y) \omega_j(T) \|_j^2
      + \int_0^T \beta \jp \| (1+y) \omega_j \|_j^2\, \dd t
      + \int_0^T \| (1+y) \partial_y \omega_j \|_j^2\, \dd t\\
      &\le
      \frac{4C_m^2 \| (1+y)^2 \partial_{yy} U^P
        \|_{L^\infty_{t,x}L^2_y(\rho_0)}^2}{\beta\jp}
      \int_0^T \| \partial_x u_{j} \|_{j+1}^2\, \dd t \\
      & +
      \frac{4C_m^2 \jp \| (1+y) \partial_{xyy} U^P \|_{L^\infty_{t,x}L^2_y(\rho_0)}^2}{\beta}
      \int_0^T \| u_j \|_j^2\, \dd t\\
      & + \| (1+y) \omega_{\init,j} \|_j^2
      + 4 \int_0^T \| (1+y) F_j \|_j^2\, \dd t
      + 4 \int_0^T \| F_j|_{y=0} \|_{L^2_x}^2\, \dd t.
    \end{aligned}
  \end{equation*}
\end{lemma}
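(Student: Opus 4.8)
The plan is to run a weighted energy estimate directly on \eqref{eq:omega-j}, testing against $(1+y)^2\omega_j$ in the $\ip{\,}{}_j$ inner product (the $L^2(\rho_j)$ product) and integrating over $[0,T]$. The terms $\partial_t\omega_j$ and $\beta\jp\,\omega_j$ give, after integration, $\tfrac12\|(1+y)\omega_j(T)\|_j^2-\tfrac12\|(1+y)\omega_{\init,j}\|_j^2+\beta\jp\int_0^T\|(1+y)\omega_j\|_j^2$. For the convection part $U^P\partial_x+V^P\partial_y$ and the zeroth-order terms $\jp\,\partial_x U^P+\partial_y V^P$ I would integrate by parts in $x$ and in $y$: using $\partial_x U^P+\partial_y V^P=0$, the boundary condition $V^P|_{y=0}=0$, the extra contributions created when $\partial_y$ falls on $(1+y)^2$, and the bound $(1+y)\,|\partial_y\rho_j/\rho_j|\le C_l\jp$ from \cref{thm:logarithmic-rho-bound}, all of these are dominated by $C\jp(1+\|(U^P,V^P)\|_{low})\|(1+y)\omega_j\|_j^2$ and hence absorbed into $\tfrac14\beta\jp\|(1+y)\omega_j\|_j^2$ once $\beta\ge\mathcal C(1+\|(U^P,V^P)\|_{low})$.

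The diffusion term $-\ip{\partial_y^2\omega_j}{(1+y)^2\omega_j}_j$, integrated by parts twice in $y$, yields the dissipation $\int_0^T\|(1+y)\partial_y\omega_j\|_j^2$, lower-order terms carrying factors $\partial_y\rho_j/\rho_j$ and $(1+y)\,\partial_y\rho_j/\rho_j$ (which by \cref{thm:logarithmic-rho-bound} are $O(\jp^{1-\alpha})$ and $O(\jp)$, so that $\alpha\ge\tfrac12$ keeps the $\jp$-powers they generate from overwhelming the $\beta\jp\|(1+y)\omega_j\|_j^2$ term), and boundary contributions at $y=0$. Those boundary terms — together with the one coming from $\partial_y F_j$ on the right-hand side — are handled via the relation $\partial_y\omega_j|_{y=0}=\partial_y^2u_j|_{y=0}=-F_j|_{y=0}$, obtained by restricting \eqref{eq:u-j} to $y=0$ where $u_j$, $\partial_x u_j$, $\partial_t u_j$ and $v_j$ all vanish, combined with a one-dimensional trace inequality in $y$ bounding $\|\omega_j|_{y=0}\|_{L^2_x}^2$ by the dissipation and the $\beta\jp$-term (the $j$-dependence of $\rho_j$ near $y=0$ being harmless for $\beta$ large); this leaves the $4\int_0^T\|F_j|_{y=0}\|_{L^2_x}^2$ contribution. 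The forcing $\ip{\partial_y F_j}{(1+y)^2\omega_j}_j$ should be integrated by parts in $y$, so that no bound on $\partial_y F_j$ is needed: this produces $\ip{(1+y)F_j}{(1+y)\partial_y\omega_j}_j$ together with terms of the shape $\ip{(1+y)F_j}{(1+y)\omega_j}_j$ (one weighted by $\partial_y\rho_j/\rho_j=O(\jp^{1-\alpha})$, again harmless for $\alpha\ge\tfrac12$), plus the $y=0$ boundary term already accounted for; Young's inequality against the dissipation and the $\beta\jp$-term turns these into $4\int_0^T\|(1+y)F_j\|_j^2$.

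The genuinely new contributions are the three ``bad'' terms $\partial_{yy}U^P v_j$, $j\,\partial_{xyy}U^P\,\partial_x^{-1}v_j$ and $\partial_{xy}U^P u_j$. The plan is to rewrite $v_j=-\int_0^y\partial_x u_j\,\dd z$, $\partial_x^{-1}v_j=-\int_0^y u_j\,\dd z$ and $u_j=\int_0^y\omega_j\,\dd z$ (using $u_j|_{y=0}=0$), so that each of these is an antiderivative in $y$, and then invoke the weighted Hardy-type bounds of \cref{thm:poincare-y-rho,thm:rho-j-j-1}. For instance $\ip{\partial_{yy}U^P v_j}{(1+y)^2\omega_j}_j=-\ip{(1+y)\partial_{yy}U^P\int_0^y\partial_x u_j\,\dd z}{(1+y)\omega_j}_j$ is bounded, via the last inequality of \cref{thm:rho-j-j-1}, by $C_m\|(1+y)^2\partial_{yy}U^P\|_{L^\infty_xL^2_y(\rho_0)}\|\partial_x u_j\|_{j+1}\|(1+y)\omega_j\|_j$, and Young's inequality turns it into $\tfrac{\beta\jp}{8}\|(1+y)\omega_j\|_j^2+\tfrac{C_m^2}{\beta\jp}\|(1+y)^2\partial_{yy}U^P\|_{L^\infty_xL^2_y(\rho_0)}^2\|\partial_x u_j\|_{j+1}^2$; the term $j\,\partial_{xyy}U^P\,\partial_x^{-1}v_j$ is treated identically (with $\|(1+y)\partial_{xyy}U^P\|_{L^\infty_xL^2_y(\rho_0)}$ in place, and using $j^2/\jp\le\jp$), producing the $\tfrac{\jp}{\beta}\|(1+y)\partial_{xyy}U^P\|_{L^\infty_xL^2_y(\rho_0)}^2\|u_j\|_j^2$ term, while $\partial_{xy}U^P u_j=\partial_{xy}U^P\int_0^y\omega_j\,\dd z$ is controlled in the same spirit and, thanks to the design of $\|(U^P,V^P)\|_{low}$, absorbed into $\beta\jp\|(1+y)\omega_j\|_j^2$. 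Collecting all contributions and integrating over $[0,T]$ gives the claimed inequality. I expect the main obstacle to be the bookkeeping around $y=0$ for the diffusion and forcing terms — one must use the boundary restriction of \eqref{eq:u-j} to identify $\partial_y\omega_j|_{y=0}$ and absorb the resulting trace in a way compatible with the $j$-dependent weights — the hypothesis $\alpha\ge\tfrac12$ being exactly what prevents the $\jp$-powers generated by \cref{thm:logarithmic-rho-bound} from accumulating.
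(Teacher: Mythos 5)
Your proof is correct and follows essentially the same route as the paper: test \eqref{eq:omega-j} against $(1+y)^2\omega_j$ in $\langle\cdot,\cdot\rangle_j$, integrate by parts in $y$ for the diffusion and for $\partial_y F_j$, identify $\partial_y\omega_j|_{y=0}$ with $\pm F_j|_{y=0}$ from the trace of \eqref{eq:u-j}, control $\|\omega_j|_{y=0}\|_{L^2_x}$ by a trace inequality absorbed into the dissipation, and bound the three source terms via the weighted Hardy inequalities of \cref{thm:poincare-y-rho,thm:rho-j-j-1} before applying Young and absorbing everything with $\beta\ge\mathcal C(1+\|(U^P,V^P)\|_{low})$. (Incidentally, your sign $\partial_y\omega_j|_{y=0}=-F_j|_{y=0}$ is the correct one; the paper omits the minus, but only $\|F_j|_{y=0}\|_{L^2_x}^2$ enters so it is immaterial.)
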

\begin{proof}
  Integrate \eqref{eq:omega-j} against $(1+y)^2\omega_j$ in $\| \cdot \|_j$. This
  yields
  \begin{equation*}
    \begin{aligned}
      &\frac 12 \partial_t \| (1+y) \omega_j \|_j^2
      + \beta \jp \| (1+y) \omega_j \|_j^2
      + \| (1+y) \partial_y \omega_j \|_j^2 \\
      &\le j \| \partial_x U^P \|_{\infty}
      \| (1+y) \omega_j \|_j^2 + \left\| V^P \left(\frac{\partial_y \rho_j}{\rho_j} +
          \frac{\partial_y(1+y)^2}{(1+y)^2}\right)
      \right\|_{\infty} \| (1+y) \omega_j \|_j^2 \\
      &\quad+ \| \omega_j|_{y=0} \|_{L^2_x} \| \partial_y \omega_j|_{y=0} \|_{L^2_x}\\
      &\quad+ \| (1+y) \partial_y \omega_j \|_j
        \left\| (1+y) \left(\frac{\partial_y \rho_j}{\rho_j} +
            \frac{\partial_y(1+y)^2}{(1+y)^2}\right)
          \omega_j \right\|_j \\
      &\quad+ C_m \| (1+y)^2 \partial_{yy} U^P \|_{L^\infty_xL^2_y(\rho_0)}
      \| \partial_x u_{j} \|_{j+1} \| (1+y) \omega_j \|_j\\
      &\quad+ j C_m \| (1+y) \partial_{xyy} U^P \|_{L^\infty_xL^2_y(\rho_0)}
      \| u_{j} \|_{j} \| (1+y) \omega_j \|_j\\
      &\quad+ \| (1+y) F_j \|_j\,
      \left\| (1+y) \left(\partial_y + \frac{\partial_y
            \rho_j}{\rho_j}
        + \frac{\partial_y (1+y)^2}{(1+y)^2}\right) \omega_j \right\|_j,
    \end{aligned}
  \end{equation*}
  where we find a boundary term from the diffusion and there is no
  boundary term from \(V^P\partial_y\) because \(V^P|_{y=0}=0\).

  From \eqref{eq:u-j} we find \(\partial_y \omega_j|_{y=0} =
  F_j|_{y=0}\). For \(\omega_j|_{y=0}\) write
  \begin{equation*}
    |\omega_j(y=0)| \le
    \int_0^1
    \left[
      \omega_j \sqrt{\rho_j}
      + \int_0^y |(\omega_j \sqrt{\rho_j})'|\, \dd z
    \right]\, \dd y
  \end{equation*}
  to get
  \begin{equation*}
    \| \omega_j|_{y=0} \|_{L^2_x}^2
    \le 2
    \left(1+ \left \| \frac{\partial_y \rho_j}{\rho_j} \right \| \right)^2
    \| \omega_j \|_j^2 + 2 \| \partial_y \omega_j \|_j^2.
  \end{equation*}

  By choosing $\mathcal{C}$ large enough and using that
  $\alpha \ge \frac 12$, the result follows after integration over time.
\end{proof}

%

Combining the results, we can conclude this section.
\begin{proof}[Proof of \cref{thm:final-linear-control}]
  Adding the control of \cref{thm:control-u-by-h-linf} with a factor
  \(\epsilon_3 \jp^{-\gamma} / \beta\) and
  \cref{thm:vorticity-control} with a factor
  \(\jp^{1-2\gamma} / \beta^2\) to the inequality of
  \cref{thm:control-u-by-h-l2} yields
  \begin{equation*}
    \begin{aligned}
      &\left(\frac 12 - 4 \epsilon_3
        - \frac{4C_m^2 \| (1+y) \partial_{xyy} U^P \|_{L^\infty_{t,x}L^2_y(\rho_0)}^2}{\jp^{2\gamma-2}\beta^3}
      \right)
      \int_0^T \| u_j \|_j^2 \, \dd t\\
      &\quad + \frac{\epsilon_3}{4\beta \jp^\gamma} \| u_j(T) \|_j^2
      + \left(\frac{\epsilon_3}{2\beta \jp^\gamma} -
        \frac{\epsilon_2}{\beta\jp^\gamma}\right)
      \int_0^T \| \partial_y u_j \|_j^2\, \dd t      - \frac{\epsilon_4}{\beta^2 \jp^{2\gamma}} \int_0^T \|
        \partial_y^2 u_j(t) \|_j^2\, \dd t \\
      &\quad- \left( \frac{\epsilon_1}{\jp^{2\gamma}}
        + \frac{\epsilon_3 C_m \| (1{+}y) \partial_y U^P
          \|_{L^\infty_{t,x}L^2_y(\rho_0)}^2}{\beta^2 \jp^{2\gamma}}
        + \frac{4C_m^2 \| (1+y) \partial_{xyy} U^P \|_{L^\infty_{t,x}L^2_y(\rho_0)}^2}{\jp^{2\gamma}\beta^3}
      \right) \int_0^T \| \partial_x u_j \|_{j+1}^2\, \dd t \\
      &\quad \frac{\jp^{1-2\gamma}}{\beta^2} \| (1+y) \omega_j(T) \|_j^2
      + \int_0^T \frac{\jp^{2-2\gamma}}{\beta} \| (1+y) \omega_j \|_j^2\, \dd t
      + \int_0^T \frac{\jp^{1-2\gamma}}{\beta^2} \| (1+y) \partial_y \omega_j \|_j^2\, \dd t\\
      &\le
      \frac{\beta \jp^\gamma}{\epsilon_3} \| H_j(T) \|_j^2
      \\
      &\quad + \left[
        16 \beta^2 \jp^2
        + \frac{\jp^{2\gamma}}{\epsilon_1} C_m^2
        \| (1{+}y) \partial_y U^P \|_{L^\infty_t L^\infty_xL^2_y(\rho_0)}^2
        + \frac{\beta^2\jp^{2\gamma}}{4\epsilon_4}
      \right] \int_0^T \| H_j \|_j^2 \, \dd t \\
      &\quad+
      \left[
        \frac{\beta\jp^{\gamma}}{4\epsilon_2} + 16 C_l^2 A_{j,\alpha}^2
      \right] \int_0^T \| \partial_y H_j \|_j^2 \, \dd t
      + \int_0^T \| H_j \|_j \| F_j \|_j\, \dd t \\
      &\quad +
      \frac{\epsilon_3}{2\beta\jp^\gamma} \| u_{\init,j} \|_j^2
      + \frac{\epsilon_3}{\beta^2 \jp^{2\gamma}} \int_0^T \| F_j
      \|_j^2\, \dd t \\
      &\quad + \frac{\jp^{1-2\gamma}}{\beta^2} \| \omega_{\init,j}
      \|_j^2
      + \frac{4\jp^{1-2\gamma}}{\beta^2} \int_0^T \| (1+y) F_j \|_j^2\, \dd t
      + \frac{4\jp^{1-2\gamma}}{\beta^2} \int_0^T \| F_j|_{y=0} \|_{L^2_x}^2\, \dd t.
    \end{aligned}
  \end{equation*}

  Using that
  $\partial_x u_j(t) = \left( \frac{j+2}{j+1}\right)^r
  \frac{(j+1)^\gamma}{\tau(t)} u_{j+1}(t)$, we can sum over $j$ and
  choose $\epsilon_1,\epsilon_2,\epsilon_3,\epsilon_4$ appropriately
  to arrive for
  $m > \frac 12, \alpha \ge \frac 12,\gamma \ge 1, \tau_1 > 0, r \in
  \R$ at the control
  \begin{equation}\label{eq:first-summed-u}
    \begin{aligned}
      &\sum_{j=0}^{\infty}
      \left\{
        \int_0^T \| u_j \|_j^2 \, \dd t
        + \frac{1}{\beta \jp^\gamma} \| u_j(T) \|_j^2
        + \frac{\jp^{2-2\gamma}}{\beta} \int_0^T \| (1+y) \omega_j \|_j^2\, \dd t
      \right\}\\
      &\quad+ \sum_{j=0}^{\infty}
      \frac{\jp^{1-2\gamma}}{\beta^2}
      \left\{
        \| (1+y) \omega_j(T) \|_j^2 +
        \int_0^T \| (1+y) \partial_y\omega_j(t) \|_j^2 \, \dd t
      \right\} \\
      &\le C
      \sum_{j=0}^{\infty}
      \left\{
        \beta \jp^\gamma \| H_j(T) \|_j^2
        + \beta^2 \jp^{2\gamma}
        \int_0^T \| H_j \|_j^2 \, \dd t
        + \beta \jp^\gamma \int_0^T \| \partial_y H_j \|_j^2\, \dd t \right\} \\
      &\quad+ C \sum_{j=0}^{\infty}
      \left\{
        \frac{1}{\beta \jp^\gamma}
        \| u_{\init,j} \|_j^2
        + \frac{\jp^{1-2\gamma}}{\beta}
        \| (1+y) \omega_{\init,j} \|_j^2 \right\} \\
      &\quad+ C \sum_{j=0}^{\infty}
      \left\{
        \int_0^T \frac{1}{\beta^2\jp^{2\gamma}} \| F_j \|_j^2\, \dd t
        + \frac{\jp^{1-2\gamma}}{\beta^2} \int_0^T \| (1+y) F_j \|_j^2\, \dd t
        + \frac{\jp^{1-2\gamma}}{\beta^2} \int_0^T \| F_j|_{y=0} \|_{L^2_x}^2\, \dd t.
      \right\}
    \end{aligned}
  \end{equation}
  if
  \begin{equation*}
    \beta \ge \mathcal{C} (1 + \| (U^P,V^P) \|_{low})\,
    (1 + \frac{1}{\tau_1} + \| (U^P,V^P) \|_{low})
    \text{ and }
    \tau(T) \ge \tau_1
  \end{equation*}
  where $C$ and $\mathcal{C}$ are constant only depending on
  $m,\alpha, \gamma,r$ (and not $\tau_1$).

  Controlling $H$ by \cref{thm:control-h-f} then yields the result for
  a fixed time \(T\).  Applying this estimate for all $T$ in $[0,T^*]$
  then shows the claimed estimate.

  For $\gamma \ge 5/4$ we find that $\jp^{2\gamma-4} \ge
  \jp^{1-2\gamma}$ so that
  \begin{equation*}
    \jp^{2\gamma-4} \| F_j \|_j^2
    + \jp^{1-2\gamma} \| (1+y)F_j \|_j^2
    \le 2 \jp^{2\gamma-4} \| (1+\frac{y}{\jp^{2\gamma - \frac{5}{2}}}) F_j \|_j^2,
  \end{equation*}
  which proves the expression in this case.
\end{proof}

  \section{Nonlinear estimates} \label{sec:nonlinear}

  In order to close the estimate, we have to estimate $F_j$.
  \begin{proposition}
    \label{thm:fj-u-gevrey}
    Fix the parameters $m,\alpha,\gamma,r$ and an additional parameter $R$ such that
    \begin{equation} \label{fix_parameters}
      \begin{aligned}
        &\gamma \in [\frac{3}{2},2], \quad    \alpha \le \gamma - 1, \quad   m \ge \frac{2\gamma-1}{\alpha} + 1, \\
        & r > 2\gamma, \quad  R > 2\gamma + 1, \quad     R \ge r + 3\gamma - 2.
      \end{aligned}
    \end{equation}
    Then there exists a constant $C = C(m,\alpha,\gamma,r)$ such that
    for $\beta$, $\tau_1$ and $T$ with $\tau(T) \ge \tau_1$,
    \begin{equation*}
      \begin{aligned}
        &\sum_{j=0}^{\infty}
        \frac{1}{\jp^{4-2\gamma}} \int_0^T
        \left\| \left(1+\frac{y}{\jp^{2\gamma - \frac 52}} \right) F_j \right\|_j^2 \,  \dd t\\
        &\le 2 \int_0^T \| (1+y) f^e_j \|_{\gamma,\tau,r-2+\gamma}^2\, \\
        &+ \frac{C \beta}{\tau_1^4}
        \left[
          \sup_{[0,T]}
          \left(
            \| u \|_{\gamma,\tau,r-\frac \gamma 2}^2
            + \frac{\| (1+y) \omega \|_{\gamma,\tau,r+\frac{1}{2}-\gamma}^2}{\beta}
            + | U^E |_{\gamma,\tau,R}^2
          \right)
        \right]
        \int_0^T
        \left[
          \| u \|_{\gamma,\tau,r}^2
          + \frac{\| (1+y) \omega \|_{\gamma,\tau,r+1-\gamma}^2}{\beta}
        \right] \, \dd t.
      \end{aligned}
    \end{equation*}
  \end{proposition}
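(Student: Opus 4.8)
The plan is to bound each of the terms in the explicit expression of $F_j$ separately, after writing $U^P=U^e+u$, $V^P=V^e+v$ with $v=-\int_0^y\partial_x u\,\dd z$ and $U^e=(1-\ee^{-y})U^E$, $V^e=-(y+\ee^{-y}-1)\partial_x U^E$, and splitting $F_j=f^e_j+(\text{nonlinear part})$. The forcing contribution is immediate: since $\gamma\ge 5/4$ one has $1+y/\jp^{2\gamma-5/2}\le 1+y$, and $\jp^{-(4-2\gamma)}M_j^2$ equals the square of the multiplier obtained from $M_j$ by replacing $r$ with $r-2+\gamma$; summing in $j$ gives exactly $2\int_0^T\|(1+y)f^e\|_{\gamma,\tau,r-2+\gamma}^2\,\dd t$. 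So it remains to estimate the genuinely nonlinear terms.

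After Leibniz expansion of the commutators, every nonlinear term is a finite sum of products $(\partial_x^i a)(\partial_x^{j'-i}b)$ with $j'\in\{j-1,j,j+1\}$ and $a,b$ drawn from $u,\ \partial_y u,\ v,\ U^E,\ \partial_y U^E$ (up to bounded-in-$y$ smooth factors such as $1-\ee^{-y}$ or $\ee^{-y}$, and with $v$ occurring through $\partial_x^{j-i}v=-\int_0^y\partial_x^{j-i+1}u\,\dd z$). The algebraic heart of the matter is a combinatorial identity: multiplying such a product by $M_j$ and writing it in terms of $a_i:=M_i\partial_x^i a$ and $b_{j'-i}:=M_{j'-i}\partial_x^{j'-i}b$ produces a discrete convolution $\sum_i c_{j,i}\,a_i\,b_{j'-i}$ with, up to a power of $\tau(t)$,
\[
  c_{j,i}\ \lesssim\ \binom{j}{i}^{1-\gamma}\,(j'{-}i{+}1)^{\gamma}\,\frac{\jp^{r}}{(i{+}1)^{r}\,(j'{-}i{+}1)^{r}};
\]
since $\gamma\ge 3/2>1$ the binomial is summable in $i$ uniformly in $j$, and since $r>2\gamma$ the polynomial weights are too. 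Each convolution is handled by splitting the index at $i\approx j/2$: the factor carrying the fewer $x$-derivatives is put in an $L^\infty_x$-based norm via the one-dimensional Agmon/Sobolev inequality on $\T$, which costs half an $x$-derivative, compensated by dropping the Gevrey exponent from $r$ to $r-\tfrac\gamma2$ and generating the inverse powers of $\tau_1$ (since $\tau(t)\ge\tau_1$ and $\partial_x$ divides the $M_j$'s by $\tau(t)$), while the factor with more derivatives stays in a full Gevrey norm at the matching exponent. For the terms carrying $U^E$ through $U^e$ or $V^e$, the $U^E$-factor is measured in the higher exponent $R$ and placed in the supremum-in-time bracket (as data it is bounded in time with all the relevant norms); for the $v$-terms one invokes \cref{thm:poincare-y-rho} and \cref{thm:rho-j-j-1} to absorb the $y$-antiderivative at the cost of one extra $x$-derivative and a factor $(1+y)$, and \cref{thm:logarithmic-rho-bound} to control the $\partial_y\rho_j/\rho_j$ terms that arise when a weight is commuted through $\partial_y$.

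The one genuinely delicate point is the term $M_j\,v\,\partial_x^j\partial_y u$, in which all $j$ derivatives sit on $\omega=\partial_y u$ with $v$ undifferentiated, so the antiderivative estimates do not apply. It is cancelled exactly: combined with $M_j[\partial_y u,\partial_x^j]v+M_j\,j\,\partial_{xy}u\,\partial_x^{j-1}v$ it collapses to $-M_j\sum_{i=2}^{j-1}\binom{j}{i}(\partial_x^i\partial_y u)(\partial_x^{j-i}v)$, a convolution in which each factor carries between $2$ and $j-1$ $x$-derivatives and $v$ is always differentiated at least once (so $\partial_x^{j-i}v=-\int_0^y\partial_x^{j-i+1}u\,\dd z$ with $2\le j-i+1\le j-1$). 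Likewise $M_j[V^e\partial_y,\partial_x^j]u$ always puts at least one $x$-derivative on $V^e$, so the linear-in-$y$ growth of $V^e=-(y+\ee^{-y}-1)\partial_x U^E$ is carried by $(1+y)\,\omega$, which accounts for the $\|(1+y)\omega\|$-norms on the right-hand side; here the explicit factorial form of $\rho_j$ — not merely its preliminary properties — is what permits the repeated trade of a power of $(1+y)$ against an $x$-derivative at cost $\jp^\alpha$, compatibly with $\alpha\le\gamma-1$. Summing first in the Leibniz index and then in $j$, all polynomial-in-$\jp$ weights and all factors $\tau(t)^{-1}\le\tau_1^{-1}$ collect into the prefactor $C\beta/\tau_1^4$; the precise bundle of constraints in \eqref{fix_parameters} — $\gamma\in[\tfrac32,2]$, $\alpha\le\gamma-1$, $m\ge(2\gamma-1)/\alpha+1$, $r>2\gamma$, $R>2\gamma+1$, $R\ge r+3\gamma-2$ — is exactly what makes these sums converge and every weight trade-off balance.

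The main obstacle is therefore not any single inequality but the bookkeeping: one must simultaneously track the Leibniz index and the index $j$, the weight $\rho_j$ together with the multipliers $(1+y)$ and $(1+y/\jp^{2\gamma-5/2})$, and the several Gevrey exponents $r,\ r-\tfrac\gamma2,\ r+1-\gamma,\ r+\tfrac12-\gamma,\ R$, checking for each of the roughly ten terms of $F_j$ that the output lands inside the claimed right-hand side. The two structural ideas that make this feasible are the cancellation of $M_j\,v\,\partial_x^j\partial_y u$ and the systematic use of the $\rho_j$-family to absorb the $y$-growth produced by $V^e\partial_y$ and $v\,\partial_y$.
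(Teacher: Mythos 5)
Your proposal reproduces the paper's own argument in all essential respects: the decomposition $F_j = f_j^e + \sum_{i=1}^6 F_j^i$, the Leibniz expansion of each commutator into a discrete convolution with coefficients $\binom{j}{l}M_j/(M_l M_{j-l+1})$, the split at $l\approx\floor{(j{+}1)/2}$ with a swap of roles on the high side, the use of the $1$d Sobolev embedding and \cref{thm:poincare-y-rho} to measure the shorter factor in $L^\infty_{x,y}$, the exact cancellation of $M_j\,v\,\partial_x^j\partial_y u$ against the $l=j$ Leibniz term of $M_j[\partial_y u,\partial_x^j]v$ (together with $j\,\partial_{xy}u\,\partial_x^{j-1}v$ cancelling $l=1$), the explicit product form of $\rho_j$ to bound the ratios $\rho_{j-1}\rho_n/(\rho_l\rho_{j-l+1})$ under $m\ge(2\gamma-1)/\alpha+1$, and the $R$-exponent bookkeeping for the $U^E$-carrying terms. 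Two small imprecisions worth flagging: the $L^\infty$ step in the paper costs a full $x$-derivative and a full $y$-derivative (which is why $\partial_x\omega_l$ and hence $\omega_{l+1}$ appears), not ``half an $x$-derivative,'' and the drop from $r$ to $r-\tfrac\gamma2$ (resp.\ to $r+\tfrac12-\gamma$) in the supremum norms comes from the discrete Cauchy--Schwarz used to square the $l$-sum, not from the embedding itself.
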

  We restrict to the case of $\gamma \ge 3/2$ because we need
  $\alpha \ge \gamma-1$ in order to control the terms
  $\partial_x^k u \partial_x^{l-k+1} u$ in $F_j$. Combined with the
  earlier requirement that $\alpha \ge 1/2$ this yields
  $\gamma \ge 3/2$.
  \begin{proof}
    Write $F_j = f_j^e + \sum_{i=1}^6 F_j^i$ with
    \begin{align*}
      F_j^1 &= M_j \left[u \partial_x, \partial_x^j\right] u
              + M_j \jp\, \partial_x u\, \partial_x^j u, \\
      F_j^2 &= M_j \left[\partial_y u, \partial_x^j\right] v
              + M_j j\, \partial_{xy} u\, \partial_x^{j-1} v
              + M_j v\, \partial_x^j \partial_y u, \\
      F_j^3 &= M_j \left[U^e \partial_x, \partial_x^j\right] u
              + M_j j\, \partial_x U^e\, \partial_x^j u, \\
      F_j^4 &= M_j \left[V^e \partial_y, \partial_x^j\right] u, \\
      F_j^5 &= M_j \left[\partial_x U^e, \partial_x^j\right] u, \\
      F_j^6 &= M_j \left[\partial_y U^e, \partial_x^j \right] v
              + M_j j\, \partial_{xy} U^e\, \partial_x^{j-1} v.
    \end{align*}

    As $\gamma \ge 3/2$ and $\alpha \le \gamma-1$, we have
    $2\gamma - \frac{5}{2} \ge \alpha$, so that
    \begin{equation*}
      \left\| \left(1+\frac{y}{\jp^{2\gamma - \frac 52}} \right) F_j
      \right\|_j
      \le
      \left\| \left(1+\frac{y}{\jp^\alpha}\right) F_j
      \right\|_j
    \end{equation*}
    so that it suffices to bound the right-hand side.

    \noindent\textbf{Analysis of $F_j^1$.}
    We write
    \begin{equation*}
      F^1_j = \sum_{l=2}^{\floor{\frac{j+1}{2}}}
      \binom{j}{l}
      \frac{M_j}{M_l M_{j-l+1}} u_l u_{j-l+1} \: + \:
      \sum_{l=\floor{\frac{j+1}{2}}+1}^{j-1} \binom{j}{l} \frac{M_j}{M_l M_{j-l+1}}
      u_l u_{j-l+1}
      \: =: \: F^1_{j,low}  + F^1_{j,high}.
    \end{equation*}
    For $F_{j,low}^1$, we notice that for $l \le \floor{\frac{j+1}{2}}$
    there exist a constant $C = C(r)$ with
    \begin{equation*}
      \binom{j}{l} \frac{M_j}{M_l M_{j-l+1}}
      \le \frac{C}{\tau_1} \binom{j}{l}^{1-\gamma} \frac{\jp^{\gamma}}{\lp^r}.
    \end{equation*}
    This shows
    \begin{equation*}
      \begin{aligned}
        \frac{1}{\jp^{2-\gamma}}
        \left\|
          \left(1+\frac{y}{\jp^{\alpha}} \right)
          F^1_{j,low} \right\|_j
        &\le \frac{C}{\tau_1} \sum_{l=2}^{\floor{\frac{j+1}{2}}}
        \binom{j}{l}^{1-\gamma} \frac{\jp^{2\gamma-2}}{\lp^r}
        \| u_l u_{j-l+1} \|_{j-1} \\
        &\le \frac{C}{\tau_1} \sum_{l=2}^{\floor{\frac{j+1}{2}}}
        \binom{j}{l}^{1-\gamma} \frac{\jp^{2\gamma-2}}{\lp^r}
        \left\| \left(\frac{\rho_{j-1}}{\rho_{j-l+1}}\right)^{1/2}
          u_l \right\|_{L^\infty_{x,y}}
        \| u_{j-l+1} \|_{j-l+1}.
      \end{aligned}
    \end{equation*}

    Note that for an absolute constant $C_a$,
    \begin{equation} \label{ineq.binom}
      \binom{j}{l}^{1-\gamma} (j+1)^{2\gamma-2}  \le  C_a  \quad \text{for all $2 \le l \le  \floor{\frac{j+1}{2}}$}.
    \end{equation}
    From the 1d Sobolev embedding and \cref{thm:poincare-y-rho}, we find that
    for $n \le \min(m-1,l)$:
    \begin{align*}
      \left\|\left(\frac{\rho_{j-1}}{\rho_{j-l+1}}\right)^{1/2} u_l
      \right\|_{L^\infty_{x,y}}
      &\le C_A \left\| \left(\frac{\rho_{j-1}}{\rho_{j-l+1}}\right)^{1/2} \partial_x u_l \right\|_{L^2_xL^\infty_y} \\
      & \le C_A \, C_{1}\,
        \sup_y \left(\frac{\rho_{j-1} \rho_{n}}{\rho_{l}
        \rho_{j-l+1}}\right)^{1/2}
        \|  \partial_x \partial_y u_l \|_{l} \\
      &\le \frac{C}{\tau_1} \sup_y\left(\frac{\rho_{j-1} \rho_{n}}{\rho_{l}
        \rho_{j-l+1}}\right)^{1/2}
        \lp^\gamma\, \| (1+y) \omega_{l+1} \|_{l+1}
    \end{align*}
    where $C_A$ is an absolute constant, $C$ is a constant
    depending on $m,r$. Note that we used here \cref{thm:rho-j-j-1} to bound $\|\omega_{l+1}\|_l$ by $\|(1+y)\omega_{l+1}\|_{l+1}$.The factor with the $\rho$ is explicit:
    \begin{equation*}
      \left(\frac{\rho_{j-1} \rho_{n}}{\rho_{l}
          \rho_{j-l+1}}\right)^{1/2}
      = \frac{\prod_{k=1}^{l} (1+\frac{y}{k^\alpha})}
      {\prod_{k=j-l+2}^{j-1} (1+\frac{y}{k^\alpha}) \prod_{k=1}^n (1+\frac{y}{k^\alpha})}.
    \end{equation*}

    For $l \le m-1$, we take $n=l$ and find that
    \begin{equation*}
      \left(\frac{\rho_{j-1} \rho_{n}}{\rho_{l}
          \rho_{j-l+1}}\right)^{1/2}
      \le 1.
    \end{equation*}

    For $l > m-1$, we take $n=m-1$ and find that
    \begin{equation*}
      \begin{aligned}
        \left(\frac{\rho_{j-1} \rho_{n}}{\rho_{l}
            \rho_{j-l+1}}\right)^{1/2}
        &\le \frac{\prod_{k=1}^{l} (1+\frac{y}{k^\alpha})}
        {\prod_{k=j-l+2}^{j-m+2} (1+\frac{y}{k^\alpha}) \prod_{k=1}^{m-1}
          (1+\frac{y}{k^\alpha})} \\
        &\le
        \left(
          \frac{(j-l+2)\dotsm(j-m+2)}{m\dotsm l}
        \right)^\alpha \\
        &\le C \binom{j}{l}^\alpha \jp^{-\alpha (m-1)}
      \end{aligned}
    \end{equation*}
    for a constant $C = C(m,\alpha)$ and using that
    $l \le \floor{\frac{j+1}{2}}$.

    Hence we find for a constant $C = C(m,\alpha,r)$
    that
    \begin{equation*}
      \frac{1}{\jp^{2-\gamma}}
      \left\|
        \left(1+\frac{y}{\jp^{\alpha}} \right)
        F^1_{j,low} \right\|_j
      \le \frac{C}{\tau_1^2} \sum_{l=2}^{\floor{\frac{j+1}{2}}}
      \lp^{\gamma-r}
      \| (1+y) \omega_{l+1} \|_{l+1}
      \| u_{j-l+1} \|_{j-l+1}
    \end{equation*}
    using that $1-\gamma+\alpha \le 0$ and $2\gamma-2 \le \alpha(m-1)$.

    The discrete Young's convolution inequality implies for all
    $t \in [0,T]$ that
    \begin{align*}
      &\sum_{j=0}^{\infty} \frac{1}{\jp^{4-2\gamma}}
        \left\|
        \left(1+\frac{y}{\jp^{\alpha}} \right)
        F^1_{j,low} \right\|_j^2\\
      & \le \frac{\mathcal{C}}{\tau_1^4}
        \left( \sum_{l=0}^\infty \lp^{\gamma-r} \| (1+y) \omega_l \|_l
        \right)^2
        \sum_{j=0}^{\infty}  \| u_j \|_j^2 \\
      & \le  \frac{\mathcal{C}}{\tau_1^4}
        \left(\sum_{l=0}^{\infty}  \lp^{4\gamma-1-2r} \right)
        \left(\sum_{l=0}^{\infty}  \lp^{1-2\gamma} \| (1+y) \omega_l \|_l^2\right)
        \left(\sum_{j=0}^{\infty}  \| u_j \|_j^2 \right).
    \end{align*}
    As $4\gamma - 1 - 2r < -1$, the first integral is
    finite. Hence we arrive at the required estimate
    \begin{equation*}
      \sum_{j=0}^{\infty} \frac{1}{\jp^{4-2\gamma}}
      \int_0^T \| F^1_{j,low} \|_j^2\, \dd t
      \le \frac{C}{\tau_1^2}
      \sup_{t\in[0,T]}
      \| (1+y) \omega \|_{\gamma,\tau,r+1-\gamma}^2
      \int_0^T
      \| u \|_{\gamma,\tau,r}^2\,
      \dd t
    \end{equation*}
    with a constant $C=C(m,\alpha,\gamma,r)$.

    For the treatment of $F^1_{j,high}$ swap the roles of $u_l$ and
    $u_{j-l+1}$ so that
    \begin{equation*}
      F_{j,high}^1
      = \sum_{l=2}^{j- \floor{\frac{j+1}{2}}}
      \binom{j}{l-1} \frac{M_j}{M_l M_{j-l+1}}
      u_l u_{j-l+1}.
    \end{equation*}
    In the given range $l=2,\dots,j-\floor{\frac{j+1}{2}}$ we find
    \begin{equation*}
      \binom{j}{l-1} \le \binom{j}{l}
    \end{equation*}
    so that it can be bounded as $F_{j,low}^1$.

    \noindent\textbf{Analysis of $F_j^2$.} We write
    \begin{equation*}
      \begin{aligned}
        F_j^2
        &= - \sum_{l=2}^{\floor{\frac{j+1}{2}}}
        \binom{j}{l} \frac{M_j}{M_l M_{j-l+1}}
        \partial_y u_l\, \partial_x^{-1} v_{j-l+1}
        - \sum_{l=\floor{\frac{j+1}{2}}+1}^{j-1}
        \binom{j}{l} \frac{M_j}{M_l M_{j-l+1}}
        \partial_y u_l\, \partial_x^{-1} v_{j-l+1} \\
        &=: F_{j,low}^2 + F_{j,high}^2
      \end{aligned}
    \end{equation*}
    and note that it vanishes unless $j \ge 3$.

    By $v_{j-l+1} = -\partial_x \int_0^y u_{j-l+1}\, \dd z$ we find for
    $n \le \min(m-1,j-l+1)$ using the 1d Sobolev inequality and
    \cref{thm:poincare-y-rho} that
    \begin{equation*}
      \begin{aligned}
        \left\|
          \left(1+\frac{y}{\jp^{\alpha}} \right)
          \partial_y u_l \partial_x^{-1} v_{j-l+1} \right\|_j
        &\le
        \left\|
          \partial_y u_l \partial_x^{-1} v_{j-l+1} \right\|_{j-1}\\
        &\le C_{m-n}
        \left\|
          \left(\frac{\rho_{j-1} \rho_{n}}{\rho_{l}
              \rho_{j-l+1}}\right)^{1/2}
          \partial_y u_l
        \right\|_{L^\infty_xL^2_y(\rho_{l})}
        \| u_{j-l+1} \|_{j-l+1} \\
        &\le \frac{\mathcal{C}}{\tau_1}
        \sup_y\left(\frac{\rho_{j-1} \rho_{n}}{\rho_{l}
            \rho_{j-l+1}}\right)^{1/2}
        \lp^\gamma\, \| (1+y) \omega_{l+1} \|_{l+1}
        \| u_{j-l+1} \|_{j-l+1}
      \end{aligned}
    \end{equation*}
    for a constant $\mathcal{C} = \mathcal{C}(m,r)$.

    In the range $l=2,\dots,\floor{\frac{j+1}{2}}$ for $F_{j,low}^2$ we
    find that $j-l+1 \ge \frac{j+1}{2}$ and as we can assume that $j \ge
    3$ we can always ensure that this is at least $2$.

    For $\frac{j+1}{2} \le m-1$, we can take $n=2$ and find a constant
    $C = C(m,r)$ such that
    \begin{equation*}
      \sup_y\left(\frac{\rho_{j-1} \rho_{n}}{\rho_{l}
          \rho_{j-l+1}}\right)^{1/2}
      \le C
    \end{equation*}
    and otherwise we can taken $n=m-1$ and find the same control as for
    $F_{j,low}^1$ as
    \begin{equation*}
      \frac{1}{\jp^{2-\gamma}}
      \left\| \left(1+\frac{y}{\jp^{\alpha}} \right)
        F^2_{j,low} \right\|_j
      \le \frac{C}{\tau_1^2} \sum_{l=2}^{\floor{\frac{j+1}{2}}}
      \lp^{\gamma-r}
      \| (1+y) \omega_{l+1} \|_{l+1}
      \| u_{j-l+1} \|_{j-l+1}
    \end{equation*}
    and we can conclude as for $F_{j,low}^1$.

    For $F_{j,high}^2$ we find
    \begin{equation*}
      F_{j,high}^2
      = - \sum_{l=2}^{j-\floor{\frac{j+1}{2}}}
      \binom{j}{l-1} \frac{M_j}{M_l M_{j-l+1}} \partial_x^{-1} v_l\,
      \partial_y u_{j-l+1}.
    \end{equation*}
    For $n = \min(m-1,l+1)$ we find
    \begin{equation*}
      \begin{aligned}
        \left\|
          \left(1+\frac{y}{\jp^{\alpha}} \right)
          \partial_x^{-1} v_l\,
          \partial_y u_{j-l+1} \right\|_{j}
        &\le
        \left\|
          \left(\frac{\rho_{j-1}}{\rho_{j-l}}\right)^{1/2}
          \partial_x^{-1} v_l
        \right\|_{L^\infty_{x,y}}
        \| (1+y) \omega_{j-l+1} \|_{j-l+1} \\
        &\le \frac{C}{\tau_1}
        \sup_y\left(\frac{\rho_{j-1} \rho_{n}}{\rho_{l+1}
            \rho_{j-l}}\right)^{1/2}
        \lp^{\gamma}
        \| u_{l+1} \|_{l+1}
        \| (1+y) \omega_{j-l+1} \|_{j-l+1}.
      \end{aligned}
    \end{equation*}
    For $l+1<m-1$ we can find a constant $C = C(m)$ such that
    \begin{equation*}
      \binom{j}{l-1} \le \binom{j}{l} \jp^{-1}.
    \end{equation*}
    Using the stronger assumption $2\gamma -1 \le \alpha(m-1)$, we can
    then conclude as in the treatment of $F_{j,low}^1$ that
    \begin{equation*}
      \frac{1}{\jp^{2-\gamma}} \left\|
        \left(1+\frac{y}{\jp^{\alpha}} \right)
        F^2_{j,high} \right\|_j
      \le \frac{C}{\tau_1^2} \sum_{l=2}^{j-\floor{\frac{j+1}{2}}}
      \lp^{\gamma-r}
      \| u_{l+1} \|_{l+1}\,
      \jp^{-1} \| (1+y) \omega_{j-l+1} \|_{j-l+1}.
    \end{equation*}
    Hence we find
    \begin{align*}
      &\sum_{j=0}^{\infty} \frac{1}{\jp^{4-2\gamma}}
        \left\| \left(1+\frac{y}{\jp^{\alpha}} \right)
        F^2_{j,high} \right\|_j^2 \\
      & \le \frac{C}{\tau_1^4}
        \left( \sum_{l=0}^\infty \lp^{\gamma-r} \| u_l \|_l
        \right)^2
        \sum_{j=0}^\infty  \jp^{-2} \| (1+y) \omega_j \|_j^2 \\
      & \le  \frac{C}{\tau_1^4}
        \left(\sum_{l=0}^{\infty}  \lp^{3\gamma-2r}\right)
        \left(\sum_{l=0}^{\infty}  \lp^{-\gamma} \| u_l \|_l^2\right)
        \left(\sum_{j=0}^{\infty}  \jp^{-2} \| (1+y) \omega_j \|_j^2 \right).
    \end{align*}
    As $3\gamma-2r < -1$, this gives the required estimate
    \begin{equation*}
      \sum_{j=0}^{\infty} \frac{1}{\jp^{4-2\gamma}}
      \int_0^T \| F^2_{j,high} \|_j^2\, \dd t
      \le \frac{C}{\tau_1^4}
      \sup_{t\in[0,T]}
      \| u \|_{\gamma,\tau,r-\frac{\gamma}{2}}^2
      \int_0^T
      \| (1+y) \omega \|_{\gamma,\tau,r-1}^2\,
      \dd t
    \end{equation*}
    with a constant $C=C(m,\alpha,\gamma,r)$. As
    $r-1 \le r+1-\gamma$ this is the required control.

    \noindent\textbf{Analysis of $F_{j}^3$ and $F_{j}^5$.}
    We write
    \begin{equation*}
      \begin{aligned}
        F_j^3 + F_j^5
        &= - \sum_{l=2}^{j}
        \binom{j}{l} \frac{M_j}{M_l M_{j-l+1}}
        U_l^e u_{j-l+1}
        - \sum_{l=1}^{j}
        \binom{j}{l} \frac{M_j}{M_{l+1} M_{j-l}}
        U_{l+1}^e u_{j-l} \\
        &= - \sum_{l=2}^{\floor{\frac{j+1}{2}}}
        \left[
          \binom{j}{l} + \binom{j}{l-1}
        \right]
        \frac{M_j}{M_l M_{j-l+1}}
        U_l^e u_{j-l+1}
        + \sum_{l=\floor{\frac{j+1}{2}}+1}^{j+1}
        \left[
          \binom{j}{l} + \binom{j}{l-1}
        \right]
        \frac{M_j}{M_l M_{j-l+1}}
        U_l^e u_{j-l+1}\\
        &=: F_{j,low}^{3,5} + F_{j,high}^{3,5}
      \end{aligned}
    \end{equation*}
    with the convention that
    \begin{equation*}
      \binom{j}{j+1} = 0.
    \end{equation*}
    Using the definition of $U^e$ and the 1d Sobolev embedding theorem
    we find
    \begin{equation*}
      \| U_l^e \|_{L^\infty_{x,y}}
      \le \| U^E_l \|_{L^\infty_x}
      \le \frac{C_s \lp^\gamma}{\tau_1}
      \| U^E_{l+1} \|.
    \end{equation*}
    As $l\ge 2$, this implies
    \begin{equation*}
      \left\|
        \left(1+\frac{y}{\jp^{\alpha}} \right)
        U_l^e u_{j-l+1} \right\|_j
      \le \frac{C_s\lp^\gamma}{\tau_1}
      \| U^E_{l+1} \|\, \| u_{j-l+1} \|_{j-l+1}.
    \end{equation*}

    For $l=2,\dots,\floor{\frac{j+1}{2}}$ we find for a constant $C = C(\gamma,r)$
    \begin{equation*}
      \left[
        \binom{j}{l} + \binom{j}{l-1}
      \right]
      \frac{M_j}{M_l M_{j-l+1}}
      \le \frac{C}{\tau_1} \binom{j}{l}^{1-\gamma} \frac{\jp^\gamma}{\lp^r}
    \end{equation*}
    so that as $l \ge 2$
    \begin{equation*}
      \frac{1}{\jp^{2-\gamma}}
      \left\|
        \left(1+\frac{y}{\jp^{\alpha}} \right)
        F_{j,low}^{3,5}
      \right\|_j
      \le \frac{C}{\tau_1^2}
      \sum_{l=2}^{\floor{\frac{j+1}{2}}}
      \lp^{\gamma-r}
      \| U^E_{l+1} \|\, \| u_{j-l+1} \|_{j-l+1}.
    \end{equation*}
    Hence we find
    \begin{align*}
      \sum_{j=0}^{\infty} \frac{1}{\jp^{4-2\gamma}}
      \left\|
      \left(1+\frac{y}{\jp^{\alpha}} \right)
      F_{j,low}^{3,5}
      \right\|_j^2
      & \le \frac{C}{\tau_1^4}
        \left( \sum_{l=0}^\infty \lp^{\gamma-r} \| U^E_l \|
        \right)^2
        \sum_{j=0}^\infty \| u_j \|_j^2 \\
      & \le  \frac{C}{\tau_1^4}
        \left(\sum_{l=0}^{\infty}  \lp^{2\gamma-2R} \right)
        \left(\sum_{l=0}^{\infty}  \lp^{2R-2r} \| U^E_l \|^2\right)
        \left(\sum_{j=0}^{\infty}  \| u_j \|_j^2 \right).
    \end{align*}
    As $2\gamma-R < -1$ this gives the bound
    \begin{equation*}
      \int_0^T \sum_{j=0}^{\infty} \frac{1}{\jp^{4-2\gamma}}
      \left\|
        \left(1+\frac{y}{\jp^{\alpha}} \right)
        F_{j,low}^{3,5}
      \right\|_j^2
      \, \dd t
      \le \frac{C}{\tau_1^4}
      \sup_{t\in[0,T]} |U^E|_{\gamma,\tau,R}^2
      \int_0^T \| u \|_{\gamma,\tau,r}^2\, \dd t.
    \end{equation*}

    For $l=\floor{\frac{j+1}{2}}+1,\dots,j$ we find
    \begin{equation*}
      \left[
        \binom{j}{l} + \binom{j}{l-1}
      \right]
      \frac{M_j}{M_l M_{j-l+1}}
      \le \frac{C}{\tau_1} \binom{j}{l-1}^{1-\gamma} \frac{\lp^\gamma}{(j{-}l{+}1)^r}
    \end{equation*}
    so that
    \begin{equation*}
      \frac{1}{\jp^{2-\gamma}}
      \left\|
        \left(1+\frac{y}{\jp^{\alpha}} \right)
        F_{j,high}^{3,5}
      \right\|_j
      \le \frac{C}{\tau_1^2}
      \sum_{l=\floor{\frac{j+1}{2}}+1}^j
      \lp^{3\gamma-2}
      \| U^E_{l+1} \|\,
      (j{-}l{+}1)^{-r}
      \| u_{j-l+1} \|_{j-l+1}.
    \end{equation*}
    Hence we find
    \begin{align*}
      \sum_{j=0}^{\infty} \frac{1}{\jp^{4-2\gamma}}
      \left\|
      \left(1+\frac{y}{\jp^{\alpha}} \right)
      F_{j,high}^{3,5}
      \right\|_j^2
      & \le \frac{\mathcal{C}}{\tau_1^4}
        \left( \sum_{j=0}^\infty \jp^{-r} \| u_l \|_l
        \right)^2
        \sum_{l=0}^\infty  \lp^{6\gamma-4} \| U^E_{l+1} \| \\
      & \le  \frac{\mathcal{C}}{\tau_1^4}
        \left(\sum_{j=0}^{\infty}  \jp^{-2r} \right)
        \left(\sum_{j=0}^{\infty}  \| u_l \|_l^2\right)
        \left(\sum_{l=0}^{\infty}  \lp^{6\gamma-4} \| U^E_{l+1} \|^2 \right).
    \end{align*}
    As $r > \frac 12$ this gives the bound
    \begin{equation*}
      \int_0^T \sum_{j=0}^{\infty} \frac{1}{\jp^{4-2\gamma}}
      \left\|
        \left(1+\frac{y}{\jp^{\alpha}} \right)
        F_{j,high}^{3,5}
      \right\|_j^2
      \, \dd t
      \le \frac{\mathcal{C}}{\tau_1^4}
      \sup_{t\in[0,T]} |U^E|_{\gamma,\tau,r+3\gamma-2}^2
      \int_0^T \| u \|_{\gamma,\tau,r}^2\, \dd t,
    \end{equation*}
    which is the required bound as $R \ge r+3\gamma-2$.

    \noindent \textbf{Analysis of $F_j^4$.}
    This term is creating trouble with the integrability in $y$ as
    $V^e \sim y$ and is the reason for most technical difficulties.

    We write
    \begin{equation*}
      \begin{aligned}
        F_{j}^4 &= - \sum_{l=1}^{\floor{\frac{j+1}{2}}} \binom{j}{l}
        \frac{M_j}{M_{l+1}M_{j-l}}
        \partial_x^{-1} V^e_{l+1}
        \partial_y u_{j-l}
        \:-\: \sum_{l=\floor{\frac{j+1}{2}}+1}^j \binom{j}{l}
        \frac{M_j}{M_{l+1}M_{j-l}}
        \partial_x^{-1} V^e_{l+1}
        \partial_y u_{j-l} \\
        &=: F_{j,low}^4 + F_{j,high}^4.
      \end{aligned}
    \end{equation*}
    As $l \ge 1$ we find
    \begin{equation*}
      \begin{aligned}
        \left\|
          \left(1+\frac{y}{\jp^{\alpha}} \right)
          \partial_x^{-1} V^e_{l+1}
          \partial_y u_{j-l}
        \right\|_{j}
        &\le
        \left\| \frac{\partial_x^{-1} V^e_{l+1}}{1+y}
        \right\|_{L^\infty_{x,y}}
        \left\| \left(1+\frac{y}{\jp^{\alpha}} \right)
          (1+y) \omega_{j-l} \right\|_j \\
        &\le \frac{C}{\tau} \lp^{\gamma} \| U^E_{l+2} \|\,
        \| (1+y) \omega_{j-l} \|_{j-l}
      \end{aligned}
    \end{equation*}
    where $C=C(r)$ is constant. In the last line we used the 1d Sobolev
    inequality and that
    \begin{equation*}
      \sqrt{\frac{\rho_j}{\rho_{j-l}}}
      \left(1+\frac{y}{\jp^{\alpha}} \right)
      \le C.
    \end{equation*}

    For $l=1,\dots,\floor{\frac{j+1}{2}}$ we find
    \begin{equation*}
      \binom{j}{l} \frac{M_j}{M_{l+1}M_{j-l}}
      \le \frac{C}{\tau_1} \binom{j}{l}^{1-\gamma}
      \lp^{\gamma-r}
    \end{equation*}
    so that
    \begin{equation*}
      \begin{aligned}
        \frac{1}{\jp^{2-\gamma}}
        \left\| \left(1+\frac{y}{\jp^{\alpha}} \right) F_{j,low}^4 \right\|_j
        &\le \frac{C}{\tau_1^2}
        \sum_{l=1}^{\floor{\frac{j+1}{2}}}
        \binom{j}{l}^{1-\gamma}
        \lp^{2\gamma-r}
        \jp^{\gamma-2}
        \| U^E_{l+2} \|\,
        \| (1+y) \omega_{j-l} \|_{j-l} \\
        &\le \frac{C}{\tau_1^2}
        \sum_{l=1}^{\floor{\frac{j+1}{2}}}
        \lp^{2\gamma-r}
        \| U^E_{l+2} \|\,
        \jp^{-1}
        \| (1+y) \omega_{j-l} \|_{j-l}.
      \end{aligned}
    \end{equation*}
    Hence we find
    \begin{equation*}
      \begin{aligned}
        \sum_{j=0}^{\infty} \frac{1}{\jp^{4-2\gamma}}
        \left\| \left(1+\frac{y}{\jp^{\alpha}} \right) F_{j,low}^4 \right\|_j^2
        & \le  \frac{\mathcal{C}}{\tau_1^4}
        \left(\sum_{l=0}^{\infty}  \lp^{2\gamma-r} \| U^E_{l+2}\| \right)^2
        \left(\sum_{j=0}^{\infty}  \jp^{-2} \| (1+y) \omega_j \|_j^2 \right)\\
        & \le  \frac{\mathcal{C}}{\tau_1^4}
        | U^E|_{\gamma,\tau,R}^2
        \left(\sum_{j=0}^{\infty}  \jp^{-2} \| (1+y) \omega_j \|_j^2 \right)\\
      \end{aligned}
    \end{equation*}
    as $4\gamma-2R < -1$. This gives the bound
    \begin{equation*}
      \int_0^T \sum_{j=0}^{\infty} \frac{1}{\jp^{4-2\gamma}}
      \left\| \left(1+\frac{y}{\jp^{\alpha}} \right) F_{j,low}^4 \right\|_j^2
      \, \dd t
      \le \frac{\mathcal{C}}{\tau_1^4}
      \sup_{t\in[0,T]} |U^E|_{\gamma,\tau,R}^2
      \int_0^T \| (1+y) \omega \|_{\gamma,\tau,r-1}^2\, \dd t,
    \end{equation*}
    which is the required bound as $-1 \le 1-\gamma$.

    For $F_{j,high}^4$ we find
    \begin{equation*}
      \frac{1}{\jp^{2-\gamma}}
      \left\| \left(1+\frac{y}{\jp^{\alpha}} \right) F_{j,high}^4 \right\|_j
      \le \frac{C}{\tau^2}
      \sum_{l=\floor{\frac{j+1}{2}}+1}^{j}
      \frac{\lp^{3\gamma-2}}
      {(j{-}l{+}1)^{r}}
      \| U^E_{l+1} \|\,
      \| (1+y) \omega_{j-l} \|_{j-l}.
    \end{equation*}
    As $-1+\gamma-r < - \frac 12$ this gives the bound
    \begin{equation*}
      \int_0^T \sum_{j=0}^{\infty} \frac{1}{\jp^{4-2\gamma}}
      \left\| \left(1+\frac{y}{\jp^{\alpha}} \right) F_{j,high}^4 \right\|_j^2
      \, \dd t
      \le \frac{\mathcal{C}}{\tau_1^4}
      \sup_{t\in[0,T]} |U^E|_{\gamma,\tau,r+3\gamma-2}^2
      \int_0^T \| (1+y)\omega \|_{\gamma,\tau,r+1-\gamma}^2\, \dd t.
    \end{equation*}
    As $R \ge r+3\gamma-2$ this is the required result.

    \noindent\textbf{Analysis of $F_j^6$.} We write
    \begin{equation*}
      F_j^6
      = -\sum_{l=2}^{\floor{\frac{j+1}{2}}} \binom{j}{l}
      \frac{M_j}{M_l M_{j-l+1}}
      \partial_y U^e_l \partial_x^{-1} v_{j-l+1}
      -\sum_{l=\floor{\frac{j+1}{2}}+1}^{j} \binom{j}{l}
      \frac{M_j}{M_l M_{j-l+1}}
      \partial_y U^e_l \partial_x^{-1} v_{j-l+1}
      =: F_{j,low}^6 + F_{j,high}^6.
    \end{equation*}
    As $\partial_y U^e$ is exponentially decaying, we find
    \begin{equation*}
      \begin{aligned}
        \left\|
          \left(1+\frac{y}{\jp^{\alpha}} \right)
          \partial_y U^e_l \partial_x^{-1} v_{j-l+1}
        \right\|_j
        &\le C \| U^E_l \|_{L^\infty_x}
        \| u_{j-l+1} \|_{j-l+1}\\
        &\le \frac{C\lp^{\gamma}}{\tau}
        \| U^E_{l+1} \|\,
        \| u_{j-l+1} \|_{j-l+1}.
      \end{aligned}
    \end{equation*}
    For $F_{j,low}^6$ we find (using that $\binom{j}{l} \frac{M_j}{M_l M_{j-l+1}}\le C (l+1)^{-r}$ for  $l=2...\floor{\frac{j+1}{2}}$):
    \begin{equation*}
      \frac{1}{\jp^{2-\gamma}}
      \left\| \left(1+\frac{y}{\jp^{\alpha}} \right) F_{j,low}^6 \right\|_j
      \le \frac{C}{\tau_1^2}
      \sum_{l=2}^{\floor{\frac{j+1}{2}}+1}
      \lp^{\gamma-r}
      \| U^E_{l+1} \|\,
      \| u_{j-l+1} \|_{j-l+1}.
    \end{equation*}
    As $\gamma-R < - \frac 12$ this gives the control
    \begin{equation*}
      \int_0^T \sum_{j=0}^{\infty} \frac{1}{\jp^{4-2\gamma}}
      \left\| \left(1+\frac{y}{\jp^{\alpha}} \right) F_{j,low}^6 \right\|_j^2
      \, \dd t
      \le \frac{\mathcal{C}}{\tau_1^4}
      \sup_{t\in[0,T]} |U^E|_{\gamma,\tau,R}^2
      \int_0^T \| u \|_{\gamma,\tau,r}^2\, \dd t.
    \end{equation*}

    For $F_{j,high}^6$ we find
    \begin{equation*}
      \frac{1}{\jp^{2-\gamma}}
      \left\| \left(1+\frac{y}{\jp^{\alpha}} \right) F_{j,high}^6 \right\|_j
      \le \frac{C}{\tau_1^2}
      \sum_{l=\floor{\frac{j+1}{2}}+1}^{j}
      \lp^{2\gamma-2}
      \| U^E_{l+1} \|\,
      \jp^{\gamma-r}
      \| u_{j-l+1} \|_{j-l+1}.
    \end{equation*}
    As $\gamma-r < - \frac 12$ this gives the control
    \begin{equation*}
      \int_0^T \sum_{j=0}^{\infty} \frac{1}{\jp^{4-2\gamma}}
      \left\| \left(1+\frac{y}{\jp^{\alpha}} \right) F_{j,high}^6 \right\|_j^2
      \, \dd t
      \le \frac{\mathcal{C}}{\tau_1^4}
      \sup_{t\in[0,T]} |U^E|_{\gamma,\tau,r+1-\gamma}^2
      \int_0^T \| u \|_{\gamma,\tau,r}^2\, \dd t,
    \end{equation*}
    which is the required control as $R \ge r+1-\gamma$.
  \end{proof}
  As a direct consequence of \cref{thm:final-linear-control} and
  \cref{thm:fj-u-gevrey}, we can state the following corollary, where
  we use that \(F_j|_{y=0} = f^e_j|_{y=0}\) as \(u\) and \(v\) vanish
  at \(y=0\).
  \begin{cor} \label{cor:apriori}
    Fix the parameters $m, \alpha, \gamma, r, R$ as in
    \eqref{fix_parameters} and $\alpha \ge 1/2$. There exists
    $\mathcal{C}$ and $\mathbf{C}$ such that for all $\beta, \tau_1, T$
    with
    $$  \beta \ge \mathcal{C} (1 + \| (U^P,V^P) \|_{low})\,
    (1 + \frac{1}{\tau_1} + \| (U^P,V^P) \|_{low}),
    \quad  \text{ and }
    \tau(T) \ge \tau_1
    $$
    we have
    \begin{equation}
      \begin{aligned}
        \tnorm{u}^2 & \le \mathbf{C}  \left[ \frac{1}{\beta} \|
          u_{\init} \|_{\gamma,\tau_0,r+\gamma-\frac{3}{2}}^2 +
          \frac{1}{\beta^2} \| (1+y) \omega_{\init}
          \|_{\gamma,\tau_0,r+\frac 12 - \gamma}^2\right] \\
        &+ \mathbf{C} \left[
          +  \frac{1}{\beta^2} \int_0^T \| f^e_j|_{y=0}
          \|_{\gamma,\tau,r-2+\gamma}^2\, \dd t
          +  \frac{1}{\beta^2} \int_0^T \| (1+y) f^e_j
          \|_{\gamma,\tau,r-2+\gamma}^2 \, \dd t\right] \, \\
        & + \frac{\mathbf{C}}{\tau_1^4}   \left(\frac{1}{\beta}|U^E|_{\gamma,\tau,R}^2 +  \tnorm{u}^2 \right) \tnorm{u}^2
      \end{aligned}
    \end{equation}
    where
    \begin{equation} \label{def:tnorm}
      \begin{aligned}
        \tnorm{u}^2  =
        &\int_0^T \| u \|_{\gamma,\tau,r}^2\, \dd t
        + \sup_{t\in[0,T]}
        \frac{1}{\beta} \| u \|_{\gamma,\tau,r-\frac{\gamma}{2}}^2
        +  \frac{1}{\beta} \int_0^T \ \| (1+y) \omega \|_{\gamma,\tau,r+1-\gamma}^2\, \dd t\\
        &\quad+ \sup_{t\in[0,T]}
        \frac{1}{\beta^2}
        \| (1+y) \omega \|_{\gamma,\tau,r+\frac 12 - \gamma}^2
        + \frac{1}{\beta^2}
        \int_0^T \| (1+y) \partial_y \omega \|_{\gamma,\tau,r+\frac 12 -\gamma}^2\, \dd t
      \end{aligned}
    \end{equation}
  \end{cor}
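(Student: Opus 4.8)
The plan is to simply chain \cref{thm:final-linear-control} and \cref{thm:fj-u-gevrey} — which were tailored to fit together — and then to carry out the $\beta$- and weight-index bookkeeping needed to package everything into $\tnorm{u}^2$. First I would check the hypotheses: the constraints \eqref{fix_parameters} together with $\alpha\ge\tfrac12$ give $m>\tfrac12$, $\tfrac12\le\alpha\le\tfrac12+\gamma$ and $1\le\gamma\le2$ (indeed $\gamma\ge\tfrac32\ge\tfrac54$), so both propositions apply for the stated $\beta,\tau_1,T$. Since $\gamma\ge\tfrac54$ I would use the second (compressed) form of \cref{thm:final-linear-control}, whose left-hand side is exactly $\tnorm{u}^2$ by \eqref{def:tnorm}. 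This produces $C$ times the initial-data terms of the corollary, plus $\frac{C}{\beta^2}\sum_j\int_0^T\jp^{-(4-2\gamma)}\bigl\|\bigl(1+\tfrac{y}{\jp^{2\gamma-\frac52}}\bigr)F_j\bigr\|_j^2\,\dd t$ and $\frac{C}{\beta^2}\sum_j\int_0^T\jp^{-(2\gamma-1)}\|F_j|_{y=0}\|_{L^2_x}^2\,\dd t$. For the last term I would use $F_j|_{y=0}=f^e_j|_{y=0}$ (since $u=v=0$ at $y=0$); because $\gamma\ge\tfrac32$ forces $r-\gamma+\tfrac12\le r-2+\gamma$, comparing the $\jp$-powers in the Gevrey weight shows it is dominated by $\frac{C}{\beta^2}\int_0^T\|f^e_j|_{y=0}\|_{\gamma,\tau,r-2+\gamma}^2\,\dd t$.

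Next I would substitute \cref{thm:fj-u-gevrey} for the interior $\sum_j$-term, which replaces it by $2\int_0^T\|(1+y)f^e_j\|_{\gamma,\tau,r-2+\gamma}^2\,\dd t$ plus the nonlinear contribution
\begin{equation*}
  \frac{C\beta}{\tau_1^4}\Bigl[\sup_{[0,T]}\bigl(\|u\|_{\gamma,\tau,r-\frac\gamma2}^2+\tfrac1\beta\|(1+y)\omega\|_{\gamma,\tau,r+\frac12-\gamma}^2+|U^E|_{\gamma,\tau,R}^2\bigr)\Bigr]\int_0^T\bigl(\|u\|_{\gamma,\tau,r}^2+\tfrac1\beta\|(1+y)\omega\|_{\gamma,\tau,r+1-\gamma}^2\bigr)\,\dd t.
\end{equation*}
The only substantive step is to read off from \eqref{def:tnorm}, using subadditivity of $\sup$, that
\begin{equation*}
  \sup_{[0,T]}\Bigl(\|u\|_{\gamma,\tau,r-\frac\gamma2}^2+\tfrac1\beta\|(1+y)\omega\|_{\gamma,\tau,r+\frac12-\gamma}^2\Bigr)\le\beta\,\tnorm{u}^2,\qquad\int_0^T\Bigl(\|u\|_{\gamma,\tau,r}^2+\tfrac1\beta\|(1+y)\omega\|_{\gamma,\tau,r+1-\gamma}^2\Bigr)\,\dd t\le\tnorm{u}^2.
\end{equation*}
Carrying the prefactor $\tfrac1{\beta^2}$ from \cref{thm:final-linear-control} into $\tfrac{C\beta}{\tau_1^4}$ and applying these two bounds (together with $\sup|U^E|_{\gamma,\tau,R}^2$ kept as is) turns the nonlinear contribution into $\frac{C}{\tau_1^4}\bigl(\tnorm{u}^2+\tfrac1\beta|U^E|_{\gamma,\tau,R}^2\bigr)\tnorm{u}^2$, which is exactly the last term in the corollary. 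Collecting the three pieces — the initial data, the interior and boundary forcing by $f^e$, and this cubic-in-$u$ term — yields the claimed inequality.

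I do not expect a genuine obstacle here: the content is bookkeeping. The one point that needs care is the weight-index matching (the shifts of $r$ by $\gamma/2$, $1-\gamma$, $\tfrac12-\gamma$ and $2-\gamma$), together with checking that the hypothesis $\gamma\ge\tfrac32$ is precisely what makes the boundary-term comparison above and the reduction $2\gamma-\tfrac52\ge\alpha$ used inside \cref{thm:fj-u-gevrey} legitimate; all of these are already isolated in the two propositions, so only their correct assembly remains.
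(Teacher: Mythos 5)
Your proof is correct and is exactly the assembly the paper has in mind: the paper itself presents \cref{cor:apriori} as a "direct consequence" of \cref{thm:final-linear-control} (second form, $\gamma\ge 5/4$) and \cref{thm:fj-u-gevrey}, using $F_j|_{y=0}=f^e_j|_{y=0}$. Your bookkeeping of the $\beta$-powers, the absorption of the sup/integral terms into $\tnorm{u}^2$ via \eqref{def:tnorm}, and the weight-index comparison $r-\gamma+\tfrac12\le r-2+\gamma$ (which needs only $\gamma\ge\tfrac54$, guaranteed by $\gamma\ge\tfrac32$) are all accurate.
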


  \section{Control of the low norm and final a priori estimate}
  \label{sec:low-norm}
  \Cref{cor:apriori}, which shows an a priori bound on the Gevrey norm
  of $u$, was derived under a lower bound on $\beta$ involving
  $\| (U^P,V^P) \|_{low}$. The last step is to see how this low norm
  relates to $\tnorm{u}$. A convenient approach is to establish an
  additional estimate on a weighted Sobolev norm, namely
  \begin{equation*}
    \|f \|^2_{\mathcal{H}^s}
    = \sum_{|\bar{\alpha}|\le s} \int_{\T \times \R_+} |\pa^{\bar{\alpha}} f|^2 (1+y)^{2\bar{\alpha}_2} \rho_0(y) \,\dd x\, \dd y,
  \end{equation*}
  where the summation variable is the multiindex
  $\bar{\alpha} = (\bar{\alpha}_1,\bar{\alpha}_2)$. In this setting, we
  can state the following estimate.
  \begin{lemma} \label{lemma:low}
    Let $s \ge 3$ be an even integer, $m\ge s+2$, $\alpha \ge 0$,
    $r \in \R$, $\gamma \ge 1$, and define $\tnorm{u}$ as in
    \eqref{def:tnorm}. Then, there exists $C$ depending on
    $s,m,\alpha,\gamma,r$ such that
    \begin{equation}
      \begin{aligned}
        \frac{\dd}{\dd t}\| \omega^P \|^2_{\mathcal{H}^s} + \| \pa_y \omega^P \|^2_{\mathcal{H}^s} &  \le C \| \omega^P \|_{\mathcal{H}^s}^s + C (1+ \|U^E \|_{H^{s+1}(\T)} + \tnorm{u}) \| \omega^P \|_{\mathcal{H}^s}^2 \\
        & + \sum_{l=0}^{\frac{s}{2}} \|\pa_t^l (\pa_t + U^E \pa_x U^E) \|^2_{H^{s-2l}}.
      \end{aligned}
    \end{equation}
    where $\omega^P = \pa_y U^P$.
  \end{lemma}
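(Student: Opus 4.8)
The plan is to run a weighted Sobolev energy estimate directly on the vorticity $\omega^P = \pa_y U^P$. Differentiating the Prandtl equation \eqref{eq:prandtl} in $y$ and using $\pa_y V^P = -\pa_x U^P$ gives the transport–diffusion equation \eqref{eq:transport:vorticity}, i.e.
\[
  \pa_t \omega^P + U^P \pa_x \omega^P + V^P \pa_y \omega^P - \pa_y^2 \omega^P = 0 .
\]
For each multiindex $\bar\alpha=(\bar\alpha_1,\bar\alpha_2)$ with $|\bar\alpha|\le s$ I would apply $\pa^{\bar\alpha}$ to this equation, test with $(1+y)^{2\bar\alpha_2}\rho_0\,\pa^{\bar\alpha}\omega^P$ in $L^2(\T\times\R_+)$, and sum over $\bar\alpha$. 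Letting the weight grow like $(1+y)^{2\bar\alpha_2}$ with the number of $y$-derivatives is exactly what makes the transport term $V^P\pa_y$ controllable, since $V^P\sim -y\,\pa_x U^E$ grows linearly; this is the same mechanism already used in \cref{thm:vorticity-control}.

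The structural terms are routine. The time derivative yields $\tfrac12\frac{\dd}{\dd t}\|\omega^P\|_{\mathcal H^s}^2$. Integrating by parts in $x$ (no boundary term, $x\in\T$) in $U^P\pa_x$, and in $y$ in $V^P\pa_y$ (no boundary term since $V^P|_{y=0}=0$), produces coefficients $\pa_x U^P$, $\pa_y V^P=-\pa_x U^P$ and $V^P\,\pa_y\big((1+y)^{2\bar\alpha_2}\rho_0\big)/\big((1+y)^{2\bar\alpha_2}\rho_0\big)\sim V^P/(1+y)$, all bounded by $C(1+\|U^E\|_{H^{s+1}(\T)}+\tnorm u)$ via one–dimensional Sobolev embeddings and, for $V^P=-\int_0^y\pa_x U^P$, the Hardy-type inequality of \cref{thm:poincare-y-rho} (using $m\ge s+2$). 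The diffusion term, after integration by parts in $y$, gives the good term $\|\pa_y\omega^P\|_{\mathcal H^s}^2$, a weight-commutator controlled by $\tfrac14\|\pa_y\omega^P\|_{\mathcal H^s}^2+C\|\omega^P\|_{\mathcal H^s}^2$ thanks to \cref{thm:logarithmic-rho-bound}, and a boundary term at $y=0$, namely $\sum_{|\bar\alpha|\le s}\int_\T \pa^{\bar\alpha}\pa_y\omega^P\,\pa^{\bar\alpha}\omega^P\big|_{y=0}\,\dd x$.

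This boundary term is the heart of the matter, and where the compatibility conditions and the forcing $\pa_t^l(\pa_t+U^E\pa_x)U^E$ enter. Using $U^P|_{y=0}=V^P|_{y=0}=0$ together with \eqref{eq:prandtl} restricted to $y=0$ one gets $\pa_y^2U^P|_{y=0}=-(\pa_t+U^E\pa_x)U^E=:-g$; iterating the vorticity equation in $y$ at $y=0$ (where the convective part drops because $U^P,V^P$ vanish there) one obtains, for every $k$, an identity $\pa_y^{k}U^P|_{y=0}=(\pm\text{ a fixed }\pa_t\text{-power of }g\text{ for }k\text{ even},\ \pa_t\text{-power of }\mu_1:=\omega^P|_{y=0}\text{ for }k\text{ odd})+(\text{quadratic lower-order traces})$. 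For $|\bar\alpha|\le s-1$ both factors in $\mathcal B_{\bar\alpha}$ have order $\le s$ in derivatives of $\omega^P$ resp.\ $\pa_y\omega^P$, hence are controlled by the $1$d trace inequality $\|\pa^{\bar\alpha}\omega^P|_{y=0}\|_{L^2_x}^2\lesssim\|\omega^P\|_{\mathcal H^s}\big(\|\pa_y\omega^P\|_{\mathcal H^s}+\|\omega^P\|_{\mathcal H^s}\big)$, and Young's inequality leaves $\tfrac14\|\pa_y\omega^P\|_{\mathcal H^s}^2+C\|\omega^P\|_{\mathcal H^s}^2$. For $|\bar\alpha|=s$ the factor $\pa^{\bar\alpha}\pa_y\omega^P|_{y=0}$ has one derivative too many, so it must be replaced through the trace identities above: if $\bar\alpha_2$ is even it becomes $\pm\pa_t^{\bar\alpha_2/2}g$ with $\bar\alpha_1=s-\bar\alpha_2$ $x$-derivatives, while the other factor is trace-controlled; if $\bar\alpha_2$ is odd (so $\bar\alpha_1\ge1$ since $s$ is even) one first integrates by parts once in $x$, after which one factor is again trace-controlled and the other is $\pm\pa_t^{(\bar\alpha_2-1)/2}g$ with $s-\bar\alpha_2+1$ $x$-derivatives. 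In every case the $g$-factor is $\pa_t^l g$ with $l\le s/2$ and at most $s-2l$ $x$-derivatives, producing precisely $\sum_{l=0}^{s/2}\|\pa_t^l(\pa_t+U^E\pa_x)U^E\|_{H^{s-2l}}^2$; the quadratic lower-order traces are reabsorbed into $C\|\omega^P\|_{\mathcal H^s}^2$. I expect the bookkeeping of these traces — exactly how many $x$-derivatives and $\pa_t$'s each carries, and the borderline integration by parts — to be the main obstacle.

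It remains to estimate the commutators $[\pa^{\bar\alpha},U^P\pa_x+V^P\pa_y]\omega^P$, which are finite sums $\pa^{\bar\beta}U^P\,\pa^{\bar\gamma}\pa_x\omega^P$ and $\pa^{\bar\beta}V^P\,\pa^{\bar\gamma}\pa_y\omega^P$ with $|\bar\beta|\ge1$ and $|\bar\beta|+|\bar\gamma|\le s$. I would handle these by standard Moser-type splitting: the factor with fewer derivatives goes in $L^\infty$ (one–dimensional Sobolev in each variable, using $m\ge s+2$; the relevant low quantities of $U^P$, and through $V^P=-\int_0^y\pa_x U^P$ and \cref{thm:poincare-y-rho} those of $V^P/(1+y)$, are bounded by $C(1+\|U^E\|_{H^{s+1}(\T)}+\tnorm u)$ or, when purely vorticity-dependent, by $\|\omega^P\|_{\mathcal H^s}$), the other in the weighted $L^2$; the growing weight $(1+y)^{2\bar\alpha_2}$ absorbs the linear growth of $V^P$ in $\pa^{\bar\beta}V^P$ with $\bar\beta_2=0$. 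Summing over $\bar\alpha$, applying Young's inequality to absorb the remaining $\tfrac14\|\pa_y\omega^P\|_{\mathcal H^s}^2$ into the dissipation, and collecting the vorticity self-interactions into $C\|\omega^P\|_{\mathcal H^s}^s$ and the mixed terms into $C(1+\|U^E\|_{H^{s+1}(\T)}+\tnorm u)\|\omega^P\|_{\mathcal H^s}^2$, yields the claimed differential inequality.
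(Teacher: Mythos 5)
Your proposal is correct and follows essentially the same route as the paper: a weighted Sobolev energy estimate on $\omega^P$ with the $y$-weight growing like $(1+y)^{2\bar\alpha_2}$ to absorb the linear growth of $V^P$, a recursive reduction of the $y=0$ boundary terms using the vorticity equation (for which the paper delegates the bookkeeping to Proposition~5.6 and Lemma~5.9 of \cite{masmoudi-wong-2015-local-in-time-prandtl}), and the identification of the top $x$-derivative of $V^P$ as the single place where $\tnorm{u}$ must replace $\|\omega^P\|_{\mathcal H^s}$. One small imprecision worth flagging: for the worst commutator term $\pa_x^s V^P\,\pa_y\omega^P$, arising from $\bar\alpha=(s,0)$, your stated rule ``factor with fewer derivatives goes in $L^\infty$'' would put $\pa_y\omega^P$ in $L^\infty$ and $\pa_x^s V^P$ in the weighted $L^2$; this fails because $\pa_x^s V^P$ grows linearly in $y$ and $\rho_0$ itself grows, so that $L^2(\rho_0)$ norm is infinite. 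You must instead split the other way, as the paper does, estimating $\|(1+y)^{-1}\pa_x^s V^P\|_{\infty}\lesssim \|\pa_x^{s+1}u\|_\infty\lesssim\tnorm u$ (this is the classical loss of an $x$-derivative in Prandtl, and is precisely why $\tnorm u$ appears on the right of the inequality) while keeping $(1+y)\pa_y\omega^P$ and $\omega^P$ in weighted $L^2$. Since you do explicitly list $V^P/(1+y)$ among the quantities bounded by $C(1+\|U^E\|_{H^{s+1}}+\tnorm u)$, you clearly intend this, but the splitting rule as phrased does not deliver it.
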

  \begin{proof}
    A similar estimate was established in
    \cite[Proposition~5.6]{masmoudi-wong-2015-local-in-time-prandtl}, so
    that we will only explain the main steps. The starting point is the
    advection-diffusion equation on the vorticity
    \begin{equation}
      (\pa_t  + U^P \pa_x + V^P \pa_y) \omega^P - \pa^2_y \omega^P = 0.
    \end{equation}
    One applies $\pa^\alpha$ to the equation, test it against
    $(1+y)^{2\alpha_2} \rho_0 \pa^\alpha\omega^P$, and sum over
    $|\alpha| \le s$. Then,
    \begin{align*}
      \frac{1}{2} \pa_t \|\omega^P \|_{\mathcal{H}^s}^2 & + \| \pa_y \omega^P \|^2_{\mathcal{H}^s} \le  \sum_{|\alpha| \le s} \int V^P \pa_y ((1+y)^{2\alpha_2} \rho_0) |\pa^\alpha \omega^P|^2   \\
                                                        &  -\sum_{|\alpha| \le s}\int[  \pa^\alpha ,  (U^P \pa_x + V^P \pa_y) ] \omega^P  \,   \pa^\alpha \omega^P (1+y)^{2\alpha_2} \rho_0  \\
                                                        &   -  \sum_{|\alpha| \le s} \int_{\Omega} \pa_y ((1+y)^{2\alpha_2} \rho_0) \pa_y \pa^\alpha \omega^P  \,   \pa^\alpha \omega^P  -  \sum_{|\alpha| \le s} \int_{\{y=0\}}  \pa_y \pa^\alpha \omega^P  \, \pa^\alpha \omega^P.
    \end{align*}
    Using the equation on $U^P$, one can obtain recursively boundary
    conditions for the odd derivatives $\pa_y^{2k+1} \omega^P$, starting
    from the Neumann condition
    $$  \pa_y \omega^P\vert_{y=0} = - \pa_t U^E - U^E \pa_x U^E.$$
    More precisely, the boundary data $\pa_y^{2k+1} \omega^P\vert_{y=0}$
    can be expressed in terms of the data $U^E$ and of products of mixed
    derivatives $\pa_x^{\rho_1} \pa_y^{\rho_2}\omega^P\vert_{y=0}$ with
    $\rho_2\le 2k-2$.  We refer to \cite[Lemma
    5.9]{masmoudi-wong-2015-local-in-time-prandtl} for the expressions
    of these boundary conditions. This allows to establish the following
    bound, {\it cf} equations (5.20)-(5.22) in
    \cite{masmoudi-wong-2015-local-in-time-prandtl}:
    $$ -\sum_{|\alpha| \le s}  \int_{\{y=0\}}  \pa_y \pa^\alpha \omega^P  \, \pa^\alpha \omega^P \le C_s \|\omega^P \|_{\mathcal{H}^s}^s + C_s \sum_{l=0}^{\frac{s}{2}} \|\pa_t^l (\pa_t + U^E \pa_x U^E) \|_{H^{s-2l}}^2  + \frac{1}{4} \|\pa_y \omega^P \|_{\mathcal{H}^s}^2. $$
    The diffusion term does not raise any difficulty: we find
    $$  -  \sum_{|\alpha| \le s} \int_{\Omega} \pa_y ((1+y)^{2\alpha_2} \rho_0) \pa_y \pa^\alpha \omega^P  \,   \pa^\alpha \omega^P  \le  C \|\omega^P\|_{\mathcal{H}^s}^2 + \frac{1}{4}  \|\pa_y\omega^P \|_{\mathcal{H}^s}^2. $$
    where $C$ depends on $s$ and $m$. Also, through standard estimates, we find
    $$ \sum_{|\alpha| \le s} \int V^P \pa_y ((1+y)^{2\alpha_2} \rho_0) |\pa^\alpha \omega^P|^2  \le C \|\omega^P\|_{\mathcal{H}^s}^3 $$
    and
    $$ \sum_{|\alpha| \le s}\int[\pa^\alpha ,  U^P \pa_x  ] \omega^P  \,   \pa^\alpha \omega^P (1+y)^{2\alpha_2} \rho_0 \le C \|\omega^P\|_{\mathcal{H}^s}^3. $$
    The other part of the commutator is slightly more delicate. First, one can show that
    $$  \sum_{|\alpha| \le s, \alpha_1 \neq s}\int[\pa^\alpha ,  V^P \pa_y  ] \omega^P  \,   \pa^\alpha \omega^P (1+y)^{2\alpha_2} \rho_0 \le C \|\omega^P\|_{\mathcal{H}^s}^3. $$
    Note that the weight (that grows with the number of $y$-derivatives)
    allows to compensate for the linear growth in $y$ of $V^P$.  The
    success of this trick comes from the fact that we are interested
    here in Sobolev estimates (contrary to the former Gevrey estimates).
    When $\alpha_1 = s$, namely $\alpha = (s,0)$, one can show similarly
    that
    \begin{align*}
      & \int[\pa^\alpha ,  V^P \pa_y  ] \omega^P  \,   \pa^\alpha \omega^P  \rho_0 - \int \pa^\alpha V^P \pa_y \omega_P    \,   \pa^\alpha \omega^P  \rho_0
        \: \le \:  C  \|\omega^P\|_{\mathcal{H}^s}^3.
    \end{align*}
    However, the term where the $s$ derivatives with respect to $x$
    apply to $V^P$ can not be handled with usual manipulations. It is
    the well-known loss of $x$-derivative peculiar to the Prandtl
    equation: in particular, one cannot control
    $\| (1+y)^{-1}\pa^s_x V^P \|_{L^2(\rho_0)}$ by
    $\|\omega^P\|_{\mathcal{H}^s}$. This is where $\tnorm{u}$ is
    involved. We find that
    \begin{align*}
      \int \pa^\alpha V^P \pa_y \omega_P    \,   \pa^\alpha \omega^P  \rho_0
      &\le \|(1+y)^{-1} \pa_x^s V^P\|_{\infty}\, \| (1+y) \pa_y \omega^P\|_{L^2_xL^2(\rho_0)}\, \|\omega^P\|_{L^2_xL^2(\rho_0)} \\
      &\le C (\|U^E\|_{H^{s+1}} +\tnorm{u})   \|\omega^P\|_{\mathcal{H}^s}^2
    \end{align*}
    using that
    $\|(1+y)^{-1} \pa_x^s V^P\|_{\infty} \le C \|\pa_x^{s+1}u^P\|_\infty
    \le C \tnorm{u}$ as soon as $m \ge s+2$. Putting together the
    previous estimates yields the result.
  \end{proof}
  We conclude this section with
  \begin{proposition} \label{thm:complete-apriori}
    Let us fix $s=6$, $m \ge s+2$, $\alpha$, $\gamma$, $r$, $R$ as in
    \eqref{fix_parameters} and $\alpha \ge 1/2$. Further fix
    $\tau_1 >0$. Let
    \begin{equation*}
      M_\init = 2 \max(\mathbf{C},1) \left( \| u_{\init} \|_{\gamma,\tau_0,r+\gamma-\frac{3}{2}}^2 +  \| (1+y) \omega_{\init} \|_{\gamma,\tau_0,r+\frac 12 - \gamma}^2 + \|\omega^P\vert_{t=0}\|_{\mathcal{H}^s}^2\right)
    \end{equation*}
    where $\mathbf{C}$ is the constant appearing in
    \cref{cor:apriori}. There exists $\beta_*$ and $T_*$ depending on
    $\tau_1$, $M_\init$, on $\|\omega^P\vert_{t=0}\|_{\mathcal{H}^s}$, on $\sup_{[0,T_0]} |U^E|_{\gamma,\tau_0,R}^2$ and on various Sobolev norms of $U^E$, such that, for all $\beta > \beta_*$
    and for all $T \le T_*$ with $\tau(T) \ge \tau_1$: if
    $\tnorm{u}^2 \le \frac{2M_\init}{\beta}$, then
    $\tnorm{u}^2 \le \frac{3M_\init}{2\beta}$.
  \end{proposition}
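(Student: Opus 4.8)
The plan is a continuation (bootstrap) argument: under the standing hypothesis $\tnorm{u}^2 \le \tfrac{2M_\init}{\beta}$ on $[0,T]$, I would first extract a short-time bound on the low norm $\|(U^P,V^P)\|_{low}$ from \cref{lemma:low}, and then feed it into \cref{cor:apriori} to close the estimate with the improved constant. \emph{First, the low Sobolev norm.} Arranging $\beta_* \ge 1$, the hypothesis $\beta > \beta_*$ gives $\tnorm{u} \le \sqrt{2M_\init}$ on $[0,T]$. Inserting this into \cref{lemma:low} with $s=6$ and writing $X(t) = \|\omega^P(t)\|_{\mathcal{H}^s}^2$, one obtains a scalar differential inequality $X' \le C X^{s/2} + C_3 X + G^2$, where $C_3$ depends on $\sup_{[0,T_0]}\|U^E\|_{H^{s+1}}$ and $\sqrt{2M_\init}$, and $G^2 = \sup_{[0,T_0]}\sum_{l=0}^{s/2}\|\partial_t^l(\partial_t+U^E\partial_x)U^E\|_{H^{s-2l}}^2$; both are finite by the hypotheses of \cref{thm:main}, since Gevrey control of $U^E$ entails control of all its Sobolev norms. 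A scalar ODE comparison then yields a time $T_1 > 0$ and a constant $C_*$, both depending only on $\|\omega^P\vert_{t=0}\|_{\mathcal{H}^s}$, $M_\init$ and Sobolev norms of $U^E$, such that $\sup_{[0,\min(T,T_1)]}\|\omega^P\|_{\mathcal{H}^s} \le C_*$.

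\emph{Second, from $\omega^P$ to the low norm.} Using $U^P(t,x,\cdot) = \int_0^y \omega^P(t,x,z)\,\dd z$ and $V^P = -\int_0^y \partial_x U^P$, I would bound every term entering \eqref{def:low} — the several $L^\infty$ norms, the $L^\infty_x L^2_y(\rho_0)$ norms, and the quantities $\partial_x^k V^P/(1+y)$ — by $C(1+\|\omega^P\|_{\mathcal{H}^s})$, using the one-dimensional Sobolev embedding $H^2 \hookrightarrow L^\infty$ and the weighted antiderivative estimate \cref{thm:poincare-y-rho}, with the choice $m \ge s+2$ absorbing the growth of the $y$-weights. This gives $\|(U^P,V^P)\|_{low} \le L_*$ on $[0,\min(T,T_1)]$, with $L_*$ depending on $C_*$ and on $U^E$.

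\emph{Third, closing via \cref{cor:apriori}.} I would then fix $\beta_* \ge 1$ large enough that $\beta > \beta_*$ forces both the hypothesis $\beta \ge \mathcal{C}(1+L_*)(1+\tfrac{1}{\tau_1}+L_*)$ of \cref{cor:apriori} and $\beta \ge \tfrac{4\mathbf{C}}{\tau_1^4}\bigl(\sup_{[0,T_0]}|U^E|_{\gamma,\tau_0,R}^2 + 2M_\init\bigr)$, and fix $T_* = \min\bigl(T_1, \tfrac{M_\init}{2\mathbf{C}K}\bigr)$ with $K = \sup_{[0,T_0]}\bigl(\|f^e\vert_{y=0}\|_{\gamma,\tau_0,r-2+\gamma}^2 + \|(1+y)f^e\|_{\gamma,\tau_0,r-2+\gamma}^2\bigr) < \infty$, this being finite since $f^e$ is an explicit expression in $U^E$ and $\partial_t U^E$ (see \eqref{def:fe}). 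For $\beta > \beta_*$, $T \le T_*$ with $\tau(T) \ge \tau_1$, and $\tnorm{u}^2 \le \tfrac{2M_\init}{\beta}$, \cref{cor:apriori} contributes three terms: the initial-data bracket, which is $\le \tfrac{\mathbf{C}}{\beta}\bigl(\|u_\init\|_{\gamma,\tau_0,r+\gamma-\frac{3}{2}}^2 + \|(1+y)\omega_\init\|_{\gamma,\tau_0,r+\frac{1}{2}-\gamma}^2\bigr) \le \tfrac{M_\init}{2\beta}$ by the definition of $M_\init$ and $\beta \ge 1$; the forcing bracket, which is $\le \tfrac{\mathbf{C}}{\beta}T_*K \le \tfrac{M_\init}{2\beta}$ by the choice of $T_*$; and the quadratic term $\tfrac{\mathbf{C}}{\tau_1^4}\bigl(\tfrac{1}{\beta}|U^E|_{\gamma,\tau,R}^2 + \tnorm{u}^2\bigr)\tnorm{u}^2$, which, using $\tnorm{u}^2 \le \tfrac{2M_\init}{\beta}$, $|U^E|_{\gamma,\tau(t),R} \le |U^E|_{\gamma,\tau_0,R}$ and $\beta \ge 1$, is $\le \tfrac{2\mathbf{C}M_\init}{\tau_1^4\beta^2}\bigl(\sup_{[0,T_0]}|U^E|_{\gamma,\tau_0,R}^2 + 2M_\init\bigr) \le \tfrac{M_\init}{2\beta}$ by the choice of $\beta_*$. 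Adding the three bounds gives $\tnorm{u}^2 \le \tfrac{3M_\init}{2\beta}$, as required.

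The main obstacle is the first step: because \cref{lemma:low} carries the superlinear term $\|\omega^P\|_{\mathcal{H}^s}^s$ — cubic in $X$ when $s=6$ — the $\mathcal{H}^s$-control of $\omega^P$, hence of $\|(U^P,V^P)\|_{low}$, only persists on a time interval whose length deteriorates with the size of the initial vorticity; this is exactly why $T_*$ (and through $L_*$ also $\beta_*$) is allowed to depend on $\|\omega^P\vert_{t=0}\|_{\mathcal{H}^s}$. A secondary technical point is checking in the second step that \emph{every} component of $\|(U^P,V^P)\|_{low}$, in particular those carrying weights growing in $y$ and those involving $V^P$, is genuinely dominated by $\|\omega^P\|_{\mathcal{H}^s}$ once $m \ge s+2$.
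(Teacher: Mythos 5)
Your proposal follows the paper's proof essentially step for step: integrate the differential inequality of \cref{lemma:low} to get short-time $\mathcal{H}^s$-control of $\omega^P$, convert this (via Sobolev embedding and \cref{thm:poincare-y-rho}) into a bound on $\|(U^P,V^P)\|_{low}$ that makes \cref{cor:apriori} applicable, and then pick $\beta_*$ and $T_*$ so that each of the initial-data, forcing and quadratic contributions is at most $\tfrac{M_\init}{2\beta}$. The only (immaterial) deviation is bookkeeping on the quadratic term — you bound it directly using $\tnorm{u}^2 \le 2M_\init/\beta$, while the paper absorbs it onto the left-hand side before shrinking $T_*$ — and the fact that you bound the $L^\infty$ components of the low norm through $\|\omega^P\|_{\mathcal{H}^s}$ alone, whereas the paper also routes some of them through $\tnorm{u}$; both give the same dependence of $\beta_*$, $T_*$.
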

  \begin{proof}
    Let $\beta, T$ such that
    $\tnorm{u}^2 \le \frac{2M_\init}{\beta} \le 2 M_\init$ (assuming
    $\beta \ge 1$).  We first apply \cref{lemma:low}, which yields
    \begin{equation}
      \begin{aligned}
        \frac{\dd}{\dd t}\| \omega^P \|^2_{\mathcal{H}^s} + \| \pa_y \omega^P \|^2_{\mathcal{H}^s} &  \le C \| \omega^P \|_{\mathcal{H}^s}^s + C (1+ \|U^E \|_{H^{s+1}(\T)} + \sqrt{2M_\init}) \| \omega^P \|_{\mathcal{H}^s}^2 \\
        & + \sum_{l=0}^{\frac{s}{2}} \|\pa_t^l (\pa_t + U^E \pa_x U^E) \|^2_{H^{s-2l}}.
      \end{aligned}
    \end{equation}
    Integrating this differential inequality shows
    \begin{equation} \label{eq:omegaP}
      \sup_{t \in [0,T]} \|\omega^P(t) \|_{\mathcal{H}^s} \le 2 \|\omega^P\vert_{t=0} \|_{\mathcal{H}^s}
    \end{equation}
    for $T \le T_1$, where $T_1$ depends on $M_\init$,
    $\sup_{t \in [0,T_0]} \|U^e(t) \|_{H^{s+1}(\T)}$,
    $\int_0^{T_0} \|\pa_t^l (\pa_t + U^E \pa_x U^E) \|^2_{H^{s-2l}}\, \dd t$ and
    on $\|\omega^P\vert_{t=0} \|_{\mathcal{H}^s}$.

    Standard Sobolev imbeddings imply that
    $$    \max_{0\le k \le 3}\| \pa_x^k U^P \|_{\infty} + \max_{0\le k \le 2}\left\| \frac{\pa_x^k V^P}{1+y} \right\|_{\infty}  \le C \left(   \max_{0\le k \le 3} \|\pa_x^k U^E\|_{\infty} + \tnorm{u}\right). $$
    As regards the other terms defining $\|(U^P,V^P)\|_{low}$, {\it cf} \eqref{def:low}, they all involve $\omega^P$ and are controlled by $\|\omega^P\|_{\mathcal{H}^s}$ as soon as  $s \ge 5$. Hence, it follows from \eqref{eq:omegaP} that
    $$ \|(U^P,V^P)\|_{low} \le K $$
    for $T \le T_1$ and for some $K$ depending on $M_\init$, $\|\omega^P\vert_{t=0} \|_{\mathcal{H}^s}$ and various norms of $U^E$. If we now choose
    $$ \beta_* \ge  \mathcal{C} (1+K)\,   (1 + \frac{1}{\tau_1} +K),
    \quad  \text{ and }   \tau(T) \ge \tau_1
    $$
    where $\mathcal{C}$ is the constant appearing in   \cref{cor:apriori}, we obtain for $\beta \ge \beta_*$:
    $$ \tnorm{u}^2 \le \frac{M_\init}{2\beta}  +   \frac{\mathbf{C}}{\beta^2} \int_0^T \| (1+y) f^e_j \|_{\gamma,\tau,r-2+\gamma}^2 \,\dd t
    + \frac{\mathbf{C}}{\beta \tau_1^4}   \left(|U^E|_{\gamma,\tau,R}^2 + 2M_\init \right) \tnorm{u}^2. $$
    Taking  $\beta_*$ large enough so that
    $$  \frac{\mathbf{C}}{\beta_* \tau_1^4}  \sup_{t \in [0,T_0]} \left(|U^E|_{\gamma,\tau_0,R}^2 + 2M_\init \right) \le \frac{1}{2}$$
    we get
    $$  \tnorm{u}^2 \le M_\init +  \frac{2 \mathbf{C}}{\beta^2} \int_0^T
    \| (1+y) f^e_j \|_{\gamma,\tau,r-2+\gamma}^2 \, \dd t
    +  \frac{2 \mathbf{C}}{\beta^2} \int_0^T
    \| f^e_j|_{y=0} \|_{\gamma,\tau,r-2+\gamma}^2 \, \dd t
    $$
    If we take $T_* \le T_1$ such that $2 \mathbf{C} \int_0^{T_*} \| (1+y)
    f^e_j \|_{\gamma,\tau,r-2+\gamma}^2\, \dd t \le \frac{1}{2} M_\init$,
    the result follows.
  \end{proof}

  \section{Existence and uniqueness} \label{sec:exist_unique}
  On the basis of the previous  {\it a priori} estimates, we now complete the proof of \cref{thm:main}: we construct a unique solution of \eqref{eq:prandtl}-\eqref{eq:BC} with data $U^P_\init$. This obviously amounts to constructing a unique solution of \eqref{eq:u}-\eqref{eq:BC:bis} with data $u_\init := U^P_\init - U^e\vert_{t=0}$.

  We fix $s=6$, $\gamma=2$.  We take $m \ge s+2$ and $\displaystyle \alpha \ge \frac{1}{2}$ that satisfy  the inequalities  in the first line of \eqref{fix_parameters}. Let $0 < \tau_1 < \tau_0$, $r \in \R$, $T_0 > 0$, and $\displaystyle U^E$, $U^P_\init = u_\init + U^e\vert_{t=0}$ satisfying the assumptions of the theorem. Let now $(\tau'_0, \tau'_1)$ with
  $\displaystyle 0 < \tau_1 < \tau'_1 < \tau'_0 < \tau_0$. Let $r'$ and $R'$ as in  the second line of \eqref{fix_parameters}. As $\tau_0 > \tau'_0$, we have
  \begin{equation*}
    \begin{aligned}
      & \| u_{\init} \|_{\gamma,\tau'_0,r'+\gamma-\frac{3}{2}}^2 +  \| (1+y) \omega_{\init} \|_{\gamma,\tau'_0,r+\frac 12 - \gamma}^2
      \le  C \left( \| u_{\init} \|_{\gamma,\tau_0,r}^2 +  \| (1+y) \omega_{\init} \|_{\gamma,\tau_0,r}^2 \right) < +\infty
    \end{aligned}
  \end{equation*}
  while
  $$  \|\omega^P\vert_{t=0}\|_{\mathcal{H}^s}^2 \le C (  \sup_{[0,T_0]}
  |U^E|_{2,\tau_0,r} + \| (1+y)^{m+6} \omega_\init \|_{H^6(\T \times \R_+)}) < +\infty $$
  and
  \begin{equation*}
    \sup_{[0,T_0]} |U^E|_{2,\tau'_0,R'} \le C  \sup_{[0,T_0]} |U^E|_{2,\tau_0,r} < +\infty
  \end{equation*}
  for a constant $C$ possibly depending on $\tau_0, \tau_0', r, r', R'$.

  The idea is then to apply \Cref{thm:complete-apriori} to a solution of
  an approximate system, for which well-posedness is granted.  Inspired
  by \cite{masmoudi-wong-2015-local-in-time-prandtl}, we consider the
  regularized equation
  \begin{equation} \label{eq:u-reg}
    \partial_t u + (u \partial_x + v \partial_y) u   + (U^e_\epsilon \partial_x + V^e_\epsilon \partial_y) u
    + (u \partial_x  + v \partial_y) U^e_\eps - \epsilon \partial_x^2 u - \partial^2_y u = f^e_\eps,
  \end{equation}
  adding a tangential diffusion $- \epsilon \partial_x^2 u$. The modified vector field $(U^e_\epsilon, V^e_\epsilon)$ takes the form
  $$ U^e_\epsilon =  \pa_y (e^{-\eps y} (y + e^{-y} -1))  U^E_\eps, \quad V^e_\epsilon = - e^{-\eps y} (y + e^{-y} -1) \pa_x U^E_\eps $$
  where  $U^E_\eps$ is an analytic approximation of $U^E$, converging to $U^E$ in the norm $| \ |_{2,\tau_0, r}$ as $\eps \rightarrow 0$.
  Note that $(U^e_\eps, V^e_\eps)$ is still divergence-free, but has now fast decay  in $y$, so that all difficulties generated by the linear growth of $V^e$ vanish.  Accordingly,  the right-hand side  $f^e$ is modified into $f^e_\eps$ replacing $U^E$ by $U^E_\eps$, resp.  $(U^e, V^e)$ by $(U^e_\eps, V^e_\eps)$ in \eqref{def:fe}. Similarly, one regularizes the initial data to obtain some $u_{\init,\eps}$ real analytic in $x,y$, with fast decay at  infinity in $y$ (and obeying suitable compatibility conditions).

  One can show that system \eqref{eq:u-reg} is well-posed following classical methods for fully parabolic equations. For instance, for $T_{\eps,max}$ small enough,  one can prove the existence of a Sobolev solution $u_\eps$ on $(0, T_{\eps,max})$ through a fixed point argument applied to
  $$ \mathcal{T}_\eps u(t) = e^{t (\eps \pa_x^2 + \pa^2_y)} u_{\init,\eps} + \int_0^t e^{(t-s) (\eps \pa_x^2 + \pa^2_y)} F_\eps[u](s) ds $$
  with $F_\eps[u] = f^e_\eps  - (u \partial_x + v \partial_y) u   - (U^e_\epsilon \partial_x + V^e_\epsilon \partial_y) u
  - (u \partial_x  + v \partial_y) U^e_\eps$.
  Moreover, $u^\eps$ remains (real) analytic in $(x,y)$ {\em as long as the Sobolev norm of $u_\eps$ does not blow up}, that is on $(0,T_{\eps,max})$. This property, related to the analytic regularization of the heat kernel is well-known, even in the more difficult context of the Navier-Stokes equation: see \cite{FoiTem,Lombardo,KuVi2} and references therein.

  We now claim that all {\it a priori estimates} obtained for a solution $u$ of \eqref{eq:u} can be established for $u_\eps$ solution of \eqref{eq:u-reg}, uniformly in $\eps$. For this, one just needs to adapt the definitions of the auxiliary quantities $H_j$ and $\phi_j$:  we rather consider
  \begin{equation}
    \label{eq:def-h-j-reg}
    \begin{lgathered}
      \Big(\partial_t + \beta \jp + U^P \partial_x + \jp \partial_x U^p
      + V^P \partial_y - \epsilon \partial_x^2 - \partial_y^2\Big)
      \int_0^y H_j \, \dd z
      = \int_0^y u_j \, \dd z, \\
      H_j|_{t=0} = 0, \qquad
      \partial_y H_j|_{y=0} = 0, \qquad
      H_j|_{y\to \infty} = 0.
    \end{lgathered}
  \end{equation}
  and
  \begin{equation}
    \label{eq:def-phi-j-reg}
    \begin{lgathered}
      \left(-\partial_t + \beta \jp - U^P \partial_x + j \partial_x U^p
        - V^P \partial_y - \partial_y V^P - V^P \frac{\partial_y
          \rho_j}{\rho_j}
        - \epsilon \partial_x^2
        - \left(\partial_y + \frac{\partial_y \rho_j}{\rho_j}\right)^2 \right)
      \phi_j = H_j, \\
      \phi_j|_{t=T} = 0, \qquad
      \phi_j|_{y=0} = 0, \qquad
      \phi_j|_{y\to \infty} = 0.
    \end{lgathered}
  \end{equation}
  The additional good terms coming from $-\eps\pa^2_x$ allow to control the extra commutator terms that it generates. Hence, we can apply \cref{thm:complete-apriori} with $\tau'_0, \tau'_1$, $r'$ and $R'$ instead of $\tau_0, \tau_1, r$, and $R$. Let $\beta_*$ and $T_*$ given by the proposition (note that they are independent of $\eps$). We then introduce
  $$ T_{\eps, *} = \sup \{ T \le T_{\eps, max}, \tnorm{u}^2  \le 2 M_\init/\beta \} $$
  where $\beta > \beta_*$ is fixed, and $\tnorm{u}$  is defined in \eqref{def:tnorm}. Note that $\tnorm{u}$ implicitly depends on $T$.  By continuity in time of $u^\eps$, one has $T_{\eps, *} > 0$. But from \cref{thm:complete-apriori}, one deduces easily that for any $T \le T_*$,
  $\: T_{\eps, max} \ge T_{\eps, *} \ge T$.

  From there, by standard compactness arguments, one obtains a solution to the Prandtl system over $[0,T]$, with the regularity properties stated in the theorem.
  It remains to show uniqueness. For this, we take two solutions $u^1$ and $u^2$ up to time $T$. The difference $u^d$ then satisfies (from
  \eqref{eq:u})
  \begin{equation*}
    \partial_t u^d + u^1 \partial_x u^d + u^d \partial_x u^2
    + v^d \partial_y u^1 + v^2 \partial_y u^d
    + (U^e \partial_x + V^e \partial_y) u^d
    + (u^d \partial_x  + v^d \partial_y) U^e - \partial^2_y u^d = f^{d,e}.
  \end{equation*}
  We then find for $u^d_j$ that
  \begin{equation*}
    \Big(\partial_t + \beta \jp + U^{1,P} \partial_x + \jp \partial_x U^{1,P}
    + V^{2,P} \partial_y - \partial_y^2\Big) u^d_j
    + \partial_y U^{1,P} v^d_j + j \partial_{xy} U^{1,P} \partial_x^{-1} v^d_j
    = F^d_j + \partial_x u^d u_j^d,
  \end{equation*}
  where again $F^d_j$ consists of $f_j^{d,e}$ and mixed terms with less
  than $j$ derivatives on $u^1$, $u^2$ or $u^d$. Comparing with \eqref{eq:u-j}, we see that the only difference is the replacement of $(U^P, V^P)$ by $(U^{1,P}, V^{2,P}$). Let us stress that the  latter field not being divergence-free is not an issue: none of the {\it a priori} estimates carried in \cref{sec:linear} and \cref{sec:nonlinear}  were using the fact that $(U^P, V^P)$ was divergence-free.  One can therefore obtain a similar Gevrey bound on $u^d$, under a lower bound on $\beta$ (involving the low norms of $(U^{1,P}, V^{1,P})$ and $(U^{2,P}, V^{2,P})$). This provides a stability estimate which shows uniqueness.
  These considerations now finish the proof of our main result
  \cref{thm:main}.

  \section*{Acknowledgements}
The authors thank Weiren Zhao for useful remarks on the first version of this manuscript. 
  They acknowledge the support of the Universit\'e Sorbonne Paris
  Cit\'e, through the funding ``Investissements d'Avenir'', convention
  ANR-11-IDEX-0005. H.D. is grateful to the People Programme (Marie
  Curie Actions) of the European Union’s Seventh Framework Programme
  (FP7/2007-2013) under REA grant agreement n. PCOFUND-GA-2013-609102,
  through the PRESTIGE programme coordinated by Campus France.
  D.G.-V. acknowledges the support of the Institut Universitaire de
  France.

  \bibliographystyle{abbrv}
  \bibliography{lit}

\begin{thebibliography}{10}

\bibitem{AlWaXuTa}
R.~Alexandre, Y.-G. Wang, C.-J. Xu, and T.~Yang.
\newblock Well-posedness of the {P}randtl equation in {S}obolev spaces.
\newblock {\em J. Amer. Math. Soc.}, 28(3):745--784, 2015.

\bibitem{KuVi2}
G.~Camliyurt, I.~Kukavica, and V.~Vicol.
\newblock Gevrey regularity for the {N}avier--{S}tokes in a half-space.
\newblock {\em J. Differential Equations}, 265(9):4052--4075, 2018.

\bibitem{dalibard-dietert-gerard-marbach-2018-ibl}
A.-L. Dalibard, H.~Dietert, D.~Gérard-Varet, and F.~Marbach.
\newblock High frequency analysis of the unsteady interactive boundary layer
  model.
\newblock {\em SIAM Journal on Mathematical Analysis}, 50(4):4203–4245, Jan
  2018.

\bibitem{DalMas}
A.-L. Dalibard and N.~Masmoudi.
\newblock Ph\'enom\`ene de s\'eparation pour l'\'equation de {P}randtl
  stationnaire.
\newblock In {\em S\'eminaire {L}aurent {S}chwartz---\'Equations aux
  d\'eriv\'ees partielles et applications. {A}nn\'ee 2014--2015}, pages Exp.
  No. IX, 18. Ed. \'Ec. Polytech., Palaiseau, 2016.

\bibitem{DrazinReid}
P.~Drazin and W.~Reid.
\newblock {\em Hydrodynamic stability}.
\newblock Cambridge Mathematical Library. Cambridge University Press,
  Cambridge, second edition, 2004.
\newblock With a foreword by John Miles.

\bibitem{weinan}
W.~E.
\newblock Boundary layer theory and the zero-viscosity limit of the
  {N}avier-{S}tokes equation.
\newblock {\em Acta Math. Sin. (Engl. Ser.)}, 16(2):207--218, 2000.

\bibitem{FoiTem}
C.~Foias and R.~Temam.
\newblock Gevrey class regularity for the solutions of the {N}avier-{S}tokes
  equations.
\newblock {\em J. Funct. Anal.}, 87(2):359--369, 1989.

\bibitem{gargano1}
F.~Gargano, M.~C. Lombardo, M.~Sammartino, and V.~Sciacca.
\newblock Singularity formation and separation phenomena in boundary layer
  theory.
\newblock In {\em Partial differential equations and fluid mechanics}, volume
  364 of {\em London Math. Soc. Lecture Note Ser.}, pages 81--120. Cambridge
  Univ. Press, Cambridge, 2009.

\bibitem{gerard-varet-dormy-2009-ill-posedness}
D.~G\'{e}rard-Varet and E.~Dormy.
\newblock On the ill-posedness of the {P}randtl equation.
\newblock {\em J. Amer. Math. Soc.}, 23(2):591--609, 2010.

\bibitem{GerMae}
D.~Gerard-Varet and Y.~Maekawa.
\newblock {S}obolev stability of {P}randtl expansions for the steady
  {N}avier-{S}tokes equations.
\newblock Preprint arXiv, 2018.

\bibitem{GeMaMa}
D.~G\'{e}rard-Varet, Y.~Maekawa, and N.~Masmoudi.
\newblock Gevrey stability of {P}randtl expansions for {$2$}-dimensional
  {N}avier--{S}tokes flows.
\newblock {\em Duke Math. J.}, 167(13):2531--2631, 2018.

\bibitem{GeMa}
D.~Gerard-Varet and N.~Masmoudi.
\newblock Well-posedness for the {P}randtl system without analyticity or
  monotonicity.
\newblock {\em Ann. Sci. \'Ec. Norm. Sup\'er. (4)}, 48(6):1273--1325, 2015.

\bibitem{GerNg}
D.~G\'{e}rard-Varet and T.~Nguyen.
\newblock Remarks on the ill-posedness of the {P}randtl equation.
\newblock {\em Asymptot. Anal.}, 77(1-2):71--88, 2012.

\bibitem{Gre}
E.~Grenier.
\newblock On the nonlinear instability of {E}uler and {P}randtl equations.
\newblock {\em Comm. Pure Appl. Math.}, 53(9):1067--1091, 2000.

\bibitem{GGN2014}
E.~Grenier, Y.~Guo, and T.~T. Nguyen.
\newblock Spectral instability of characteristic boundary layer flows.
\newblock {\em Duke Math. J.}, 165:3085--3146, 2016.

\bibitem{GNg}
E.~Grenier and T.~T. Nguyen.
\newblock ${L}^\infty$ instability of {P}randtl layers.
\newblock Preprint arXiv, 2018.

\bibitem{GuoIye}
Y.~Guo and S.~Iyer.
\newblock Validity of steady {P}randtl expansions.
\newblock Preprint arXiv, 2018.

\bibitem{GuoNg2}
Y.~Guo and T.~Nguyen.
\newblock A note on {P}randtl boundary layers.
\newblock {\em Comm. Pure Appl. Math.}, 64(10):1416--1438, 2011.

\bibitem{GuoNg}
Y.~Guo and T.~T. Nguyen.
\newblock Prandtl boundary layer expansions of steady {N}avier-{S}tokes flows
  over a moving plate.
\newblock {\em Ann. PDE}, 3(1):Art. 10, 58, 2017.

\bibitem{HH03}
L.~Hong and J.~K. Hunter.
\newblock Singularity formation and instability in the unsteady inviscid and
  viscous {P}randtl equations.
\newblock {\em Commun. Math. Sci.}, 1(2):293--316, 2003.

\bibitem{KuMaViWo}
I.~Kukavica, N.~Masmoudi, V.~Vicol, and T.~K. Wong.
\newblock On the local well-posedness of the {P}randtl and hydrostatic {E}uler
  equations with multiple monotonicity regions.
\newblock {\em SIAM J. Math. Anal.}, 46(6):3865--3890, 2014.

\bibitem{KuVi}
I.~Kukavica and V.~Vicol.
\newblock On the local existence of analytic solutions to the {P}randtl
  boundary layer equations.
\newblock {\em Commun. Math. Sci.}, 11(1):269--292, 2013.

\bibitem{LiYang}
W.-X. Li and T.~Yang.
\newblock Well-posedness in {G}evrey space for the {P}randtl equations with
  non-degenerate critical points.
\newblock To appear in J. Eur. Math. Soc., 2017.

\bibitem{LiuYang}
C.-J. Liu and T.~Yang.
\newblock Ill-posedness of the {P}randtl equations in {S}obolev spaces around a
  shear flow with general decay.
\newblock {\em J. Math. Pures Appl. (9)}, 108(2):150--162, 2017.

\bibitem{Lombardo}
M.~C. Lombardo.
\newblock Analytic solutions of the {N}avier-{S}tokes equations.
\newblock {\em Rend. Circ. Mat. Palermo (2)}, 50(2):299--311, 2001.

\bibitem{MaeMaz}
Y.~Maekawa and A.~Mazzucato.
\newblock The inviscid limit and boundary layers for {N}avier-{S}tokes flows.
\newblock In {\em Handbook of Mathematical Analysis in Mechanics of Viscous
  Fluids}, pages 1--48. Springer, 2017.

\bibitem{masmoudi-wong-2015-local-in-time-prandtl}
N.~Masmoudi and T.~K. Wong.
\newblock Local-in-time existence and uniqueness of solutions to the prandtl
  equations by energy methods.
\newblock {\em Communications on Pure and Applied Mathematics},
  68(10):1683–1741, Jul 2015.

\bibitem{Metivier}
G.~M\'{e}tivier.
\newblock {\em Small viscosity and boundary layer methods}.
\newblock Modeling and Simulation in Science, Engineering and Technology.
  Birkh\"{a}user Boston, Inc. Boston, MA, 2004.
\newblock Theory, stability analysis, and applications.

\bibitem{Ole}
O.~A. Oleinik and V.~N. Samokhin.
\newblock {\em Mathematical models in boundary layer theory}, volume~15 of {\em
  Applied Mathematics and Mathematical Computation}.
\newblock Chapman \& Hall/CRC, Boca Raton, FL, 1999.

\bibitem{sammartino-calfisch-1998-boundary-layer-analytic}
M.~Sammartino and R.~E. Caflisch.
\newblock Zero viscosity limit for analytic solutions, of the {N}avier-{S}tokes
  equation on a half-space. {I}. {E}xistence for {E}uler and {P}randtl
  equations.
\newblock {\em Comm. Math. Phys.}, 192(2):433--461, 1998.

\bibitem{SaCa2}
M.~Sammartino and R.~E. Caflisch.
\newblock Zero viscosity limit for analytic solutions of the {N}avier-{S}tokes
  equation on a half-space. {II}. {C}onstruction of the {N}avier-{S}tokes
  solution.
\newblock {\em Comm. Math. Phys.}, 192(2):463--491, 1998.

\bibitem{WangZhang}
C.~Wang, Y.~Wang, and Z.~Zhang.
\newblock Zero-viscosity limit of the {N}avier-{S}tokes equations in the
  analytic setting.
\newblock {\em Arch. Ration. Mech. Anal.}, 224(2):555--595, 2017.

\bibitem{XuZhang}
C.-J. Xu and X.~Zhang.
\newblock Long time well-posedness of {P}randtl equations in {S}obolev space.
\newblock {\em J. Differential Equations}, 263(12):8749--8803, 2017.

\end{thebibliography}
\end{document}